\newtheorem{thm}{Theorem}[section]
\newtheorem{prop}[thm]{Proposition}
\newtheorem{lem}[thm]{Lemma}
\newtheorem{cor}[thm]{Corollary}
\theoremstyle{definition}
\newtheorem{defn}{Definition}[section]
\newtheorem{exe}{Example}[section]
\newtheorem{question}{Question}[section]
\theoremstyle{remark}
\newtheorem{rem}{Remark}[section]
\begin{document}
\title[Invariant probability measures from pseudoholomorphic curves I]{Invariant probability measures from pseudoholomorphic curves I}

\author{Rohil Prasad}
\address{Department of Mathematics\\Princeton
University\\Princeton, NJ 08544}
\thanks{This material is based upon work supported by the National Science Foundation Graduate Research Fellowship Program under Grant No. DGE-1656466.}
\email{rrprasad@math.princeton.edu}

\begin{abstract}
We introduce a method for constructing invariant probability measures of a large class of non-singular volume-preserving flows on closed, oriented odd-dimensional smooth manifolds using pseudoholomorphic curve techniques from symplectic geometry. These flows include any non-singular volume preserving flow in dimension three, and autonomous Hamiltonian flows on closed, regular energy levels in symplectic manifolds of any dimension. As an application, we use our method to prove the existence of obstructions to unique ergodicity for this class of flows, generalizing results of Taubes and Ginzburg-Niche.
\end{abstract}

\maketitle

\tableofcontents

\section{Introduction}

This paper is concerned with the construction of invariant probability measures for the autonomous flow associated to a \emph{framed Hamiltonian structure} $\eta = (\lambda, \omega)$ on a closed, oriented smooth manifold $M$ of dimension $2n + 1$ for $n \geq 1$. 

\begin{defn} \label{defn:framedHamiltonianStructures}
A \textbf{framed Hamiltonian structure} on $M$ is the datum of a $1$-form $\lambda$ and a two-form $\omega$ such that:
\begin{enumerate}
    \item $\lambda \wedge \omega^n > 0$.
    \item $d\omega = 0$.
\end{enumerate}

We call a framed Hamiltonian structure \emph{exact} if $\omega$ is exact. The datum of $M$ along with a choice of framed Hamiltonian structure $\eta = (\lambda, \omega)$ is called a \textbf{framed Hamiltonian manifold}.
\end{defn}

A framed Hamiltonian manifold $M$ with framed Hamiltonian structure $\eta = (\lambda, \omega)$ is denoted by the tuple $(M, \eta)$. The autonomous flow in question is the flow of the \emph{Hamiltonian vector field} $X$ associated to $\eta$. 

\begin{defn} \label{defn:hamiltonianVectorField}
The \textbf{Hamiltonian vector field} associated to a framed Hamiltonian structure $(\lambda, \omega)$ is the unique vector field $X$ satisfying
$$\lambda(X) \equiv 1$$
and
$$\omega(X, -) \equiv 0.$$
\end{defn}

We present three broad examples of natural dynamical situations which can be modeled by a framed Hamiltonian structure and its associated flow. 

\begin{exe} \label{exe:divergenceFreeVectorFields}
Let $(M^3, g)$ be a closed, Riemannian three-manifold with volume form $\text{dvol}_g$. Let $X$ be any non-singular, volume-preserving vector field. Then we may define a framed Hamiltonian structure by taking $\lambda$ to be the dual of $X$ with respect to the Riemannian metric $g$, and $\omega$ to be the contraction $\text{dvol}_g(X, -)$. 

Note that $\lambda \wedge \omega = \text{dvol}_g$, and $\omega$ is closed by virtue of Cartan's formula and the fact that $X$ is volume-preserving. Furthermore, $X$ is the Hamiltonian vector field associated to $\eta = (\lambda, \omega)$. 
\end{exe}

\begin{exe} \label{exe:hypersurfaces}
Let $(W,\Theta)$ be a symplectic manifold of dimension $2n$. Let $M$ be a closed hypersurface in $W$, equal to $H^{-1}(0)$ for some smooth Hamiltonian function
$$H: W \to \mathbb{R}.$$

We assume that $0$ is a regular value of $H$, so $M$ is a smooth manifold. The Hamiltonian vector field $X_H$ associated to $H$ is the unique vector field on $W$ such that
$$\Theta(X_H, -) = -dH(-).$$

It is a quick exercise to show, using the Cartan formula for the Lie derivative, that the function $H$ is invariant under the flow of $X_H$, and that $X_H$ is tangent to $M$ at every point of $M$. 

Pick an almost-complex structure $J$ on $W$ \emph{compatible} with the symplectic form $\Theta$, meaning that 
$$\Theta(V, JV) > 0$$
for any nonzero tangent vector $V$ and
$$\Theta(J-, J-) = \Theta(-, -).$$

Let $\lambda$ be the one-form on $M$ with kernel $\xi = TM \cap J(TM)$. Set $\omega = \Theta|_M$. Then $\eta = (\lambda, \omega)$ is a framed Hamiltonian structure on $M$. The associated Hamiltonian vector field $X$ can be shown to coincide with the restriction of $X_H$ to $M$. 
\end{exe}

\begin{exe} \label{exe:contactManifolds}
Any contact manifold $M$ equipped with a choice of contact form $\lambda$ has a natural framed Hamiltonian structure, defined by $\eta = (\lambda, d\lambda)$. Furthermore, the Reeb vector field $R$ associated to $\lambda$ is equal to the framed Hamiltonian vector field $X$ associated to $\eta$. 
\end{exe}

\begin{rem}
A closed two-form $\omega$ such that there \emph{exists} a $\lambda$ as in Definition \ref{defn:framedHamiltonianStructures} is commonly known as an ``odd-symplectic structure''.

A framed Hamiltonian structure could also be seen as a more general version of a ``stable Hamiltonian structure'' as in \cite{BEHWZ03}. A stable Hamiltonian structure is the datum of a framed Hamiltonian structure, but with the additional assumption that
$$\text{ker}(\omega) \subseteq \text{ker}(d\lambda).$$
\end{rem}

Fix a framed Hamiltonian manifold $(M, \eta = (\lambda, \omega))$ of dimension $2n+1$ and let $X$ denote its associated Hamiltonian vector field. We make two observations regarding the problem of constructing invariant probability measures. 

First, it is a consequence of Cartan's formula that flow of $X$ preserves the natural volume form 
$$\text{dvol}_\eta = \lambda \wedge \omega^n.$$

Second, it is not difficult to find a probability measure invariant under the flow of $X$. Write $\phi_X^T$ for the time $T$ flow of $X$. Pick any point $p \in M$. Then we can pick a sequence $T_k \to \infty$ and define a sequence $\mu_k$ of probability measures by averaging over the time $T_k$ flow-lines starting at $p$, namely 
$$\int f d\mu_k = \frac{1}{T_k} \int_0^{T_k} f(\phi_X^t(p)) dt .$$

After passing to a subsequence, the measures $\mu_k$ converge weakly to an invariant probability measure $\mu$. 

However, it is difficult to tell without additional assumptions on the flow that this construction yields any ``interesting'' invariant measures. No matter the choices made in the above construction, it could be the case that $\mu$ is merely the normalized volume density 
$$(\int_M \lambda \wedge \omega^n)^{-1} \text{dvol}_\eta.$$

Following this line of thought to the most extreme case, if the \emph{only} invariant probability measure is the normalized volume density above then the flow is called ``uniquely ergodic''. In this context, it is useful to keep in mind the following theorem, proved by Ginzburg-Niche in \cite{GinzburgNiche2015} using McDuff's contact type criterion \cite{McDuff87}. 

\begin{thm} \cite[Theorem $1.1$]{GinzburgNiche2015}
Let $(M, \eta = (\lambda, \omega))$ be a closed, oriented framed Hamiltonian manifold of dimension $2n+1$ for $n \geq 1$. If $\omega$ is exact, the ``self-linking number'', defined as 
$$\int_M \nu \wedge \omega^n$$
for any choice of primitive $\nu$ of $\omega$ is nonzero, and the Hamiltonian flow is uniquely ergodic, then the framed Hamiltonian structure is ``contact type''. That is, there is a framed Hamiltonian structure $\eta' = (\lambda', \omega)$ such that $\lambda'$ is a contact form and $\omega = d\lambda'$. 
\end{thm}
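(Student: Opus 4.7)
The plan is to apply McDuff's contact type criterion from \cite{McDuff87}. For exact framed Hamiltonian structures $\eta = (\lambda, \omega)$ with $\omega = d\nu$, this criterion characterizes contact type via a positivity condition on a natural pairing with invariant measures of the Hamiltonian flow. Concretely, for each $X$-invariant Borel probability measure $\mu$, define the action
$$A(\mu) := \int_M \nu(X)\, d\mu.$$
The criterion, as deployed by Ginzburg-Niche, asserts (roughly) that $\eta$ is of contact type if and only if $A(\mu) \neq 0$ with consistent sign across all invariant $\mu$. The mechanism is that a contact primitive $\lambda'$ of $\omega$ has the form $\lambda' = \nu + \alpha$ for some closed $1$-form $\alpha$, and the condition $\lambda'(X) > 0$ pointwise is solvable for $\alpha$ precisely under the positivity of $A$, by a Hahn-Banach / Schwartzman asymptotic cycle duality argument on the cone of positive functions modulo $X$-invariant combinations of closed forms.

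Under the unique ergodicity hypothesis, the set of $X$-invariant probability measures consists of a single point: the normalized volume $\mu_0 = c^{-1}\,\lambda \wedge \omega^n$ with $c := \int_M \lambda \wedge \omega^n > 0$. So the criterion collapses to the single check that $A(\mu_0) \neq 0$. The key computation is the pointwise identity
$$\nu \wedge \omega^n = \nu(X)\, \lambda \wedge \omega^n \quad \text{on } M,$$
which follows by expanding the wedge product on a local frame $(X, e_1, \ldots, e_{2n})$ and using $\lambda(X) = 1$ together with $\iota_X \omega = 0$. Integrating yields
$$A(\mu_0) = \frac{1}{c}\int_M \nu \wedge \omega^n,$$
a nonzero multiple of the self-linking number, hence nonzero by hypothesis. (Note that $\int_M \alpha \wedge \omega^n = 0$ for any closed $\alpha$, since $\omega$ exact implies $\omega^n = d(\nu \wedge \omega^{n-1})$ is exact and $\alpha$ is closed, so the self-linking number is independent of the chosen primitive; this ensures the computation is intrinsic.)

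Applying McDuff's criterion then produces a primitive $\lambda'$ of $\omega$ with $\lambda'(X) > 0$ pointwise, equivalently $\lambda' \wedge \omega^n > 0$, yielding the desired contact-type framed Hamiltonian structure $\eta' = (\lambda', \omega)$. The principal obstacle is the setup and verification of McDuff's criterion in the exact framed Hamiltonian setting; this is the technical heart of \cite{GinzburgNiche2015}, requiring the Hahn-Banach/Schwartzman duality mentioned above, together with a careful handling of the sign of $A(\mu_0)$ via the orientation of $M$ (so that the positive sign of the self-linking number matches the sign convention of contact type). Once the criterion is in place, unique ergodicity reduces the check to a single measure and the wedge-product computation above completes the proof.
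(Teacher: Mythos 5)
This theorem is not proved in the paper at all --- it is quoted from \cite{GinzburgNiche2015}, and the paper's only indication of method is that it is ``proved~\ldots using McDuff's contact type criterion \cite{McDuff87}''; your proposal reconstructs exactly that argument (reduce to the single invariant measure $\mu_0 = c^{-1}\lambda\wedge\omega^n$ via unique ergodicity, compute $A(\mu_0)$ through the correct pointwise identity $\nu\wedge\omega^n = \nu(X)\,\lambda\wedge\omega^n$, and invoke the criterion), so it is correct and follows the same route. The only caveat, which is immaterial here, is that McDuff's criterion is properly stated for \emph{null-homologous} structure cycles rather than all invariant measures; since $\omega$ is exact the asymptotic cycle of $\mu_0$ is Poincar\'e dual to $[\omega^n]=0$, so your reduction is valid.
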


Observe that replacing $\lambda$ in a framed Hamiltonian structure with another one-form $\lambda'$ merely multiplies the Hamiltonian vector field by a positive smooth function, so the flows coincide up to a time change. 

The above theorem, given a mild topological assumption, reduces the study of uniquely ergodic framed Hamiltonian flows to Reeb flows (see Example \ref{exe:contactManifolds}). Observe that in the case where $M$ is a closed regular energy level in $\mathbb{R}^{2n}$ equipped with the standard symplectic form (see Example \ref{exe:hypersurfaces}), the self-linking number is equal to the symplectic volume of the compact region bounded by $M$, so it is always nonzero. 

Furthermore, in dimension three, Taubes' proof of the Weinstein conjecture \cite{TaubesWeinstein} shows that a Reeb flow always has a periodic orbit. This implies that Hamiltonian flows in dimension three with $\omega$ exact and nonzero self-linking number cannot be uniquely ergodic. If the Weinstein conjecture were true for all higher-dimensional contact manifolds, then the same argument would show that any framed Hamiltonian flow in any dimension with $\omega$ exact and nonzero self-linking number cannot be uniquely ergodic.

It was also shown by Taubes in \cite{TaubesVectorFields} using Seiberg-Witten theory, in a manner similar to his proof of the Weinstein conjecture in dimension $3$, that any framed Hamiltonian flow on a closed $3$-manifold with $\omega$ exact and nonzero self-linking number cannot be uniquely ergodic. 

The construction of interesting invariant probability measures also has potential applications beyond the question of unique ergodicity. One major application could be answering (in the negative) a question of Herman from the $1998$ International Congress of Mathematicians \cite{Herman1998}:

\begin{question}
\label{question:herman} When $n \geq 2$, can one find a $C^\infty$ compact, connected regular hypersurface in $\mathbb{R}^{2n}$ on which the characteristic flow is minimal?
\end{question}

Recall that a vector field on a manifold is ``minimal'' if and only if the only non-empty closed subset invariant under its flow is the entire manifold. Using the construction in Example \ref{exe:hypersurfaces}, one finds that Question \ref{question:herman} is equivalent to asking if the Hamiltonian vector field $X$ induced on any closed regular hypersurface $M \subset \mathbb{R}^{2n}$ by this construction has no non-empty closed invariant subsets other than the entire manifold $M$. 

Now recall that the support of an invariant probability measure is a non-empty closed invariant set. Therefore, if one can find, for any closed regular hypersurface $M \subset \mathbb{R}^{2n}$, an $X$-invariant probability measure that does not have support equal to the entire manifold $M$, then they have answered Herman's question in the negative. 

On the other hand, by combining our Theorem \ref{thm:invariantMeasures} and the existence theorem for pseudoholomorphic curves proved in the sequel \cite{sequel}, it is known that all such hypersurfaces admit explicit $X$-invariant probability measures that are not equal to the normalized volume measure. Any invariant probability measure has support equal to a non-empty, closed invariant subset. Therefore, if we can show that the invariant probability measures given by Theorem \ref{thm:invariantMeasures} do not have full support, we have answered Question \ref{question:herman} in the negative. 

Question \ref{question:herman} has recently been answered in the negative for closed regular hypersurfaces in $\mathbb{R}^4$ (the case where $n = 2$) in a groundbreaking new paper by Fish-Hofer \cite{FishHoferFeral}. Their proof uses a new class of pseudoholomorphic curves called ``feral'' pseudoholomorphic curves, which are similar but not identical to the pseudoholomorphic curves used in this work. 

We present in our main theorem, Theorem \ref{thm:invariantMeasures}, a construction of invariant probability measures of the Hamiltonian vector field $X$ using the theory of pseudoholomorphic curves from symplectic geometry. 

The utility of this construction is three-fold. First, we can leverage the geometric and topological properties of pseudoholomorphic curves to conclude, under mild assumptions that are topological in nature, that our measures are ``interesting''. That is, they are not equal to the normalized volume density, and depending on the situation they satisfy additional properties (see Theorems \ref{thm:mainNonExact}, \ref{thm:mainExact}, and \ref{thm:confoliations}). 

Second, our method is flexible enough to apply, in contrast to the theorems stated in \cite{GinzburgNiche2015} and \cite{TaubesVectorFields}, to the case where $\omega$ is \emph{not} exact. The Seiberg-Witten theory approach of \cite{TaubesVectorFields} could be attempted in the non-exact case, but even then it cannot be used in dimensions higher than three. 

Third, the usage of pseudoholomorphic curves connects the question of finding interesting invariant probability measures to the question of finding interesting moduli spaces of pseudoholomorphic curves, which can be investigated by appealing to rich symplectic invariants such as Gromov-Witten theory and symplectic field theory. See \cite{McduffSalamon12} and \cite{EGH00} respectively for overviews of these invariants. We make use of Gromov-Witten theory and Seiberg-Witten theory in the sequel \cite{sequel} to construct examples of pseudoholomorphic curves to which the results of this paper can be applied.

We also elaborate more on the potential application of this result to Herman's Question \ref{question:herman}. By combining our Theorem \ref{thm:invariantMeasures} and the existence theorem for pseudoholomorphic curves provided in \cite{sequel}, we can explicitly construct on any closed, regular hypersurface $M \subset \mathbb{R}^{2n}$ an $X$-invariant probability measure that is not equal to the normalized volume density. As mentioned in our prior discussion regarding Question \ref{question:herman}, if we can show that the invariant probability measures given by Theorem \ref{thm:invariantMeasures} do not have full support, we have answered Question \ref{question:herman} in the negative. 

\begin{rem}
We also take the opportunity to make an interesting technical remark regarding the types of pseudoholomorphic curves that we use in our construction of invariant measures. The pseudoholomorphic curves that we consider satisfy much less stringent topological and geometric assumptions than the types considered in the symplectic invariants referred to above. 

For example, it is possible that the images of the curves in the target do not have a uniform bound (in $r$) on their area over all balls of radius $r$ in the target. It is also possible that the domains of the curves have infinite genus, infinitely many non-compact ends, and infinitely many marked and nodal points. These latter three properties make our assumptions even weaker than those made of the ``feral'' pseudoholomorphic curves introduced in \cite{FishHoferFeral}. We have yet to encounter, however, an example where the domains of the curves do in fact have more complicated topology than feral pseudoholomorphic curves. 
\end{rem}

Before we state our main theorems in Section \ref{subsec:thmStatements}, we must first write down several definitions and notations in Sections \ref{subsec:jCurves} and \ref{subsec:adaptedCylinders}. We introduce in these sections the notion of a ``$\omega$-finite'' pseudoholomorphic curve in an adapted cylinder over a framed Hamiltonian manifold $M$, which is the main object of study in this work.

Before doing so, we present a schematic overview of the main results. Theorem \ref{thm:invariantMeasures} presents our construction of invariant measures from pseudoholomorphic curves. The statement is technical and requires everything from Sections \ref{subsec:jCurves} and \ref{subsec:adaptedCylinders} to understand fully. 

Theorems \ref{thm:mainNonExact} and \ref{thm:mainExact} present, under mild topological assumptions, some structural properties of the invariant measures provided by Theorem \ref{thm:invariantMeasures}. Both of these theorems imply that, under the aforementioned assumptions, the flow of the Hamiltonian vector field is not uniquely ergodic. 

Theorem \ref{thm:confoliations} gives finer information about the support of the invariant measures constructed in Theorem \ref{thm:invariantMeasures} if the flow is ``confoliation-type''. It shows that the support of these measures lie inside the zero set of a smooth, non-negative function. 

If the reader is willing to accept Theorem \ref{thm:invariantMeasures} as a black box, then they do not need to read Sections \ref{subsec:jCurves} and \ref{subsec:adaptedCylinders} to understand the statements of Theorems \ref{thm:mainNonExact}, \ref{thm:mainExact}, and \ref{thm:confoliations}. The \emph{proofs} of any of the results listed above, however, use definitions and notations from Sections \ref{subsec:jCurves} and \ref{subsec:adaptedCylinders} constantly. 

For the remainder of this paper, we assume that all manifolds defined are oriented.

\subsection{Pseudoholomorphic curves in almost-complex manifolds} \label{subsec:jCurves}

\begin{defn}\label{defn:markedNodalSurface}
A \textbf{nodal} Riemann surface is the datum 
$$(C, j, D)$$
which consists of the following data:
\begin{itemize}
    \item A (possibly disconnected) smooth surface $C$.
    \item An almost-complex structure $j$ on $C$.
    \item A set of pairs of \emph{nodal points} $D = \{(\overline{z}_1, \underline{z}_1), (\overline{z}_2, \underline{z}_2), \ldots \}$ such that the full collection of points in $D$ is discrete, closed, and no point of $D$ lies in the boundary $\partial C$.
\end{itemize}

A \textbf{marked nodal} Riemann surface is the datum 
$$(C, j, D, \mu)$$
where $(C, j, D)$ is a nodal Riemann surface and $\mu$ is a closed, discrete subset of $C \setminus (D \cup \partial C)$, called the set of \emph{marked points}. 
\end{defn}

\begin{rem}
We will often abuse notation and refer to the set $D$ of \emph{pairs} of nodal points as merely a \emph{set} of nodal points.  
\end{rem}

Recall for any connected \emph{compact} surface $C$, the genus of $C$, denoted by $\text{Genus}(C)$, is the genus of the closed surface obtained from $C$ by capping off all of the boundary components of $C$ with disks. For any disconnected surface $C$ with compact connected components, $\text{Genus}(C)$ takes values in the extended positive integers $\mathbb{Z}_{\geq 0} \cup \{\infty\}$. We do not restrict $C$ to have finitely many connected components. If infinitely many connected components of $C$ have positive genus, we set $\text{Genus}(C) = \infty$. If not, $\text{Genus}(C)$ is equal to the sum of the genera of its connected components. 

In the case where $C$ is not compact, the genus is defined using a compact exhaustion. 

\begin{defn} \label{defn:curveGenus}
The \textbf{genus} $\text{Genus}(C)$ of a connected, non-compact Riemann surface $C$ is defined as follows. Select a sequence of compact sub-surfaces $\{C_k\}_{k \geq 0}$ such that
$$C_k \subset \text{int}(C_{k+1})$$
for every $k$ and 
$$C = \cup_{k \geq 0} C_k.$$

Then set
$$\text{Genus}(C) = \lim_k \text{Genus}(C_k) \in \mathbb{Z}_{\geq 0} \cup \{\infty\}.$$

If $C$ is disconnected and non-compact, and any of its connected components $C'$ have $\text{Genus}(C') = \infty$, we set $\text{Genus}(C) = \infty$. Otherwise, we define $\text{Genus}(C)$ to be equal to $\infty$ if infinitely many connected components have positive genus, and equal to the sum of the genera of its connected components if only finitely many connected components have positive genus. 
\end{defn}

It is a straightforward exercise to show that the definition of $\text{Genus}(C)$ is independent of the choice of compact exhaustion $\{C_k\}$. 

Now we can define pseudoholomorphic curves.

\begin{defn} \label{defn:nodalCurve}
A \textbf{(marked nodal) pseudoholomorphic curve} or \textbf{(marked nodal) $J$-holomorphic curve} is the datum
$$\mathbf{u} = (u, C, j, W, J, D, \mu)$$
which consists of the following data:
\begin{itemize}
    \item A marked nodal Riemann surface $(C, j, D, \mu)$.
    \item An almost-complex manifold $(W, J)$.
    \item A smooth map
    $$u: C \to W$$
    satisfying the non-linear Cauchy-Riemann equation
    $$du + J \circ du \circ j = 0$$
    and the property that $u(\overline{z}) = u(\underline{z})$ for every pair $(\overline{z}, \underline{z}) \in D$.
\end{itemize}
\end{defn}

In symplectic geometry, one usually studies pseudoholomorphic curves
$$\mathbf{u} = (u, C, j, W, J, D, \mu)$$
in symplectic manifolds $(W, \Omega)$, where $J$ is either \emph{tame} or \emph{compatible} with respect to the symplectic form $\Omega$.

\begin{defn}
\label{defn:tamedAndCompatible} An almost-complex structure $J$ on a symplectic manifold $(W, \Omega)$ is \textbf{tame} if 
$$\Omega(V, JV) > 0$$
for any tangent vector $V$. It is \textbf{compatible} if it is tame and additionally
$$\Omega(JV_1, JV_2) = \Omega(V_1, V_2)$$
for any pair of tangent vectors $V_1$ and $V_2$. 
\end{defn}

It is also useful to write down the following definition. 

\begin{defn} \label{defn:connectedCurve}
A marked nodal pseudoholomorphic curve 
$$\mathbf{u} = (u, C, j, W, J, D, \mu)$$
is \textbf{connected} if the surface constructed from $C$ by performing connect sums at each pair of nodal points is connected.
\end{defn}

\subsection{Pseudoholomorphic curves in adapted cylinders} \label{subsec:adaptedCylinders}

Fix $M$ to be a closed, smooth manifold of dimension $2n+1$. Equip $M$ with a framed Hamiltonian structure $\eta = (\lambda, \omega)$ and let $X$ be the associated Hamiltonian vector field. 

Before proceeding, we write down a couple of basic objects associated to $(M, \eta)$. 

There is a natural codimension-one tangent distribution on $M$ given by
$$\xi = \text{ker}(\lambda)$$
and the pair $(\xi, \omega)$ forms the datum of a symplectic bundle. 

Write $\langle X \rangle$ for the one-dimensional subbundle of $TM$ defined by the span of the Hamiltonian vector field $X$. Then there is a splitting
$$TM = \langle X \rangle \oplus \xi.$$

We will define the projection map
$$\pi_\xi: TM \to \xi$$
to be the projection with kernel $\langle X \rangle$.

Now recall the definition of an almost-Hermitian manifold.

\begin{defn} \label{defn:almostHermitian}
An \textbf{almost-Hermitian structure} on an even-dimensional manifold $W$ is the datum of a Riemannian metric $g$ and an almost-complex structure $J$ such that 
$$g(J-, J-) = g(-, -).$$

A \textbf{almost-Hermitian} manifold is an even-dimensional manifold $W$ equipped with an almost-Hermitian structure $(J,g)$.
\end{defn}

There is a distinguished class of translation-invariant almost-complex structures $J$ on the cylinder $\mathbb{R} \times M$ called \emph{$\eta$-adapted} almost-complex structures which incorporate the Hamiltonian vector field $X$. 

Such an almost-complex structure (denoted for the moment by $J$) has a natural associated Riemannian metric $g$, and it is easy to show that the pair $(J,g)$ forms an almost-Hermitian structure on $\mathbb{R} \times M$. 

We now proceed with all of the relevant definitions. 

\begin{defn} \label{defn:adaptedJ}
An almost-complex structure on $\mathbb{R} \times M$ is \textbf{$\eta$-adapted} (or just \textbf{adapted} when the context is clear) if it satisfies the following properties:
\begin{enumerate}
    \item $J$ is invariant with respect to translation in the $\mathbb{R}$ factor. 
    \item Let $a$ denote the $\mathbb{R}$ coordinate. Then
    $$J(\partial_a) = X$$
    and
    $$J(X) = -\partial_a$$
    \item $J$ preserves $\xi = \text{ker}(\lambda)$ and the restriction $J|_\xi$ is compatible with $\omega$, i.e.
    $$\omega(JV_1, JV_2) = \omega(V_1,V_2)$$
    for any pair of tangent vectors $V_1$ and $V_2$ in $\xi$ and
    $$\omega(V, JV) > 0$$
    for any tangent vector $V$ in $\xi$. 
\end{enumerate}
\end{defn}

Now fix an adapted almost-complex structure $J$ on $\mathbb{R} \times M$. We will always use $a$ to denote the $\mathbb{R}$-coordinate on $\mathbb{R} \times M$. Then we may define a Riemannian metric $g$ by 
$$g(-, -) = (da \wedge \lambda + \omega)(-, J-).$$

Here $\lambda$ and $\omega$ denote the translation-invariant pullbacks to $\mathbb{R} \times M$ of the corresponding differential forms on $M$. 

\begin{rem} \label{rem:metric}
Note by definition that this metric is cylindrical, so it induces a natural metric $g$ on $M$ given explicitly by
$$g(-, -) = (\lambda \otimes \lambda)(-, -) + \omega(-, J-).$$

When we think of $g$ as a metric on $M$ in what follows, it will be this metric. 

It follows from this computation and the previous definitions that
$$\text{dvol}_g = \frac{1}{n!}\lambda \wedge \omega^n$$
and $X$ preserves this volume form. 
\end{rem}

The proof of the following lemma amounts to a quick algebraic computation using the above definitions.

\begin{lem}
The datum $(J,g)$ defines an almost-Hermitian structure on $\mathbb{R} \times M$. 
\end{lem}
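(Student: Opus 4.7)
The plan is to unpack the definition $g(-,-) = (da\wedge\lambda+\omega)(-,J-)$ and verify, in order, the three things required of an almost-Hermitian structure: symmetry of $g$, positive-definiteness of $g$, and the Hermitian identity $g(J-,J-)=g(-,-)$. All three follow from a short algebraic computation using the splitting
\[
T(\mathbb{R}\times M) \;=\; \langle \partial_a\rangle \oplus \langle X\rangle \oplus \xi.
\]
Throughout, I will decompose tangent vectors as $V = \alpha\,\partial_a + \beta X + \tilde V$ with $\tilde V\in\xi$, and use that under the adapted $J$ one has $JV = \alpha X - \beta\,\partial_a + J\tilde V$, with $J\tilde V\in\xi$.

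The single algebraic fact that drives everything is that the two-form $\Phi := da\wedge\lambda+\omega$ is $J$-invariant, i.e.\ $\Phi(JV,JW) = \Phi(V,W)$. For the $da\wedge\lambda$ summand this is an immediate consequence of the identities $da(JV)=-\lambda(V)$ and $\lambda(JV)=da(V)$ coming from item (2) of Definition \ref{defn:adaptedJ}. For the $\omega$ summand, the fact that $\omega$ annihilates both $\partial_a$ (it is pulled back from $M$) and $X$ (by Definition \ref{defn:hamiltonianVectorField}) reduces the computation to the subbundle $\xi$, where $J$-invariance of $\omega$ is precisely the compatibility statement in item (3). Symmetry of $g$ then follows at once: $g(W,V) = \Phi(W,JV) = \Phi(JW,J^2V) = -\Phi(JW,V) = \Phi(V,JW) = g(V,W)$, where the middle equality uses $J$-invariance of $\Phi$ and the last two use antisymmetry of $\Phi$ and $J^2=-\mathrm{id}$.

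For positive-definiteness I simply evaluate
\[
g(V,V) \;=\; \Phi(V,JV) \;=\; (da\wedge\lambda)(V,JV) + \omega(\tilde V,J\tilde V).
\]
The first summand equals $da(V)\,\lambda(JV) - da(JV)\,\lambda(V) = \alpha^2+\beta^2$, while the second is $>0$ whenever $\tilde V\neq 0$ and $=0$ otherwise, by the positivity clause in item (3). Hence $g(V,V)\geq\alpha^2+\beta^2+\omega(\tilde V,J\tilde V)\geq 0$, with equality exactly when $V=0$. Finally, the Hermitian identity $g(JV,JW)=g(V,W)$ is immediate from $J$-invariance of $\Phi$ together with $J^2=-\mathrm{id}$: $g(JV,JW) = \Phi(JV,J^2W) = -\Phi(JV,W) = -\Phi(J^2V,JW) = \Phi(V,JW) = g(V,W)$.

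There is no real obstacle; the only thing to be slightly careful about is keeping track of signs and of the fact that $\omega$ truly vanishes on the $\langle\partial_a\rangle\oplus\langle X\rangle$ factor, so that cross-terms between $\xi$ and $\langle\partial_a,X\rangle$ do not appear. Once the decomposition above is fixed, each of the three verifications is a two-line bilinear-algebra computation.
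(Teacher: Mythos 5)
Your proof is correct and is exactly the ``quick algebraic computation'' the paper alludes to without writing out: the paper offers no further detail, and your verification via the splitting $\langle\partial_a\rangle\oplus\langle X\rangle\oplus\xi$ and the $J$-invariance of $da\wedge\lambda+\omega$ is the standard argument it has in mind. (The only cosmetic quibble: the identities $da(JV)=-\lambda(V)$ and $\lambda(JV)=da(V)$ use item (3) of Definition \ref{defn:adaptedJ} as well as item (2), since one needs $J\xi\subseteq\xi$ to kill the $\xi$-component.)
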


We refer to almost-Hermitian manifolds of the form $(\mathbb{R} \times M, J, g)$ as \emph{adapted cylinders} over $M$. 

\begin{defn}
\label{defn:adaptedCylinder} An \textbf{$\eta$-adapted cylinder} (or \textbf{adapted cylinder} when the context is clear) over a framed Hamiltonian manifold $(M, \eta = (\lambda, \omega))$ is an almost-Hermitian manifold of the form $(\mathbb{R} \times M, J, g)$ where $(J,g)$ is the almost-Hermitian structure defined by a choice of an $\eta$-adapted almost-complex structure $J$ and 
$$g(-, -) = (da \wedge \lambda + \omega)(-, J-).$$
\end{defn}

\begin{rem}
The definition of an $\eta$-adapted almost-complex structure $J$ on $\mathbb{R} \times M$ was introduced in \cite{FishHoferFeral}, and mirrors the definition of an admissible almost-complex structure $J$ on the symplectization $\mathbb{R} \times M$ over a contact manifold introduced in \cite{HoferWeinstein}. Accordingly, \cite{FishHoferFeral} refers to $\mathbb{R} \times M$ as the ``symplectization'' due to this correspondence with the contact case. 

We prefer to use the terminology ``adapted cylinder'' because $\mathbb{R} \times M$ may not be symplectic if $M$ is not contact. More precisely, when $M$ is a framed Hamiltonian manifold that is not a contact manifold, $\mathbb{R} \times M$ to our knowledge does not necessarily admit a natural symplectic form that is compatible with an $\eta$-adapted almost-complex structure $J$. 
\end{rem}

Fix a choice of $\eta$-adapted cylinder $(\mathbb{R} \times M, J, g)$ over $M$. Now we will define ``$\omega$-finite'' pseudoholomorphic curves, which are $J$-holomorphic curves of the form
$$\mathbf{u} = (u, C, j, \mathbb{R} \times M, J, D, \mu)$$
satisfying some additional restrictions. 

We begin with defining a couple of useful notions of energy. Let $a$ denote the $\mathbb{R}$-coordinate projection function on $\mathbb{R} \times M$. Write $\mathcal{R}$ for the set of regular values of the real-valued map $a \circ u$. By Sard's theorem it follows that $\mathcal{R}$ has full Lebesgue measure in $\mathbb{R}$. 

\begin{defn} \label{defn:energy}
Let 
$$\mathbf{u} = (u, C, j, \mathbb{R} \times M, J, D, \mu)$$
be a $J$-holomorphic curve. The \textbf{$\lambda$-energy} of $\mathbf{u}$ is the function
$$E_\lambda(u, -): \mathcal{R} \to \mathbb{R}$$
defined by
$$E_\lambda(u, t) = \int_{(a \circ u)^{-1}(t)} u^*\lambda.$$

The \textbf{$\omega$-energy} of $\mathbf{u}$ is
$$E_\omega(u) = \int_C u^*\omega.$$
\end{defn}

The following lemma is a consequence of the adaptedness of $J$.

\begin{lem} \label{lem:omegaNonnegative}
Let 
$$\mathbf{u} = (u, C, j, \mathbb{R} \times M, J, D, \mu)$$
be a connected $J$-holomorphic curve.

The $\omega$-energy $E_\omega(u)$ is non-negative, and it is equal to zero if and only if either the map $u$ is constant or has image contained in $\mathbb{R} \times \gamma \subset \mathbb{R} \times M$, where $\gamma$ is a trajectory of the Hamiltonian vector field.  

Moreover, $u^*\omega$ is non-negative pointwise on the tangent planes of $C$. 
\end{lem}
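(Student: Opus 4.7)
The plan is to reduce the pointwise claim to a linear-algebra computation in local holomorphic coordinates on $C$, then deduce the global statements by integration and a Frobenius-type argument.

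First I would fix a point $z_0 \in C$ (away from nodal and boundary points) and choose a local holomorphic coordinate $s+it$ so that $\partial_t = j \partial_s$. Since $u$ is $J$-holomorphic, $du(\partial_t) = J\, du(\partial_s)$, hence
$$u^*\omega(\partial_s,\partial_t) \;=\; \omega\bigl(du(\partial_s),\, J\,du(\partial_s)\bigr).$$
Using the splitting $T(\mathbb{R}\times M) = \langle \partial_a\rangle \oplus \langle X\rangle \oplus \xi$, I would decompose $du(\partial_s) = \alpha\,\partial_a + \beta\,X + V$ with $V \in \xi$. Because $J(\partial_a)=X$, $J(X)=-\partial_a$ and $J$ preserves $\xi$, we get $J\,du(\partial_s) = \alpha X - \beta \partial_a + JV$. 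Now the two-form $\omega$ is pulled back from $M$, so it annihilates $\partial_a$, and by the defining property of the Hamiltonian vector field it also annihilates $X$. Expanding the symplectic pairing, all mixed terms vanish and we are left with
$$u^*\omega(\partial_s,\partial_t) \;=\; \omega(V, JV).$$
Condition (3) of Definition \ref{defn:adaptedJ} gives $\omega(V,JV)\geq 0$, with equality precisely when $V=0$, i.e.\ when $\pi_\xi\circ du = 0$ at $z_0$. This proves that $u^*\omega$ is pointwise non-negative on the tangent planes of $C$, and integrating yields $E_\omega(u)\geq 0$.

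For the equality case, I would observe that $E_\omega(u)=0$ combined with the pointwise non-negativity forces $u^*\omega \equiv 0$, hence $\pi_\xi \circ du \equiv 0$ everywhere on $C$. Thus $du$ takes values in the rank-two distribution $\mathcal{D} := \langle \partial_a, X\rangle$ on $\mathbb{R}\times M$. Since both $\partial_a$ and the pullback of $X$ are translation-invariant and $X$ does not depend on $a$, we have $[\partial_a, X]=0$, so $\mathcal{D}$ is involutive and integrable by Frobenius. Its integral leaves are exactly the cylinders $\mathbb{R}\times\gamma$, where $\gamma$ is an orbit of $X$ in $M$ (which are genuine curves since $\lambda(X)\equiv 1$ implies $X$ is nowhere zero). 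Hence locally $u$ maps into such a leaf.

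To globalize, I would use connectedness. On each connected component of $C$, continuity of $u$ and the fact that the leaves partition $\mathbb{R}\times M$ force the image of that component to lie in a single leaf $\mathbb{R}\times \gamma$, or to reduce to a constant if the $\xi$-transverse motion also collapses (noting that no non-constant $J$-holomorphic map can have image in a single fiber $\mathbb{R}\times\{p\}$, since the line $\langle\partial_a\rangle$ is not $J$-invariant). Since $u$ identifies the two points of each nodal pair in $D$, the condition of Definition \ref{defn:connectedCurve} then implies that all components land in the same leaf $\mathbb{R}\times\gamma$, giving the stated alternative. The main step to execute carefully is this last passage from the pointwise/local statement to a single global orbit, in particular verifying the Frobenius integrability of $\mathcal{D}$ and handling the bookkeeping across nodal identifications; the pointwise inequality itself is a one-line algebraic consequence of adaptedness.
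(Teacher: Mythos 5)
Your argument is correct and is precisely the computation the paper has in mind when it states the lemma without proof as ``a consequence of the adaptedness of $J$'': the pointwise identity $u^*\omega(\partial_s, j\partial_s) = \omega\bigl(\pi_\xi du(\partial_s), J\pi_\xi du(\partial_s)\bigr)$, which follows from the Cauchy--Riemann equation, the vanishing of $\omega$ on $\partial_a$ and $X$, and compatibility of $J|_\xi$ with $\omega$, together with the Frobenius/leaf argument and the nodal bookkeeping for the equality case. The only step left implicit is the easy converse of the ``if and only if'' (a constant map, or a map with image in $\mathbb{R}\times\gamma$, has $\pi_\xi\circ du \equiv 0$ and hence $E_\omega(u)=0$), which follows from the same pointwise identity.
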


Now we can define $\omega$-finite pseudoholomorphic curves.

\begin{defn}\label{defn:omegaFiniteCurve}
A $J$-holomorphic curve 
$$\mathbf{u} = (u, C, j, \mathbb{R} \times M, J, D, \mu)$$ is \textbf{$\omega$-finite} if
\begin{itemize}
\item The map $u$ is proper.
\item The image $u(C)$ has finitely many connected components.
\item $E_\omega(u) < \infty$.
\item The domain $C$ has at least one non-compact connected component.
\item There is a compact set $K \subseteq \mathbb{R} \times M$ such that the image $u(\partial C)$ of the boundary of $C$ is contained in $K$.
\end{itemize}
\end{defn}

The second condition in Definition \ref{defn:omegaFiniteCurve} regarding the connectedness of $u(C)$ is unnatural, but required to rule out some pathological cases in the proof of Proposition \ref{prop:periodicOrbits}. However, it is extremely weak and quite easy to verify in practice. We also note that Definition \ref{defn:omegaFiniteCurve} has some similarities to the definition of a ``feral curve'' given in \cite{FishHoferFeral}.

However, the domain of a feral pseudoholomorphic curve satisfies additional ``finite topology'' assumptions that the domain of an $\omega$-finite curve is not required to satisfy. 

First, the genus of the domain $C$ of a feral pseudoholomorphic curve is finite. See Definition \ref{defn:curveGenus} and the discussion above it for our definition of the genus of $C$. 

Second, a feral pseudoholomorphic curve has finitely many marked and nodal points. 

Third, the domain $C$ of a feral pseudoholomorphic curve is required to have finitely many non-compact ends, referred to as ``generalized punctures''.

We define this below, but again one should either check that this is well-defined or refer to \cite{FishHoferFeral}.

\begin{defn}
For any compact sub-surface $C' \subseteq C$, write $\mathcal{C}_{\text{non-compact}}(C\setminus C')$ for the number of non-compact connected components of $C \setminus C'$. Then for any exhaustion of $C$ by compact sub-surfaces $\{C_k\}_{k \geq 0}$ as in Definition \ref{defn:curveGenus}, define
$$\text{Punct}(C) = \lim_{k \to \infty} \#\mathcal{C}_{\text{non-compact}}(C \setminus (C_k \setminus \partial C_k)) \in \mathbb{Z}_{\geq 0} \cup \{\infty\}.$$
\end{defn}

We collect this discussion in the following definition.

\begin{defn} \label{defn:feralCurve}
A $J$-holomorphic curve
$$\mathbf{u} = (u, C, j, \mathbb{R} \times M, J, D, \mu)$$
is \textbf{feral} if it is $\omega$-finite, and moreover satisfies the following conditions:
\begin{itemize}
\item $\text{Genus}(C) < \infty$.
\item $\#D < \infty$.
\item $\#\mu < \infty$.
\item $\text{Punct}(C) < \infty.$
\end{itemize}
\end{defn}

When we construct examples in the sequel \cite{sequel}, we will verify that the pseudoholomorphic curves in question are not only $\omega$-finite, but also feral. 

\subsection{Statement of main theorems} \label{subsec:thmStatements}

Now we have sufficient background to give statements of our main theorems. The statement of Theorem \ref{thm:invariantMeasures}, our main theorem about existence of invariant probability measures is fairly technical. We will give the statement below as well as various applications, and then provide an explanation of the main ideas behind Theorem \ref{thm:invariantMeasures} in Section \ref{subsec:history}. 

Basic terminology from measure theory is also used throughout the statements of the main theorems and their proofs. The reader may use Appendix \ref{sec:measureTheory} or \cite{Simon14} as a reference.

For the remainder of this subsection, fix a closed framed Hamiltonian manifold $(M^{2n+1}, \eta = (\lambda, \omega))$. 

\begin{thm}
\label{thm:invariantMeasures} Suppose that there exists an $\eta$-adapted almost-complex structure $J$ on $\mathbb{R} \times M$, a Riemann surface $(C, j)$ and an $\omega$-finite $J$-holomorphic curve 
$$\mathbf{u} = (u, C, j, \mathbb{R} \times M, J, D, \mu).$$

Then, either $X$ has a periodic orbit or the following statement holds. For any unbounded sequence $\{t_k\}$ of real numbers for which $u(C)$ has nontrivial intersection with the level sets $\{t_k\} \times M$, the following holds after possibly passing to a subsequence. 

First, there exists a sequence of real numbers $\{\delta_k\}$ converging to zero such that $u(C)$ intersects $\{t_k + \delta_k\} \times M$ for every $k$. 

Second, the Radon probability measures $\sigma_k$ on $M$ defined by
$$\int_M f d\sigma_k = (\int_{u(C) \cap \{t_k + \delta_k\} \times M} f\lambda) / (\int_{u(C) \cap \{t_k + \delta_k\} \times M} \lambda)$$
converge weakly to a non-zero probability measure $\sigma$, invariant with respect to the Hamiltonian vector field $X$. 
\end{thm}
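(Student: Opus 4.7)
Assuming $X$ has no periodic orbit, the plan is to exploit $E_\omega(u) < \infty$ to extract slices $\Gamma_k := (a \circ u)^{-1}(t_k + \delta_k)$ whose image tangent, in the splitting $TM = \langle X\rangle \oplus \xi$, becomes asymptotically concentrated in the $\langle X\rangle$ direction. Such slices then project to curves in $M$ that resemble unions of arcs of $X$-trajectories, and after normalization by their $\lambda$-weights the averaged measures weak-$\ast$ converge to an $X$-invariant probability measure.

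After passing to a subsequence I may assume $t_k \to +\infty$ and that the slabs $(a \circ u)^{-1}([t_k, t_k + 1])$ are pairwise disjoint. Since $u^*\omega \geq 0$ pointwise by Lemma \ref{lem:omegaNonnegative} and has total integral $E_\omega(u) < \infty$, a further diagonal extraction achieves $kA_k \to 0$, where $A_k := \int_{(a \circ u)^{-1}([t_k, t_k + 1/k])} u^*\omega$. Combining Sard's theorem with the coarea formula applied to $a \circ u$, I can then select $\delta_k \in [0, 1/k]$ so that $t_k + \delta_k$ is a regular value of $a \circ u$ (so that $\Gamma_k$ is a smooth closed $1$-submanifold of $C$), the slice is nonempty, and the ``local $\omega$-density along $\Gamma_k$'' is small: writing $du(\tau) = w_X X + w_\xi$ at each point of $\Gamma_k$ with $\tau$ the unit slice tangent and $w_\xi \in \xi$, one has
$$\int_{\Gamma_k}\frac{|w_\xi|^2}{|w_X|}\,d\tau \;\leq\; 2kA_k \;\longrightarrow\; 0.$$

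Weak-$\ast$ compactness on the compact manifold $M$ then extracts a Borel probability limit $\sigma$ of the $\sigma_k$. The core of the proof is $X$-invariance. By the standard reduction $f\circ\phi_X^T - f = X(g_T)$ with $g_T(p) := \int_0^T f(\phi_X^s(p))\,ds$, it is enough to verify $\int_M X(g)\,d\sigma = 0$ for every $g\in C^\infty(M)$. The key observation is the algebraic decomposition $dg = (Xg)\lambda + \alpha$ on $M$, where $\alpha := dg - (Xg)\lambda$ vanishes on $X$ and depends only on $dg|_\xi$. Because $\Gamma_k$ is a closed $1$-submanifold of $C$ and $g\circ u$ is smooth, $\int_{\Gamma_k} d(g\circ u) = 0$, giving
$$\int_{\Gamma_k} u^*\bigl((Xg)\lambda\bigr) \;=\; -\int_{\Gamma_k} u^*\alpha.$$
Pointwise $|u^*\alpha(\tau)| \leq C|w_\xi|$, and Cauchy--Schwarz against the length element then gives
$$\Bigl|\int_{\Gamma_k} u^*\alpha\Bigr| \;\leq\; C\,\sqrt{\int_{\Gamma_k} |w_X|\,d\tau} \cdot \sqrt{\int_{\Gamma_k}\frac{|w_\xi|^2}{|w_X|}\,d\tau} \;=\; o\bigl(\sqrt{N_k}\bigr),$$
where $N_k = \int_{\Gamma_k} u^*\lambda$. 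Dividing by $N_k$ yields $\int X(g)\,d\sigma_k \to 0$ provided $N_k$ does not collapse to $0$.

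The principal obstacle is therefore ruling out the degenerate scenario $N_k \to 0$: were this to happen, the limit $\sigma$ would concentrate at a single point and, by $X$-invariance, at a zero of $X$---impossible since $\lambda(X) \equiv 1$. Converting this heuristic into a rigorous exclusion should follow by applying standard pseudoholomorphic compactness to the slabs above $\Gamma_k$ (whose $\omega$-area is vanishing): any $C^\infty_{\text{loc}}$ limit must be a trivial cylinder $\mathbb{R} \times \gamma$ over an $X$-trajectory $\gamma$, and a collapsing slice length would force $\gamma$ to be a periodic orbit or a zero of $X$, both excluded by our running assumptions and the nonsingularity of $X$. A subsidiary technicality is handling the sign of $w_X$ on each component of $\Gamma_k$ so that the Cauchy--Schwarz step genuinely uses $|w_X|$ throughout; this is fixed by an orientation convention coming from the coorientation of $\Gamma_k$ in $C$ by $d(a\circ u)$.
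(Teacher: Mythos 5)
Your main construction runs parallel to the paper's: you select the shifted regular levels $t_k+\delta_k$ by a Chebyshev argument against the finite $\omega$-energy, and you prove invariance by writing $dg=(Xg)\lambda+\alpha$ with $\alpha(X)\equiv 0$, applying Stokes on the closed level loops, and killing the $\alpha$-term with an estimate that trades $\omega$-energy for control of the $\xi$-component of the slice tangents. Your Cauchy--Schwarz bound $\int_{\Gamma_k}|w_\xi|\leq(\int_{\Gamma_k}w_X)^{1/2}(\int_{\Gamma_k}|w_\xi|^2/w_X)^{1/2}$ is a clean $L^2$ substitute for the weak-type splitting of Lemma \ref{lem:almostVertical} (Fish--Hofer) used in Lemma \ref{lem:niceOneForms}, and it is correct: on a regular level $w_X=|\nabla(a\circ u)|_{u^*g}>0$, so the division is legitimate, and the coarea identity $u^*\omega(\rho,\tau)=|w_\xi|^2_g$ makes your selection of $\delta_k$ work (modulo the routine care needed to get $k A_k\to 0$ along a subsequence rather than just $\liminf kA_k=0$).

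The genuine gap is the degenerate case. First, your heuristic that $N_k\to 0$ would force $\sigma$ to concentrate at a point is unjustified: the $\sigma_k$ are normalized to be probability measures regardless of $N_k$, and a slice of small $\lambda$-length can be spread over all of $M$; what actually fails when $N_k\to 0$ is only the invariance estimate, so no contradiction with $\lambda(X)\equiv 1$ is available. Second, and more seriously, your proposed rigorous exclusion via ``standard pseudoholomorphic compactness'' applied to the slabs over $\Gamma_k$ is not available here: an $\omega$-finite curve carries no bound on the genus, the number of connected components, the number of ends, or the number of nodal points of its domain, and these can all degenerate along the slabs $(a\circ u)^{-1}([t_k-\epsilon,t_k+\epsilon])$, so there is no $C^\infty_{\mathrm{loc}}$ (Gromov/SFT-type) limit to speak of. This is precisely why the paper isolates the statement as Proposition \ref{prop:periodicOrbits} (bounded $\lambda$-energy implies a periodic orbit) and proves it in Appendix \ref{sec:currents} by passing to integer rectifiable $J$-holomorphic currents, invoking the Federer--Fleming compactness theorem together with the mass bound for semicalibrated currents, and then the De Lellis--Spadaro--Spolaor regularity theorem to recover an honest $J$-holomorphic curve whose image contains a translation- and flow-invariant set $(-\epsilon,\epsilon)\times\gamma$; finiteness of local mass then forces $\gamma$ to be periodic. (Note also that the hypothesis that $u(C)$ has finitely many components of image is used there to rule out pathologies.) Until the collapsing/bounded-$\lambda$-energy alternative is handled by an argument of this strength, the dichotomy asserted in the theorem is not established.
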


\begin{rem}
The above theorem requires the existence of an unbounded sequence $\{t_k\}$ of real numbers for which $u(C)$ has nontrivial intersection with the level sets $\{t_k\} \times M$. 
The existence of such a sequence follows from the definition of a $\omega$-finite pseudoholomorphic curve, since $u$ is proper and $C$ has at least one non-compact connected component. 
\end{rem}

Theorem \ref{thm:invariantMeasures} explicitly states how one may associate an invariant measure $\sigma$ to a pseudoholomorphic curve in $\mathbb{R} \times M$. 

Recall that the framed Hamiltonian manifold $M$ admits a natural $X$-invariant probability measure, given by the normalized density
$$\text{dvol}_\eta = (\int_M \lambda \wedge \omega^n)^{-1}(\lambda \wedge \omega^n).$$

A proof of this fact is sketched below the statement of Lemma \ref{lem:volumePreserving}. The following two results show that, given the existence of certain simple topological obstructions, the invariant measure $\sigma$ produced by Theorem \ref{thm:invariantMeasures} is not equal to $\text{dvol}_\eta$.

\begin{thm} \label{thm:mainNonExact}
Suppose $\omega^n$ is not exact. Then any invariant measure $\sigma$ produced by Theorem \ref{thm:invariantMeasures} is not the normalized volume density $\text{dvol}_\eta$. Moreover, the one-dimensional closed, $X$-invariant current $T$ defined by 
$$T(\alpha) = \sigma(\alpha(X))$$
is null-homologous. 
\end{thm}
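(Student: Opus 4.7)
The plan is to prove the null-homology of $T$ first; the inequality $\sigma \neq \text{dvol}_\eta$ then follows as a formal consequence. Indeed, if $\sigma$ were the normalized volume density, then using $\iota_X \lambda = 1$, $\iota_X \omega = 0$, and the identity $\alpha(X)\Omega = \alpha \wedge \iota_X \Omega$ for a top form $\Omega$ and $1$-form $\alpha$, one computes
$$T(\alpha) = \frac{\int_M \alpha \wedge \omega^n}{\int_M \lambda \wedge \omega^n}.$$
The de Rham class of this $1$-current is, under Poincar\'e duality, a nonzero scalar multiple of $\mathrm{PD}[\omega^n] \in H_1(M;\mathbb{R})$. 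The non-exactness hypothesis makes this class nonzero, contradicting null-homology.

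To establish null-homology, it suffices to show $T(\alpha) = 0$ for every closed $1$-form $\alpha$ on $M$. Decompose $\alpha = \alpha(X)\lambda + \alpha^\xi$ with $\alpha^\xi(X) \equiv 0$, and set $\Sigma_k := u(C) \cap (\{t_k + \delta_k\} \times M)$ and $L_k := \int_{\Sigma_k}\lambda$. Theorem \ref{thm:invariantMeasures} gives
$$T(\alpha) = \lim_k \frac{\int_{\Sigma_k}\alpha(X)\lambda}{L_k} = \lim_k \frac{\int_{\Sigma_k}\alpha}{L_k} - \lim_k \frac{\int_{\Sigma_k}\alpha^\xi}{L_k}.$$
For the first term, fix $T_0 > \sup\{a(p) : p \in u(\partial C)\}$. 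By properness of $u$, the sub-surface $V_k := u^{-1}(\{T_0 \le a \le t_k + \delta_k\}) \subset C$ is compact, and Stokes applied to the closed form $u^*\alpha$ on $V_k$ yields $\int_{\Sigma_k}\alpha = \int_{\Sigma_{T_0}}\alpha =: N_\alpha$, a constant independent of $k$.

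The main work is to control $\int_{\Sigma_k}\alpha^\xi$. At each point of the slice (assuming $t_k + \delta_k$ is a regular value of $a \circ u$), pick local conformal coordinates $(s,t)$ on $C$ with $\partial_s$ tangent to the slice, so $a_s = 0$. The adapted Cauchy--Riemann equations (Definition \ref{defn:adaptedJ}) then force $a_t = -\lambda(df(\partial_s))$ and $df(\partial_t) = J|_\xi df(\partial_s)^\xi \in \xi$, and the $J$-compatibility of $\omega|_\xi$ gives $u^*\omega = |df(\partial_s)^\xi|^2_\omega \, ds \wedge dt$. Estimating $|\alpha^\xi(df(\partial_s))| \leq \|\alpha^\xi\|_\infty |df(\partial_s)^\xi|$ and applying Cauchy--Schwarz with weight $|a_t|^{-1/2}$,
$$\Bigl|\int_{\Sigma_k}\alpha^\xi\Bigr| \;\leq\; \|\alpha^\xi\|_\infty \, F(t_k + \delta_k)^{1/2}\, L_k^{1/2}, \qquad F(a) := \int_{\Sigma_a}\frac{|df^\xi|^2}{|a_t|}\,ds.$$
The coarea formula identifies $\int_{T_0}^\infty F(a)\,da$ with the $\omega$-energy contribution of $u^{-1}(\{a \geq T_0\})$, bounded by $E_\omega(u) < \infty$, so $F \in L^1(\mathbb{R}_{\geq T_0})$. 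Passing to a further subsequence and adjusting $\delta_k$ (compatible with the flexibility in the conclusion of Theorem \ref{thm:invariantMeasures}), arrange $F(t_k + \delta_k) \to 0$. Combined with $L_k \to \infty$, this gives $|\int_{\Sigma_k}\alpha^\xi|/L_k \to 0$ and $N_\alpha/L_k \to 0$, whence $T(\alpha) = 0$.

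The hard part is the $\alpha^\xi$ estimate, which requires coordinating three ingredients: finiteness of the $\omega$-energy, the Cauchy--Riemann constraint $df(\partial_t) \in \xi$ on slices, and the freedom to refine the subsequence and perturb $\delta_k$. Verifying $L_k \to \infty$ is a secondary technical point: for measures produced by the slicing construction in Theorem \ref{thm:invariantMeasures} (the alternative to the periodic-orbit case), bounded $L_k$ combined with $F(t_k + \delta_k) \to 0$ would force the slices to collapse onto a closed trajectory of $X$, returning us to the periodic-orbit case.
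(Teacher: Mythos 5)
Your proposal is correct, and its overall architecture matches the paper's: decompose a one-form as $\alpha(X)\lambda$ plus a form annihilating $X$, use Stokes/homological constancy of $\int_{\Sigma_k}\alpha$ together with $L_k\to\infty$ to dispose of the closed part, and use finiteness of the $\omega$-energy to kill the $\xi$-part. Two genuine differences are worth recording. First, where you control $L_k^{-1}\int_{\Sigma_k}\alpha^\xi$ by a Cauchy--Schwarz/coarea argument (showing $F\in L^1$ and selecting levels where $F\to 0$), the paper instead invokes a weak-type estimate of Fish--Hofer (Lemma \ref{lem:almostVertical}) and an $L^1$-splitting of each level loop into a short ``non-vertical'' part and a part where $|u^*\lambda|_{u^*g}$ is close to $1$ (Lemma \ref{lem:niceOneForms}, Corollary \ref{cor:betaVanishing}); both routes rest on $E_\omega(u)<\infty$ and both require exactly the freedom you use, namely perturbing $\delta_k$ so the chosen levels avoid a finite-measure bad set. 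This adjustment must be understood as part of the construction of the measures that Theorem \ref{thm:invariantMeasures} produces (which is how the paper treats it), since a different choice of $\delta_k$ could in principle yield a different limit $\sigma$. Second, you derive $\sigma\neq\mathrm{dvol}_\eta$ formally from the null-homology of $T$ together with $[\omega^n]\neq 0$; the paper argues in the opposite order, choosing by Poincar\'e duality a closed $\nu$ with $\int_M\nu\wedge\omega^n\neq 0$ and showing $\sigma(\nu(X))=0$ while the volume integral of $\nu(X)$ is nonzero --- your packaging is slightly cleaner and the two are equivalent. The one step you should not gloss over is $L_k\to\infty$: your heuristic that bounded $L_k$ ``forces the slices to collapse onto a closed trajectory'' is, in the paper, precisely Proposition \ref{prop:periodicOrbits}, whose proof is a substantial excursion through the compactness and regularity theory of $J$-holomorphic currents; it is not visible from the statement of Theorem \ref{thm:invariantMeasures} alone and should be cited rather than re-derived in one sentence.
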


The second result assumes $\omega$ is exact. To state this result, recall that the \emph{self-linking number} $Lk(\omega)$ of $\omega$ is defined by picking any primitive $\nu$ of $\omega$ and setting
$$Lk(\omega) = \int_M \nu \wedge \omega^n.$$

Note that $Lk(\omega)$ does not depend on the choice of primitive. Any two such primitives differ by a closed $1$-form. Using the fact that $\omega$ is exact and performing an integration by parts indicates that the difference of the two integrals associated to these two primitives is zero. In dimension three, $Lk(\omega)$ can be interpreted as a measure of the ``asymptotic self-linking'' of the vector field $X$ (see \cite{Arnold74, ArnoldKhesin98, Vogel13}). 

\begin{thm} \label{thm:mainExact}
Suppose $\omega$ is exact and $Lk(\omega) \neq 0$. Then any invariant measure $\sigma$ produced by Theorem \ref{thm:invariantMeasures} is not the normalized volume density. 
\end{thm}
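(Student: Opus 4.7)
The plan is to evaluate $\sigma$ on the specific test function $f := \nu(X) \in C^\infty(M)$, where $\nu$ is any primitive of $\omega$. The preliminary algebraic observation is that on the $(2n+1)$-dimensional manifold $M$, one has $\alpha \wedge \omega^n = \alpha(X)\,\lambda \wedge \omega^n$ for every $1$-form $\alpha$: both sides are top forms, and contracting with $X$ and using $\iota_X \omega = 0$, $\lambda(X) = 1$ shows they agree. Applied to $\alpha = \nu$, this gives $\int_M \nu(X)\,\lambda \wedge \omega^n = Lk(\omega)$. Hence if $\sigma$ were the normalized volume density, one would have $\int_M f\,d\sigma = Lk(\omega)/\int_M \lambda \wedge \omega^n \neq 0$, so it suffices to show $\int_M f\,d\sigma = 0$.

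To this end I would write $\int_M f\,d\sigma_k = L_k^{-1} \int_{\mathrm{slice}} \nu(X)\,u^*\lambda$ (with $L_k := \int_{\mathrm{slice}} u^*\lambda$) and compare it to $L_k^{-1} \int_{\mathrm{slice}} u^*\nu$. At a regular point of $a \circ u$, the $\eta$-adaptedness of $J$ forces the tangent plane to $u(C)$ to equal $\mathrm{span}(\partial_a + v,\, X + Jv)$ for some $v \in \xi$, so the slice $u(C) \cap (\{t_k + \delta_k\} \times M)$ is tangent to $X + Jv$. Evaluating the relevant $1$-forms against this direction yields the pointwise identity $u^*\nu - \nu(X)\,u^*\lambda = \nu(Jv)\,ds$ on the slice, and Cauchy--Schwarz gives $|\int_{\mathrm{slice}} \nu(Jv)\,ds| \leq \|\nu\|_\infty \sqrt{L_k\,\epsilon_k}$, where $\epsilon_k := \int_{\mathrm{slice}} |v|^2\,ds$.

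The auxiliary integral $\int_{\mathrm{slice}} u^*\nu$ admits a clean Stokes bound. A local computation using $\iota_{\partial_a}\omega = \iota_X \omega = 0$ and $\omega(v, Jv) = |v|^2$ shows $u^*\omega = |v|^2\,da \wedge ds$, whence $\int_{\mathbb{R}} \epsilon(t)\,dt = E_\omega(u) < \infty$. Thus $\delta_k$ can be perturbed within a small window (by a Fubini-plus-Sard argument) so that $\epsilon_k \to 0$ while $t_k + \delta_k$ remains a regular value. On the other hand, Stokes applied to $u^*\omega = d(u^*\nu)$ on a slab $u(C) \cap \{a_0 \leq a \leq t_k + \delta_k\}$, together with $E_\omega(u) < \infty$ and the finiteness of $\int_{\partial C} u^*\nu$ (note $\partial C$ is compact because $u$ is proper and $u(\partial C)$ lies in a compact set), gives $|\int_{\mathrm{slice}} u^*\nu| \leq C$ for a constant $C$ independent of $k$. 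Combining these bounds, $|\int_M f\,d\sigma_k| \leq C/L_k + \|\nu\|_\infty \sqrt{\epsilon_k/L_k}$, which vanishes in the limit provided $L_k \to \infty$.

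The main obstacle is establishing $L_k \to \infty$ along the chosen subsequence. I expect this to be part of, or readily extractable from, the proof of Theorem \ref{thm:invariantMeasures}: the sequence-construction branch of that theorem applies only when $X$ has no periodic orbit, and in that regime a uniformly bounded family of slices (which, by the choice of $\delta_k$, are asymptotically tangent to $X$) would accumulate on a short compact $X$-invariant arc, a configuration morally ruled out by the no-periodic-orbit hypothesis. This dichotomy is the single non-routine step; the remainder is Stokes' theorem, Cauchy--Schwarz, and the basic linear algebra of $\eta$-adapted almost-complex structures.
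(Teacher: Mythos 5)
Your proposal is correct and follows essentially the same route as the paper: test $\sigma$ against $q=\nu(X)$, split $\nu = \nu(X)\lambda + \beta$ with $\beta(X)\equiv 0$, bound $\int_{\text{slice}}u^*\nu$ by Stokes plus finite $\omega$-energy, kill the $\beta$-contribution using the asymptotic verticality of the level sets (your explicit $|v|^2$ coarea computation is precisely the mechanism behind Lemma \ref{lem:almostVertical} and Corollary \ref{cor:betaVanishing}), and divide by $E_\lambda(u,t_k)\to\infty$. The one step you defer --- that $E_\lambda(u,t_k)\to\infty$ in the branch where $\sigma$ is actually produced --- is exactly Proposition \ref{prop:periodicOrbits} and is built into the construction in Theorem \ref{thm:invariantMeasures}, so nothing essential is missing.
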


Suppose $M$ satisfies the conditions of Theorem \ref{thm:invariantMeasures}, and the conditions of either Theorem \ref{thm:mainNonExact} or Theorem \ref{thm:mainExact}. This implies that the Hamiltonian vector field $X$ has at least two non-trivial invariant probability measures: the measure $\sigma$ from Theorem \ref{thm:invariantMeasures} and the normalized volume density $\text{dvol}_\eta$. 

Recall that a vector field is \emph{uniquely ergodic} if it has exactly one invariant probability measure. It follows from the above discussion that the Hamiltonian vector field $X$ is not uniquely ergodic in many cases.

\begin{cor} \label{cor:uniqueErgodicity}
Suppose $M$ satisfies the conditions of Theorem \ref{thm:invariantMeasures}, and the conditions of either Theorem \ref{thm:mainNonExact} or Theorem \ref{thm:mainExact}. Then the Hamiltonian vector field $X$ is not uniquely ergodic.
\end{cor}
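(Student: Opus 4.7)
The plan is to exhibit two distinct $X$-invariant probability measures: the normalized volume density $\mathrm{dvol}_\eta$, which is always invariant by the observation recorded in Remark \ref{rem:metric} (and which is a genuine probability measure after normalization since $M$ is closed and $\lambda\wedge\omega^n>0$), and a second invariant probability measure coming from the hypothesis on $(M,\eta)$ via Theorem \ref{thm:invariantMeasures}. Unique ergodicity would force the two to agree, so producing any invariant probability measure distinct from $\mathrm{dvol}_\eta$ is what needs to be checked.

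First I would apply Theorem \ref{thm:invariantMeasures} to the given $\omega$-finite $J$-holomorphic curve, which yields a dichotomy: either $X$ admits a periodic orbit, or else one obtains a nonzero $X$-invariant probability measure $\sigma$ by weak limits of the level-set averages $\sigma_k$. It is important to branch on this dichotomy, because Theorems \ref{thm:mainNonExact} and \ref{thm:mainExact} only assert that the measure $\sigma$ produced in the second alternative differs from $\mathrm{dvol}_\eta$; they say nothing directly about the periodic orbit case.

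In the periodic orbit case, let $\gamma$ be a periodic trajectory of $X$ with minimal period $T>0$, and define the normalized arc-length measure $\mu_\gamma$ on $M$ by
\[
\int_M f\, d\mu_\gamma = \frac{1}{T}\int_0^T f(\phi_X^t(p))\, dt
\]
for any base point $p\in\gamma$. Standard arguments (using $\phi_X^s$-invariance of Lebesgue measure on $[0,T]$ modulo $T$) show $\mu_\gamma$ is a Borel probability measure invariant under the flow of $X$, and its support is contained in the image of $\gamma$, a one-dimensional closed subset of the $(2n+1)$-dimensional manifold $M$. Since $\mathrm{dvol}_\eta$ has full support, $\mu_\gamma\ne \mathrm{dvol}_\eta$, and we have produced our second invariant probability measure.

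In the remaining case, Theorem \ref{thm:invariantMeasures} produces the invariant probability measure $\sigma$ directly, and by hypothesis we may apply Theorem \ref{thm:mainNonExact} (if $\omega^n$ is not exact) or Theorem \ref{thm:mainExact} (if $\omega$ is exact with $Lk(\omega)\ne 0$) to conclude $\sigma\ne\mathrm{dvol}_\eta$. In either branch of the dichotomy, $X$ admits at least two distinct invariant probability measures, so by definition $X$ is not uniquely ergodic. There is no real obstacle here: the corollary is a formal combination of Theorem \ref{thm:invariantMeasures} with Theorem \ref{thm:mainNonExact} or Theorem \ref{thm:mainExact}, and the only subtlety worth flagging is the need to separately dispose of the periodic orbit alternative, which is handled by the orbit measure construction above.
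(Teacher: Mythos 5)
Your proof is correct and follows essentially the same route as the paper: exhibit two distinct invariant probability measures, namely the normalized volume density $\mathrm{dvol}_\eta$ and the measure $\sigma$ supplied by Theorem \ref{thm:invariantMeasures}, with Theorem \ref{thm:mainNonExact} or Theorem \ref{thm:mainExact} guaranteeing $\sigma \neq \mathrm{dvol}_\eta$. Your explicit disposal of the periodic-orbit branch of the dichotomy via the orbit measure is a sensible clarification (the paper handles that case only implicitly, cf.\ Example \ref{exe:invariantMeasureOrbit}), but it is not a different argument.
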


\begin{rem} \label{rem:examples}
Our pseudoholomorphic curve technique can be applied when the framed Hamiltonian manifold $M$ satisfies the assumptions of Theorem \ref{thm:invariantMeasures}. In other words, we require the existence of a general type of pseudoholomorphic curve mapping into $\mathbb{R} \times M$.

In the sequel \cite{sequel} we construct a large class of examples of framed Hamiltonian manifolds for which this pseudoholomorphic curve exists. 
\end{rem}

We can say more about the invariant measure $\sigma$ from Theorem \ref{thm:invariantMeasures} in the case where the framed Hamiltonian structure is not only exact, but also of ``confoliation type''.

\begin{defn} \label{defn:confoliationType}
A framed Hamiltonian structure $\eta = (\lambda, \omega)$ on $M^{2n+1}$ is of \textbf{confoliation type} if $\omega$ is exact, and moreover admits a primitive $\nu$ such that
$$\nu \wedge \omega^n \geq 0.$$
\end{defn}

\begin{defn}
Let $(M^{2n+1}, \eta = (\lambda, \omega))$ be an exact framed Hamiltonian manifold of confoliation type. Denote by
$$\text{ConfPrim}(\omega)$$
the set of all of primitives $\nu$ of $\omega$ such that $\nu \wedge \omega^n \geq 0$. 
\end{defn}

We now state our main structure theorem for the invariant measures produced by Theorem \ref{thm:invariantMeasures}, in the case where the framed Hamiltonian manifold is of confoliation type.

\begin{thm} \label{thm:confoliations}
Let $(M^{2n+1}, \eta = (\lambda, \omega))$ be an exact framed Hamiltonian manifold of confoliation type satisfying the conditions of Theorem \ref{thm:invariantMeasures}. 

Then at least one of the following statements hold:
\begin{itemize}
    \item The Hamiltonian vector field $X$ has a periodic orbit,
    \item The invariant measure $\sigma$ is supported in the closed set $$\bigcap_{\nu \in \text{ConfPrim}(\omega)} \{\nu(X) = 0\}$$
    consisting of the intersection of the zero sets of the smooth function $\nu(X)$, taken across all primitives $\nu \in \text{ConfPrim}(\omega)$. 
\end{itemize}
\end{thm}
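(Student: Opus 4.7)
Assume $X$ has no periodic orbit, so that Theorem \ref{thm:invariantMeasures} produces the invariant probability measure $\sigma$ as the weak limit of measures $\sigma_k$ with $\int_M f\,d\sigma_k = F_k(f\lambda)/F_k(\lambda)$, where $F_k(\alpha) := \int_{\Sigma_k} \alpha$ and $\Sigma_k := u(C) \cap (\{t_k+\delta_k\}\times M)$. Fix $\nu \in \text{ConfPrim}(\omega)$. A direct computation on a basis $(X, e_1, \ldots, e_{2n})$ with $e_i \in \xi$, using $\omega(X,-)=0$ and $\lambda(X)=1$, gives the pointwise identity $\nu \wedge \omega^n = \nu(X)\,\lambda \wedge \omega^n$; the confoliation hypothesis $\nu \wedge \omega^n \geq 0$ therefore amounts to $\nu(X) \geq 0$ on $M$. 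My plan is to prove that $\int_M \nu(X)\,d\sigma = 0$: since $\nu(X) \geq 0$, this forces $\sigma$ to be supported in $\{\nu(X)=0\}$, and intersecting over all $\nu \in \text{ConfPrim}(\omega)$ yields the theorem. Because $\nu(X)$ is continuous and $\sigma_k \to \sigma$ weakly, the task reduces to showing $F_k(\nu(X)\lambda)/F_k(\lambda) \to 0$.

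The key step will be the splitting $\nu = \nu(X)\lambda + \beta$, where $\beta := \nu - \nu(X)\lambda$ annihilates $X$, so that $F_k(\nu(X)\lambda) = F_k(\nu) - F_k(\beta)$. To control $F_k(\nu)$, I would apply Stokes to the chain $R_k := (a \circ u)^{-1}([t_0, t_k+\delta_k])$, with $t_0$ chosen above the $a$-height of the compact set containing $u(\partial C)$ (available by $\omega$-finiteness); using $d\nu = \omega$, this gives
\[
F_k(\nu) = \int_{\Sigma_{t_0}} \nu + \int_{R_k} u^*\omega,
\]
uniformly bounded in $k$ by $|\int_{\Sigma_{t_0}} \nu| + E_\omega(u)$ via Lemma \ref{lem:omegaNonnegative} and finiteness of $E_\omega(u)$. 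To control $F_k(\beta)$, I would use the $J$-holomorphic structure: in local isothermal coordinates $(s,t)$ on $C$, writing $u_s = a_s \partial_a + x_{s,X} X + y$ with $y \in \xi$, the Cauchy-Riemann equations $u_t = Ju_s$ force $x_{s,X} = -a_t$ and $\pi_\xi u_t = Jy$, and parametrizing $\Sigma_k$ by $(\dot s, \dot t) = (-a_t, a_s)$ yields a push-forward whose $\xi$-component is $-a_t y + a_s Jy$, of norm $\sqrt{a_s^2 + a_t^2}\,|y|$, while $\lambda$ applied to it equals $a_s^2 + a_t^2$. Since $\beta$ annihilates both $X$ and $\partial_a$, Cauchy-Schwarz then gives
\[
|F_k(\beta)| \leq \|\beta\|_\infty \int \sqrt{a_s^2 + a_t^2}\,|y|\,dr \leq \|\beta\|_\infty\, F_k(\lambda)^{1/2}\, \epsilon_k^{1/2},
\]
with $\epsilon_k := \int_{\Sigma_k} |y|^2\,dr$. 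The coarea identity $\int_{\mathbb{R}} \epsilon(\tau)\,d\tau = E_\omega(u) < \infty$ then lets me perturb the $\delta_k$ (still with $\delta_k \to 0$) via a Chebyshev-type selection so as to arrange $\epsilon_k \to 0$.

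Putting these estimates together yields $\int_M \nu(X)\,d\sigma_k \leq B/F_k(\lambda) + \|\beta\|_\infty \epsilon_k^{1/2}/F_k(\lambda)^{1/2}$, which tends to $0$ provided $F_k(\lambda) \to \infty$. The main obstacle will be ruling out the opposite alternative, that $F_k(\lambda)$ remains bounded along the chosen sequence; in that case the confoliation estimate alone does not conclude, and I expect instead that a Hofer-style asymptotic analysis of the bounded-action slices $\Sigma_k$ must extract a periodic orbit of $X$, returning us to the first alternative of the theorem. I expect this dichotomy to be already handled in (or importable from) the proof of Theorem \ref{thm:invariantMeasures}, so that the genuinely new content of the argument is the Stokes-plus-Cauchy-Schwarz estimate in the confoliation setting above.
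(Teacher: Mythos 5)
Your proposal is correct and follows essentially the same route as the paper: decompose $\nu = \nu(X)\lambda + \beta$ with $\beta(X)\equiv 0$, bound $\int_{\Sigma_k}u^*\nu$ via Stokes and the finite $\omega$-energy, show the normalized $\beta$-integral vanishes (your Cauchy--Schwarz computation is a self-contained re-derivation of the paper's Lemma \ref{lem:almostVertical}/Corollary \ref{cor:betaVanishing}), and divide by $E_\lambda(u,t_k)\to\infty$. The dichotomy you flag at the end --- that bounded $\lambda$-energy along the slices must instead yield a periodic orbit --- is exactly Proposition \ref{prop:periodicOrbits}, already established in the proof of Theorem \ref{thm:invariantMeasures}, so your deferral there is justified.
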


\begin{rem}
The closed set
$$\bigcap_{\nu \in \text{ConfPrim}(\omega)} \{\nu(X) = 0\}$$
is itself invariant under the flow of the Hamiltonian vector field $X$. 

Let $\nu \in \text{ConfPrim}(\omega)$ be any primitive of $\omega$ such that $\nu \wedge \omega^n \geq 0$. Denote by $\phi_X^t$ the time $t$ flow of the Hamiltonian vector field $X$. Then, since
$$(\phi_X^t)^*\omega = \omega$$
for any $t$, it follows that $(\phi_X^t)^*\nu$ is also a primitive of $\omega$ for any $t$, by the fact that the exterior derivative commutes with pullback maps on differential forms. 

Moreover, for any $t$,
\begin{align*}
    (\phi_X^t)^*\nu \wedge \omega^n &= (\phi_X^t)^*(\nu \wedge \omega^n) \\
    &\geq 0.
\end{align*}

It follows that $(\phi_X^t)^*\nu \in \text{ConfPrim}(\omega)$ for any $t$. This, along with the fact that $X$ is invariant under its own flow, implies immediately that 
$$\bigcap_{\nu \in \text{ConfPrim}(\omega)} \{\nu(X) = 0\}$$
is invariant under the flow of $X$. 
\end{rem}

\begin{rem}
It is, as of now, unclear what the weakest possible conditions are that ensure that the invariant measure $\sigma$ from Theorem \ref{thm:invariantMeasures} is not fully supported on the manifold $M$.
\end{rem}

\begin{rem} Theorem \ref{thm:confoliations} can be compared to Theorem $1.2$ of \cite{TaubesVectorFields}. The latter works in dimension three, while the former works in any odd dimension, albeit with the requirement that the conditions of Theorem \ref{thm:invariantMeasures} are satisfied.
\end{rem}

We also note that a weaker version of Corollary \ref{cor:uniqueErgodicity} in the non-exact case can be proved using a classical result of Schwartzman \cite{Schwartzman57} and the results of Hutchings-Taubes \cite{HutchingsTaubes09}. The results in \cite{HutchingsTaubes09} make use of embedded contact homology, a Floer-theoretic invariant for contact manifolds defined using certain counts of embedded pseudoholomorphic curves. The result is formulated and proved in Appendix \ref{sec:weakerNonExact}.
 
\subsection{Existence of $\omega$-finite pseudoholomorphic curves}

As noted briefly in Remark \ref{rem:examples}, the results of this paper apply to framed Hamiltonian manifolds $(M, \eta = (\lambda, \omega))$ which satisfy the conditions of Theorem \ref{thm:invariantMeasures}. That is, 
there is an $\eta$-adapted cylinder $(\mathbb{R} \times M, J, g)$ and an $\omega$-finite pseudoholomorphic curve
$$\mathbf{u} = (u, C, j, \mathbb{R} \times M, J, D, \mu).$$

At first glance, this requirement seems to complicate matters significantly. However, there are many rich pseudoholomorphic curve-based invariants for symplectic manifolds, which we leverage in \cite{sequel} to construct a large class of framed Hamiltonian manifolds satisfying the conditions of Theorem \ref{thm:invariantMeasures}. 

Recall from Example \ref{exe:hypersurfaces} that any closed, regular energy level $M = H^{-1}(0)$ of a Hamiltonian function $H$ on a closed symplectic manifold $(W, \Omega)$ admits a  framed Hamiltonian structure $\eta = (\lambda, \omega)$ such that $\omega = \Omega|_M$. The Hamiltonian structure $\eta$ also has the property that the associated Hamiltonian vector field $X$ coincides with the Hamiltonian vector field $X_H$ induced on $M$ by $H$ and $\Omega$. 

Our construction makes use of the \emph{Gromov-Witten invariants} of $W$, which were introduced in Gromov's original paper \cite{Gromov85} which initiated the study of pseudoholomorphic curves in symplectic geometry. Roughly, the Gromov-Witten invariants of $W$ are counts of pseudoholomorphic curves in $W$, satisfying various conditions on their topology and incidence with submanifolds of $W$. If $W$ has sufficiently rich Gromov-Witten invariants, we are able to deduce that there exists an integer $G \geq 0$ such that for any tame almost-complex structure $J$, there is a pseudoholomorphic curve
$$\mathbf{u} = (u, C, j, W, J, D, \mu)$$
crossing the hypersurface $M$. Furthermore, the domain $C$ has genus bounded above by $G$. 

Recall an almost-complex structure $J$ is \emph{tame} if $\Omega(V, JV) > 0$ for any tangent vector $V$.

One class of symplectic manifolds with sufficiently rich Gromov-Witten invariants, which are featured in the applications of our results in the subsequent subsection, are ``$\text{GW}_G$-connected'' symplectic manifolds. 

\begin{defn} \label{defn:GWconnected}
Fix an integer $G \geq 0$. A closed symplectic manifold $(W, \Omega)$ is \textbf{$\text{GW}_G$-connected} if there is a homology class $A \in H_2(W; \mathbb{Z})$ and integer $m \geq 0$ such that the Gromov-Witten map 
$$\Psi^W_{A, G, m+2}(e, e; -): H^*(W^m; \mathbb{Q}) \otimes H^*(\overline{\mathcal{M}}_{g, m+2}; \mathbb{Q}) \to \mathbb{Q}$$
does not vanish, where $e$ is the point class. 
\end{defn}

See \cite[Section $3$]{sequel} for an explanation of Gromov-Witten invariants and the notation used in Definition \ref{defn:GWconnected}. It is not required for any of the arguments in this paper. 

It is sufficient at this point to present an informal description of the notion of $\text{GW}_G$-connectedness of a symplectic manifold $(W, \Omega)$. Roughly, if $(W, \Omega)$ is $\text{GW}_G$-connected, we are able to deduce the following. For any tame almost-complex structure $J$ and any pair of points $p, q \in W$, there is a pseudoholomorphic curve
$$\mathbf{u}_{p,q} = (u_{p,q}, C, j, W, J, D, \mu)$$
with $(C, j)$ a closed, possibly disconnected Riemann surface of genus bounded above by $G$ and the image $u_{p,q}(C)$ passing through $p$ and $q$. 

\begin{exe}
\label{exe:CPNisConnected} In Gromov's seminal paper \cite{Gromov85}, it is proved in Theorem $0.2.B$ that $\mathbb{CP}^n$ with the standard symplectic structure is $\text{GW}_0$-connected (this is also sometimes called ``rationally connected'' in the symplectic geometry literature). 
\end{exe}

The following proposition is proved in \cite{sequel}. 

\begin{prop}
\label{prop:gwGConnected} \cite[Proposition $1.7$]{sequel} Fix an integer $G \geq 0$. Let $M = H^{-1}(0)$ be a closed, regular energy level of a smooth Hamiltonian function $H$ on a $\text{GW}_G$-connected symplectic manifold $(W, \Omega)$. Then there is an $\eta$-adapted cylinder $(\mathbb{R} \times M, J, g)$ and a feral pseudoholomorphic curve
$$\mathbf{u} = (u, C, j, \mathbb{R} \times M, J, D, \mu).$$
\end{prop}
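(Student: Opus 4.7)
The plan is to implement a neck-stretching construction, producing the desired feral curve as a limit of genus-bounded pseudoholomorphic curves in $W$ that cross the hypersurface $M$.

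First I would fix an $\eta$-adapted almost-complex structure $J_0$ on $\mathbb{R} \times M$. Using a tubular neighborhood of $M$ in $W$ identified with $(-\epsilon,\epsilon)\times M$ so that $\Omega$ restricts to $\omega$ along $M$, insert a cylindrical neck $[-T,T]\times M$ carrying $J_0$ and extend smoothly on the complement to a tame almost-complex structure compatible with $\Omega$. This produces, for each $T>0$, an enlarged symplectic manifold $W_T$ equipped with a tame almost-complex structure $\tilde{J}_T$ that agrees with $J_0$ on the neck. Fix two points $p,q \in W$ lying on opposite sides of $M$, regarded equally as points of $W_T$. By the hypothesis of $\text{GW}_G$-connectedness, for each $T$ there is a $\tilde{J}_T$-holomorphic curve $\mathbf{u}_T$ representing a fixed homology class $A$, with domain of genus at most $G$ and a uniformly bounded number of nodes and marked points, whose image contains both $p$ and $q$ and therefore must traverse the entire neck.

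The total symplectic energy $\int_{C_T} u_T^{*}\Omega$ equals $\langle [\Omega], A\rangle$, independent of $T$. Restricting to the portion of $\mathbf{u}_T$ inside the neck, the $\omega$-energy is uniformly bounded by the same quantity, so the $\omega$-energy of any subsequential limit in $\mathbb{R}\times M$ will be finite. With these uniform bounds in hand, the main step is to let $T \to \infty$ and apply a Gromov-type compactness theorem to the sequence $\{\mathbf{u}_T\}$. The appropriate compactness here is the feral-curve compactness of Fish--Hofer \cite{FishHoferFeral}, which is designed for precisely this situation: it extracts a subsequential limit whose components live in the closures of the two sides of $W \setminus M$ and in the adapted cylinder $\mathbb{R}\times M$, and certifies that the cylinder components satisfy the conditions of Definition \ref{defn:feralCurve}, namely finite genus, finitely many nodes and marked points, finitely many generalized punctures, and finite $\omega$-energy. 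Because each $\mathbf{u}_T$ crosses the ever-lengthening neck, at least one component of the limit in $\mathbb{R}\times M$ must be non-constant and contain a non-compact end; retaining such a component (and discarding any trivial cylinders over closed orbits) yields the required $\mathbf{u}$.

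The hardest step will be the feral compactness argument: establishing that a limit exists at all, and in particular that it has finitely many generalized punctures, is highly delicate in the absence of target-local area bounds, and is essentially the content of Fish--Hofer's compactness theorem. Adapting it from the contact setting to the present framed-Hamiltonian setting is the technical heart of \cite{sequel}. The remaining $\omega$-finiteness conditions --- properness of $u$, containment of $u(\partial C)$ in a compact set, and finiteness of the number of connected components of $u(C)$ --- then follow from retaining a single connected component of the limit and from the fact that $C$ can be taken without boundary.
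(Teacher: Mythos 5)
Your proposal follows the same neck-stretching strategy as the paper, which in fact does not prove this proposition here at all: it is cited from the sequel \cite{sequel}, and the paper's own informal description (degenerate a tame almost-complex structure near $M$, use $\text{GW}_G$-connectedness to produce genus-$\le G$ curves through points on opposite sides of $M$ for each stretching parameter, bound the $\omega$-energy by the pairing $\langle [\Omega], A\rangle$, and extract a limit in $\mathbb{R}\times M$) matches yours nearly point for point.

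The one substantive omission relative to the paper's own sketch is that you never address the possibility that the number of connected components of the domains $C_T$ grows without bound as $T\to\infty$; the paper singles this out as one of three obstacles (alongside the absence of local area bounds and the fact that only exhaustive Gromov convergence is available, so that finiteness of $\text{Punct}(C)$ needs a separate delicate argument --- both of which you do flag) that must be overcome before any limit can be extracted, and it requires its own a priori bound. Relatedly, since $\text{GW}_G$-connectedness only produces a \emph{possibly disconnected} curve through $p$ and $q$, the claim that the curve ``must traverse the entire neck'' needs the connectivity of the relevant component to be established rather than assumed. Finally, a minor mischaracterization: Fish--Hofer's feral-curve framework already operates in the framed Hamiltonian setting, so the adaptation carried out in \cite{sequel} is not ``from the contact setting'' but rather of the neck-stretching and compactness machinery to arbitrary (non-contact-type, non-stable) hypersurfaces with only Gromov--Witten-theoretic input.
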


We note that Proposition \ref{prop:gwGConnected} shows that there is a \emph{feral} pseudoholomorphic curve in an $\eta$-adapted cylinder over any regular energy level in a $\text{GW}_G$-connected symplectic manifold $(W, \Omega)$. Feral pseudoholomorphic curves are always $\omega$-finite, so this is stronger than what we require for Theorem \ref{thm:invariantMeasures}.

Now we present an informal description of the proof of Proposition \ref{prop:gwGConnected}. This proof works with weaker assumptions regarding the Gromov-Witten theory of $W$ than $\text{GW}_G$-connectedness. The exact required condition can be found in \cite[Section $3$]{sequel} and the statement of \cite[Theorem $1.3$]{sequel}.

Suppose that $M = H^{-1}(0)$ is a closed, regular energy level of a smooth Hamiltonian function $H$ on a closed symplectic manifold $(W, \Omega)$. Suppose further that $(W, \Omega)$ has sufficiently rich Gromov-Witten theory, so that there are integers $G \geq 0$ and $m \geq 0$ such that for any tame almost-complex structure $J$, there is a pseudoholomorphic curve
$$\mathbf{u} = (u, C, j, W, J, D, \mu)$$
such that $C$ has genus bounded above by $G$, $\mu$ has at most $m+2$ points, and the image $u(C)$ crosses the hypersurface $M$. 

Starting with a compatible almost-complex structure $J_0$, we degenerate it near the hypersurface $M$ as part of a ``neck-stretching procedure'', producing a one-parameter family of tame almost-complex structures $\{J_L\}_{L \in [0,\infty)}$. For each $L \in [0, \infty)$, Gromov-Witten theory guarantees the existence of a pseudoholomorphic curve
$$\mathbf{u}_L = (u_L, C_L, j_L, W, J_L, D_L, \mu_L).$$

Taking a certain limit of the curves $\mathbf{u}_L$ as $L \to \infty$ produces a feral pseudoholomorphic curve in an $\eta$-adapted cylinder over $M$ as desired. 

While neck stretching techniques are ubiquitous in symplectic geometry, they almost exclusively require $M$ to be a special kind of hypersurface known as a ``contact-type'' hypersurface or a ``stable Hamiltonian'' hypersurface. In our setting, $M$ is not necessarily contact-type or stable Hamiltonian, which presents several technical difficulties. 

First, the sequence of curves $\mathbf{u}_L$ do not necessarily satisfy local area bounds as they would in the case where $M$ is contact-type or stable Hamiltonian (this is referred to as ``bounded Hofer energy'' in the symplectic literature). Namely, for any compact set $K \subset W$, the area of the image of the curve $\mathbf{u}_L$ inside of $K$ may diverge to infinity. 

Second, the number of connected components of the domain $C_L$ for any $L \in [0,\infty)$ may possibly increase without bound. Without some a priori control on the number of connected components of $C_L$, we would be unable to extract our desired feral curve from the family $\mathbf{u}_L$. 

Third, we are only able to guarantee a weak form of convergence, called ``exhaustive Gromov convergence'' (see \cite{FishHoferExhaustive}) of the family $\mathbf{u}_L$ to a pseudoholomorphic curve 
$$\mathbf{u} = (u, C, j, \mathbb{R} \times M, J, D, \mu).$$

We can show without much effort that $\mathbf{u}$ has finite genus and a finite number of marked and nodal points. However, it is not clear from the definition of exhaustive Gromov convergence that the number $\text{Punct}(C)$ of generalized punctures of $C$ is finite, which is required for $\mathbf{u}$ to be feral rather than merely $\omega$-finite. 

The first difficulty is addressed using new analytical results from Fish-Hofer's work \cite{FishHoferFeral} on feral pseudoholomorphic curves. 

The second difficulty requires additional arguments, which show the desired a priori bounds on the number of connected components of $C_L$.

The third difficulty requires delicate arguments like those in the proof of Proposition $4.49$ in \cite{FishHoferFeral}. 

Given these difficulties, the construction is lengthy and technical. Moreover, the rigorous development of the neck stretching technique along arbitrary hypersurfaces in symplectic manifolds may be of independent interest. It is for these reasons that we have deferred the construction to the sequel \cite{sequel}. 

\subsection{Applications}

The combination of Theorem \ref{thm:mainNonExact}, Theorem \ref{thm:mainExact} and Proposition \ref{prop:gwGConnected} give the following broad result regarding the question of unique ergodicity for Hamiltonian vector fields on closed, regular energy levels in closed, $\text{GW}_G$-connected symplectic manifolds. Another application to the question of non-unique ergodicity in symplectic $4$-manifolds, using Seiberg-Witten theory to construct $\omega$-finite pseudoholomorphic curves, is discussed in \cite[Proposition $1.7$]{sequel}.

\begin{prop} \label{prop:mainApplication}
Let $(W, \Omega)$ be a closed symplectic manifold of dimension $2n+2$ that is $\text{GW}_G$-connected for some integer $G \geq 0$. Let 
$$H: W \to \mathbb{R}$$
be any smooth Hamiltonian such that $0$ is a regular value, and set $M = H^{-1}(0)$.

Suppose that one of the following two conditions hold:
\begin{itemize}
    \item The restriction of the two-form $\Omega^n$ is not exact on $M$. 
    \item $\Omega$ is exact on one of the two components of $W \setminus M$.
\end{itemize} 

Then the Hamiltonian vector field $X_H$ on $M$ is not uniquely ergodic. 
\end{prop}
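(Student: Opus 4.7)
The plan is to combine the existence theorem for feral pseudoholomorphic curves, Proposition \ref{prop:gwGConnected}, with Theorem \ref{thm:invariantMeasures}, and then to apply either Theorem \ref{thm:mainNonExact} or Theorem \ref{thm:mainExact} depending on which hypothesis holds, concluding via Corollary \ref{cor:uniqueErgodicity}. Since $(W, \Omega)$ is $\text{GW}_G$-connected and $M = H^{-1}(0)$ is a closed, regular energy level, Proposition \ref{prop:gwGConnected} immediately furnishes an $\eta$-adapted cylinder $(\mathbb{R} \times M, J, g)$ carrying a feral, hence $\omega$-finite, $J$-holomorphic curve. Theorem \ref{thm:invariantMeasures} then produces either a periodic orbit of $X_H$ --- which already supplies an invariant probability measure supported on a proper lower-dimensional subset of $M$, precluding unique ergodicity immediately --- or a nonzero $X_H$-invariant probability measure $\sigma$. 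In the latter case, it suffices to show $\sigma \neq \text{dvol}_\eta$.

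If the first hypothesis of the proposition holds, then $\omega^n = \Omega^n|_M$ is not exact on $M$, so Theorem \ref{thm:mainNonExact} applies directly and gives $\sigma \neq \text{dvol}_\eta$. Suppose instead the second hypothesis holds, and let $W^+$ be a component of $W \setminus M$ on which $\Omega$ admits a primitive $\Theta$. Restriction gives a primitive $\nu := \Theta|_M$ of $\omega = \Omega|_M$, so $\omega$ is exact on $M$. The remaining step needed to invoke Theorem \ref{thm:mainExact} is that $Lk(\omega) \neq 0$. I would verify this by a short Stokes' theorem computation: using $d\Omega = 0$, one has $d(\Theta \wedge \Omega^n) = \Omega^{n+1}$ on $\overline{W^+}$, whence
$$Lk(\omega) = \int_M \nu \wedge \omega^n = \pm \int_{\partial W^+} \Theta \wedge \Omega^n = \pm \int_{W^+} \Omega^{n+1},$$
with the sign depending on the comparison between the framed Hamiltonian orientation of $M$ and the boundary orientation inherited from $W^+$. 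Since $\Omega^{n+1}$ is a nonvanishing top form on $W$ and $W^+$ is a nonempty open set, the right-hand integral is nonzero, so $Lk(\omega) \neq 0$.

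In both cases this yields $\sigma \neq \text{dvol}_\eta$, providing two distinct $X_H$-invariant probability measures and hence non-unique ergodicity by Corollary \ref{cor:uniqueErgodicity}. The only genuine computation beyond citation is the Stokes' step in the second case; the main subtlety worth checking there is that the primitive $\Theta$ of $\Omega|_{W^+}$ can be taken smooth up to the boundary $M$ so that the boundary integral is well-defined, which follows from standard extension arguments given that $\Omega$ itself is smooth on all of $W$. I expect no serious obstacle in carrying out any of these steps --- the substance of the argument has been packaged into the earlier theorems of the paper and its sequel.
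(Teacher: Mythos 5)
Your proposal is correct and follows essentially the same route as the paper: invoke Proposition \ref{prop:gwGConnected} to obtain an $\omega$-finite curve, apply Theorem \ref{thm:invariantMeasures}, use Theorem \ref{thm:mainNonExact} in the non-exact case, and in the exact case compute $Lk(\omega)=\pm\int_{W^\pm}\Omega^{n+1}\neq 0$ by Stokes to invoke Theorem \ref{thm:mainExact}, concluding via Corollary \ref{cor:uniqueErgodicity}. Your added care about the sign and about extending the primitive smoothly to the boundary are minor refinements of the paper's identical argument.
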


\begin{proof}
Let $\eta = (\lambda, \omega)$ denote the framed Hamiltonian structure on $M$ which is constructed as in \ref{exe:hypersurfaces}. Since $(W, \Omega)$ is $\text{GW}_G$-connected for some integer $G \geq 0$, $M$ satisfies the conditions of Theorem \ref{thm:invariantMeasures}. It remains to verify that $M$ satisfies either the conditions of Theorem \ref{thm:mainNonExact} or Theorem \ref{thm:mainExact}. Then the proof is complete by Corollary \ref{cor:uniqueErgodicity}. 

If $\omega^n$ is not exact, then $M$ satisfies the conditions of Theorem \ref{thm:mainNonExact}. 

Now consider the case where $\omega$ is exact. Let $\nu$ be any primitive of $\omega$. To show that $M$ satisfies the conditions of Theorem \ref{thm:mainExact}, we must show that
$$\text{Lk}(\omega) = \int_M \nu \wedge \omega$$
is not zero. 

Observe that $M$ separates $W$ into two components
$$W_+ = H^{-1}([0, \infty))$$
and
$$W_- = H^{-1}((-\infty, 0]).$$

The assumptions of the proposition allow us to assume without loss of generality that $\Omega$ is exact on $W_-$. 

By Stokes' theorem and the fact that $\omega$ is the restriction of the symplectic form $\Omega$ to $M$,
$$\text{Lk}(\omega) = \int_{W_-} \Omega \wedge \Omega > 0.$$

We conclude that if $\omega$ is exact, $M$ satisfies the conditions of Theorem \ref{thm:mainExact} as desired, and therefore the proof is complete. 
\end{proof}

\begin{rem}
In the case where $(W, \Omega)$ is four-dimensional and $\text{GW}_G$-connected for some $G \geq 0$, the result in Proposition \ref{prop:mainApplication} shows that only possible Hamiltonian energy levels $M = H^{-1}(0)$ on which the Hamiltonian vector field could be uniquely ergodic are those on which the restriction of $\Omega$ to $M$ is exact, but $\Omega$ is not exact on either component of $W \setminus M$. It would be interesting to construct an example of such an energy level with uniquely ergodic Hamiltonian vector field, which would show that Proposition \ref{prop:mainApplication} is sharp in four dimensions. 
\end{rem}

\begin{rem}
Proposition \ref{prop:mainApplication} requires Theorem \ref{thm:mainNonExact}, which concerns existence of interesting invariant measures on non-exact framed Hamiltonian manifolds. This is because an arbitrary Hamiltonian energy level $M = H^{-1}(0)$ in a closed symplectic manifold $(W, \Omega)$ may not be such that the restriction of $\Omega$ to $M$ is exact. 

On the other hand, the following Corollary \ref{cor:energySurfaces} only requires Theorem \ref{thm:mainExact}, which concerns existence of interesting invariant measures on exact framed Hamiltonian manifolds and is closely related to results of \cite{GinzburgNiche2015} and \cite{TaubesVectorFields}. 
\end{rem}

We can also recover the following result of \cite{GinzburgNiche2015}, using an argument identical to the one in the proof of Proposition \ref{prop:mainApplication}. 

\begin{cor} \label{cor:energySurfaces}
\cite[Corollary 1.2]{GinzburgNiche2015} Let $M = H^{-1}(0)$ be the zero set of a Hamiltonian function $H$ on $\mathbb{R}^{2n+2}$ for which $0$ is not a critical value. Then the Hamiltonian vector field $X_H$ associated to $H$ and the standard symplectic form on $\mathbb{R}^{2n+2}$ is not uniquely ergodic. 
\end{cor}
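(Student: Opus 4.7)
The plan is to reduce to Proposition~\ref{prop:mainApplication} by symplectically compactifying a large ball of $\mathbb{R}^{2n+2}$ containing $M$ inside $(\mathbb{CP}^{n+1}, c \cdot \Omega_{FS})$, which is $\text{GW}_0$-connected by Example~\ref{exe:CPNisConnected}. Since $M \subset \mathbb{R}^{2n+2}$ is a closed regular hypersurface it is compact, so there is a closed Euclidean ball $B \subset \mathbb{R}^{2n+2}$ with $M \subset \text{int}(B)$.

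First, I would choose $c > 0$ large enough that $(\mathbb{CP}^{n+1}, c \cdot \Omega_{FS})$ admits a Darboux chart of symplectic volume greater than that of $(B, \Omega_0)$, where $\Omega_0 = \sum dx_i \wedge dy_i$ is the standard symplectic form. This yields a symplectic embedding $\iota : (B, \Omega_0) \hookrightarrow (\mathbb{CP}^{n+1}, c \cdot \Omega_{FS})$, and I view $M$ as a smooth hypersurface in $\mathbb{CP}^{n+1}$ on a neighborhood of which the ambient symplectic form pulls back to $\Omega_0$ via $\iota$. Next, I would extend $H$ to $\tilde H : \mathbb{CP}^{n+1} \to \mathbb{R}$ by a cutoff argument: leave $H \circ \iota^{-1}$ unchanged on a neighborhood of $\iota(M)$, interpolate to a nonzero constant on an annular region inside $\iota(B)$ away from $M$, and extend by that constant on $\mathbb{CP}^{n+1} \setminus \iota(B)$. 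Using crucially that $H$ has no zeros in $B \setminus M$, this produces $\tilde H$ with $\tilde H^{-1}(0) = \iota(M)$ and $0$ a regular value.

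To apply Proposition~\ref{prop:mainApplication}, the second hypothesis must be checked: $c \cdot \Omega_{FS}$ is exact on one of the two components of $\mathbb{CP}^{n+1} \setminus \iota(M)$. The bounded component $U_-$ of $\mathbb{R}^{2n+2} \setminus M$ is contained in $B$, and its image $\iota(U_-) \subset \iota(B)$ is a component of $\mathbb{CP}^{n+1} \setminus \iota(M)$ on which the symplectic form pulls back to $\Omega_0 = d\alpha$ for $\alpha = \tfrac{1}{2}\sum(x_i\, dy_i - y_i\, dx_i)$; hence $c \cdot \Omega_{FS}$ restricted to $\iota(U_-)$ is exact. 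Proposition~\ref{prop:mainApplication} then implies that $X_{\tilde H}$ on $\iota(M)$ is not uniquely ergodic. Since $\tilde H$ and the ambient symplectic form agree with $H$ and $\Omega_0$ respectively in a neighborhood of $\iota(M)$, the Hamiltonian vector fields satisfy $X_{\tilde H}|_{\iota(M)} = \iota_* X_H|_M$; therefore $X_H$ on $M$ is not uniquely ergodic either.

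The only non-formal ingredient is the symplectic compactification, namely the existence of a symplectic embedding of a ball large enough to contain $M$ into $\mathbb{CP}^{n+1}$. This is classical, realized by a standard Darboux chart after choosing the scaling $c$ of $\Omega_{FS}$ appropriately. Every other step is either a cutoff construction or a direct invocation of Proposition~\ref{prop:mainApplication}, so the main work of the corollary has already been done in proving that proposition.
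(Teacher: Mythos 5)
Your argument is essentially the paper's own proof: embed a neighborhood of $M$ symplectically into complex projective space (you rescale $\Omega_{\text{FS}}$, the paper instead rescales the hypersurface --- a cosmetic difference), cut off $H$ to a Hamiltonian $\tilde H$ with $\tilde H^{-1}(0) = \iota(M)$, observe that the symplectic form is exact on the image of the bounded component, and invoke Proposition \ref{prop:mainApplication} via $\text{GW}_0$-connectedness of projective space. The proposal is correct (and, incidentally, uses the dimensionally correct target $\mathbb{CP}^{n+1}$, where the paper's text writes $\mathbb{CP}^n$).
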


\begin{proof}
Let $M = H^{-1}(0)$ be a closed, regular energy level of a smooth Hamiltonian function $H$ on $\mathbb{R}^{2n+2}$. 

Via a rescaling procedure like in the proof of Theorem $1$ in \cite{FishHoferFeral}, we can assume that there is an open neighborhood of the origin $U$ in $\mathbb{R}^{2n+2}$ containing $M$ and a symplectic embedding 
$$\iota: (U, \Omega_0) \hookrightarrow (\mathbb{CP}^n, \Omega_{\text{FS}}).$$

Here $\Omega_0$ denotes the standard symplectic form on $\mathbb{R}^{2n+2}$, and $\Omega_{\text{FS}}$ denotes the standard Fubini-Study symplectic form on $\mathbb{CP}^n$. 

Define a smooth Hamiltonian function $\overline{H}$ on $\mathbb{CP}^n$ that agrees with $H \circ \iota^{-1}$ in a neighborhood of $\iota(M)$ with closure contained in $\iota(U)$. 
Therefore, we find $\iota(M) = \overline{H}^{-1}(0)$ and the Hamiltonian vector field $X_{\overline{H}}$ on $\iota(M)$ coincides with the pushforward $\iota_*X_H$ of the Hamiltonian vector field $X_H$ on $M$. It follows that if $X_{\overline{H}}$ is not uniquely ergodic, then $X_H$ is not uniquely ergodic. 

Recall from Example \ref{exe:CPNisConnected} that $\mathbb{CP}^n$ is $\text{GW}_0$-connected. The restriction of $\Omega_{\text{FS}}$ to $\iota(U)$ is exact. Note that $\iota(U)$ by definition contains one of the two components of $\mathbb{CP}^2 \setminus \iota(M)$. 

By Proposition \ref{prop:gwGConnected}, the symplectic manifold $(\mathbb{CP}^2, \Omega_{\text{FS}})$ and the hypersurface $\iota(M) = \overline{H}^{-1}(0)$ satisfy the conditions of Proposition \ref{prop:mainApplication}. The result follows from Proposition \ref{prop:mainApplication}.
\end{proof}

\begin{rem}
Theorem $1.1$ of \cite{TaubesVectorFields} shows that, for any exact, $3$-dimensional framed Hamiltonian manifold $M$, as long as the obstruction $\text{Lk}(\omega)$ does not vanish, then the Hamiltonian vector field is not uniquely ergodic. 

It is expected that Seiberg-Witten theory will allow one to construct an $\omega$-finite pseudoholomorphic curve in $\mathbb{R} \times M$, showing that any exact, $3$-dimensional framed Hamiltonian manifold satisfies the conditions of Theorem \ref{thm:invariantMeasures}. Then Corollary \ref{cor:uniqueErgodicity} would imply the main theorem of \cite{TaubesVectorFields} for all exact framed Hamiltonian manifolds. This construction is work in progress by the author.
\end{rem}

\subsection{Background on pseudoholomorphic curves and Hamiltonian dynamics} \label{subsec:history}

In this subsection, we give a quick overview of applications of pseudoholomorphic curves to problems in Hamiltonian dynamics, and place our present work in the context of this story. 

The work most relevant to our setting is Hofer's work \cite{HoferWeinstein}. Given a contact manifold $M$ equipped with a contact form and associated Reeb vector field $X$, the manifold $\mathbb{R} \times M$ has a natural symplectic structure, and moreover can be equipped with a translation-invariant almost-complex structure $J$ that incorporates the vector field $X$. Hofer showed that, in this setting, $J$-holomorphic curves satisfying a certain finite energy condition limit to periodic orbits of the Reeb vector field. 

One way to describe this finite energy condition is as follows. Let $u: C \to \mathbb{R} \times M$ be a proper $J$-holomorphic map in $\mathbb{R} \times M$, where $C$ is some non-compact Riemann surface equipped with an almost-complex structure which we suppress. Then for any $t \in \mathbb{R}$ outside of a set of Lebesgue measure zero, the intersection
$$C \cap \{t\} \times M$$
is a set of closed, embedded curves inside $S$, colloquially referred to as a ``level set'' of the curve $S$. Then Hofer's finite energy condition implies that there is some \emph{uniform bound} on the lengths of the level sets, where length is measured by the choice of a natural cylindrical metric on $\mathbb{R} \times M$ that is preserved by $J$. 

Then, if $u$ satisfies Hofer's finite energy condition, the following holds. For any generic increasing sequence $t_k \to \infty$ consider the sequence of level sets
$$C \cap \{t_k\} \times M.$$

Then, after possibly passing to a subsequence, there is a collection of periodic orbits $P \subset M$ such that the sets of loops $C \cap \{t_k\} \times M$ approach $P$ in the Hausdorff distance:

$$\lim_{k \to \infty}(\sup_{x \in  \cap \{t_k\} \times M} \text{dist}(x, P) + \sup_{y \in P} \text{dist}(y, C \cap \{t_k\} \times M)) = 0.$$

We illustrate this convergence in Figure \ref{fig:finiteEnergyCurve} below. 

\begin{figure}[ht]  \includegraphics[scale=.25]{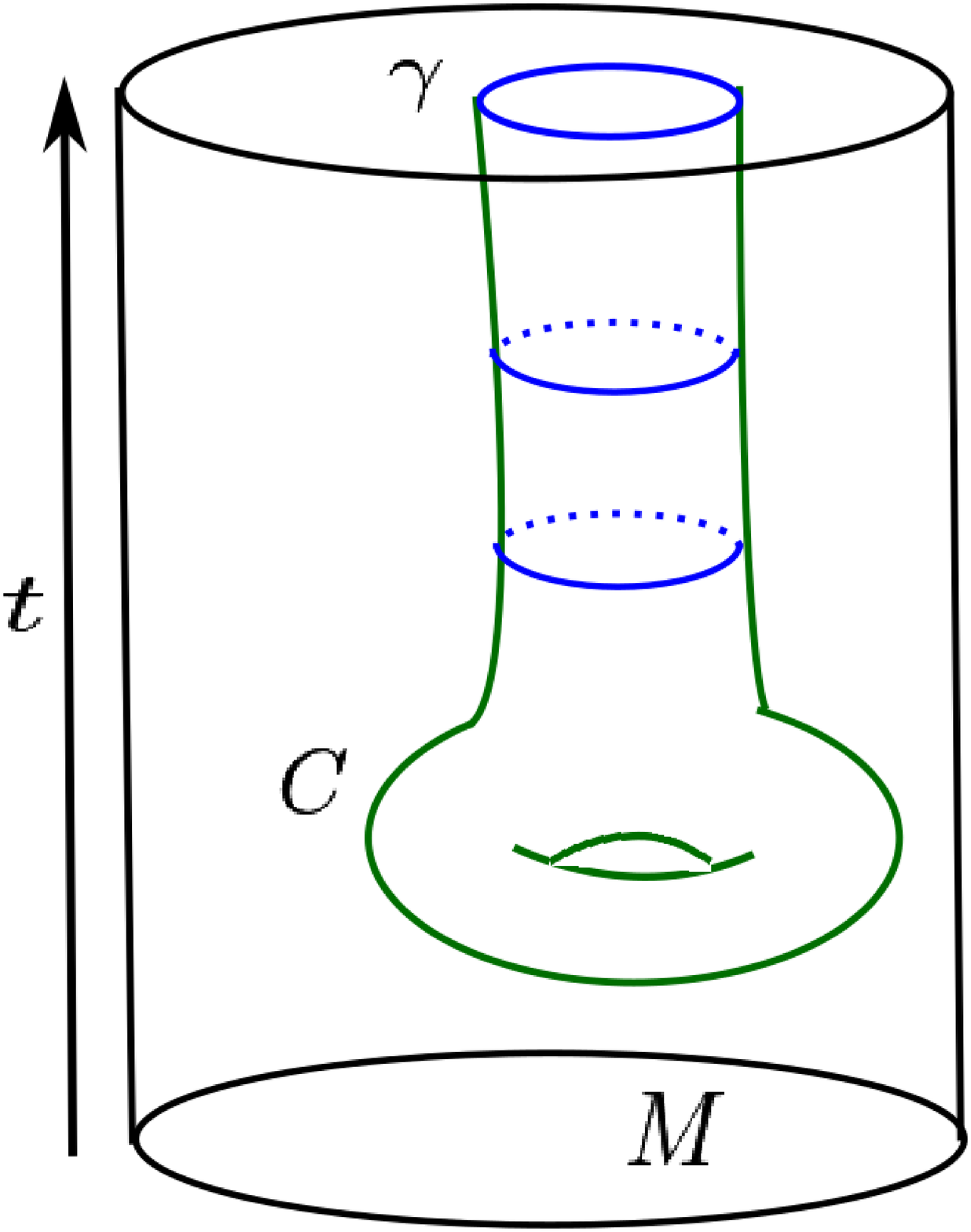} \caption{\label{fig:finiteEnergyCurve} The level loops (blue) of the curve $C$ (green) converge to the orbit $\gamma$ as the $t$-coordinate goes to infinity.} \end{figure}

This asymptotic result showed that, in order to detect a periodic orbit of the Reeb vector field, one needs only to prove the existence of such a pseudoholomorphic curve. At first glance, it is not clear that this makes the problem any easier.

However, Hofer was in the same work able to use, in certain situations, local existence results for holomorphic disks and a bubbling off argument to produce such curves. Concretely, this resulted in a proof of the Weinstein conjecture in dimension three for contact manifolds that are overtwisted, have non-zero second homotopy group, or diffeomorphic to $S^3$. 

However, until only recently, applying pseudoholomorphic curve techniques to Hamiltonian flows more general than Reeb flows (or stable Hamiltonian flows) seemed untenable. This is because, outside of the contact/stable Hamiltonian setting, standard techniques used to produce pseudoholomorphic curves do not necessarily produce curves satisfying Hofer's finite energy condition. 

Such curves can a priori have rather wild level sets that have unbounded length, and as a result are not necessarily guaranteed to converge to a periodic orbit, as illustrated in Figure \ref{fig:infiniteEnergyCurve}. 

\begin{figure}[ht]  \includegraphics[scale=.25]{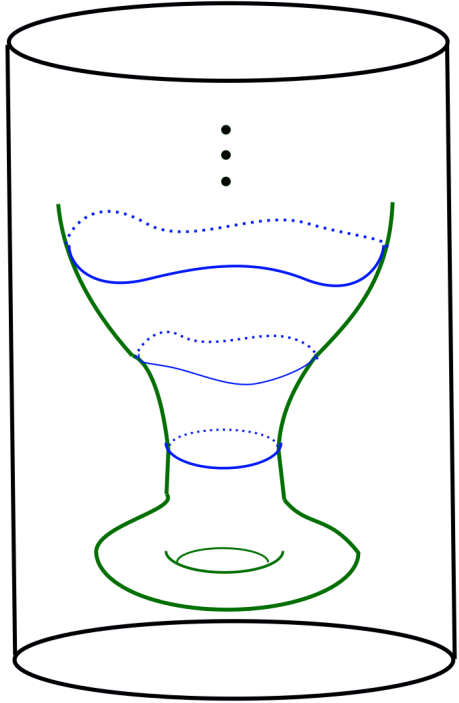} \caption{\label{fig:infiniteEnergyCurve} An infinite energy $J$-holomorphic curve (green). The level sets are shown in blue, analogously to Figure \ref{fig:finiteEnergyCurve}}. \end{figure}

Moreover, the analysis of pseudoholomorphic curves that do not satisfy Hofer's finite energy condition was considered unmanageable (see the introduction of \cite{FishHoferFeral} for an exhaustive discussion of issues in this setting).

However, recent work of Fish-Hofer \cite{FishHoferFeral} has shown that it is still possible to prove interesting results about the dynamics of Hamiltonian vector fields using a new class of infinite energy pseudoholomorphic curves they term ``feral curves''. While these pseudoholomorphic curves may not limit to periodic orbits, they do limit in a certain sense to \emph{closed invariant subsets} of the vector field that may be more general than periodic orbits. 

The pseudoholomorphic curves we use in our work are more general than feral curves. The invariant measures we construct can be regarded as ``limit sets'' of our pseudoholomorphic curves. 

Qualitatively, this is done as follows. Choose again a sequence of level sets
$$C \cap \{t_k\} \times M$$
with $t_k \to \infty$ as before. The $\omega$-finiteness of $C$ implies that, in a certain weak sense, the tangent lines of these level sets are asymptotic to the line bundle spanned by the Hamiltonian vector field $X$ as $k \to \infty$.

Now note that any set of closed, one-dimensional loops in a manifold equipped with a measure can be considered as Radon measures, merely by defining the Radon measure by integration over this set of loops. Therefore, any level set of a $\omega$-finite curve can be thought of as a Radon measure. If we take a sequence of level sets
$$C \cap \{t_k\} \times M$$
with $t_k \to \infty$ as before, we obtain a sequence of Radon measures on $M$. 

However, the norms of these Radon measures could a priori unbounded since our curve does not have finite Hofer energy. To remedy this, we re-normalize all of the Radon measures to get a sequence $\sigma_k$ of Radon measures of norm $1$. Then the compactness of the weak unit ball of the space of Radon measures (see Theorem \ref{thm:radonMeasureCompactness}) shows that we can extract a weak limit $\sigma$. Moreover, the fact that the tangent lines of the level sets approach the Hamiltonian vector field allows us to show that $\sigma$ is actually invariant.

\subsection{Roadmap}

The rest of the paper is organized as follows.

In Section \ref{sec:proofs1} we prove Theorem \ref{thm:invariantMeasures}. We begin in Section \ref{subsec:boundedLambdaEnergy} by proving that existence of an $\omega$-finite curve
$$\mathbf{u} = (u, C, j, \mathbb{R} \times M, J, D, \mu)$$
in an $\eta$-adapted cylinder over a framed Hamiltonian manifold $(M, \eta)$ with \emph{bounded $\lambda$-energy} implies the existence of a periodic orbit of the Hamiltonian vector field $X$. The proof of this statement is straightforward in the case where the domain has finite genus and a finite number of ends, but the proof in the case where we do not make these assumptions requires a lengthy digression into the compactness and regularity theory of $J$-holomorphic currents in almost-complex manifolds. In Section \ref{subsec:unboundedLambdaEnergy}, we complete the proof of Theorem \ref{thm:invariantMeasures} by producing an invariant probability measure from the level sets of an $\omega$-finite curve
$$\mathbf{u} = (u, C, j, \mathbb{R} \times M, J, D, \mu).$$

 In Section \ref{sec:proofs2}, we prove Theorems \ref{thm:mainNonExact}, \ref{thm:mainExact}, and \ref{thm:confoliations} using Theorem \ref{thm:invariantMeasures}. 
 
 In Appendix \ref{sec:measureTheory}, we write down supporting measure-theoretic definitions and results. 
 
 In Appendix \ref{sec:currents}, we discuss the theory of pseudoholomorphic currents in almost-Hermitian manifolds and use it to prove Proposition \ref{prop:periodicOrbits}, which is an essential part of the proof of Theorem \ref{thm:invariantMeasures}. 
 
 In Appendix \ref{sec:weakerNonExact} we prove a weaker version of Corollary \ref{cor:uniqueErgodicity} using alternate pseudoholomorphic curve-based techniques.

\textbf{Acknowledgements:}  I would like to thank my advisor, Helmut Hofer, for his encouragement, discussions regarding the work, and numerous suggestions which greatly clarified the exposition. I would also like to thank Joel Fish for enlightening discussions regarding pseudoholomorphic curves, and Clifford Taubes for originally pointing me to his paper on uniquely ergodic vector fields. I would also like to thank Camillo De Lellis and Vikram Giri for answering some questions I had regarding geometric measure theory and the regularity of semicalibrated currents. 

\section{Proof of Theorem \ref{thm:invariantMeasures}} \label{sec:proofs1}

Fix a framed Hamiltonian manifold $(M^{2n+1}, \eta= (\lambda, \omega))$ and an $\eta$-adapted cylinder $(\mathbb{R} \times M, J, g)$ as in Definition \ref{defn:adaptedCylinder}. 

Let 
$$\mathbf{u} = (u, C, j, \mathbb{R} \times M, J, D, \mu)$$
be an $\omega$-finite curve, whose existence is assumed in the statement of Theorem \ref{thm:invariantMeasures}. 

\subsection{Bounded $\lambda$-energy implies existence of a periodic orbit} \label{subsec:boundedLambdaEnergy} First, we observe that either $X$ has a periodic orbit or the $\lambda$-energy of $u$ is not uniformly bounded on any unbounded sequence $t_k$ of regular values of $a \circ u$.

This is stated as the following proposition.

\begin{prop} \label{prop:periodicOrbits}
Suppose that there is an unbounded sequence $t_k$ of real numbers in $\mathcal{R}$ such that $u(C) \cap \{t_k\} \times M$ is nonempty and $E_\lambda(u, t_k)$ is uniformly bounded in $k$. Then $X$ has a periodic orbit.
\end{prop}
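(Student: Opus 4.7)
The plan is to exploit the translation invariance of $J$. Translating $\mathbf{u}$ by $-t_k$ in the $\mathbb{R}$-factor yields a sequence $\mathbf{u}_k$ of $\omega$-finite $J$-holomorphic curves whose images all meet $\{0\} \times M$. I would extract from $\mathbf{u}_k$ a subsequential limit $J$-holomorphic current $T$ on $\mathbb{R} \times M$, show that $T$ has zero $\omega$-energy, and then invoke a regularity result to conclude that $\operatorname{supp}(T)$ is a finite union of cylinders $\mathbb{R} \times \gamma_i$ with $\gamma_i$ closed orbits of $X$, producing the required periodic orbit.

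First, I would establish uniform local mass bounds for $\mathbf{u}_k$. For any $J$-holomorphic map, the pullback of $da \wedge \lambda + \omega$ to the domain equals the area form induced by the almost-Hermitian metric $g$. On a compact slab $[-L, L] \times M$, the $\omega$-contribution to the area of $\mathbf{u}_k$ is bounded by $E_\omega(u) < \infty$, and the $\lambda$-contribution equals $\int_{t_k - L}^{t_k + L} E_\lambda(u, t) \, dt$. A coarea/monotonicity argument, together with the hypothesized uniform bound on $E_\lambda(u, t_k)$ and the finiteness of $E_\omega(u)$, should control the latter integral uniformly (possibly after slight perturbations of the slicing levels, which remain in $\mathcal{R}$ by Sard). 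The compactness theory for $J$-holomorphic currents developed in Appendix \ref{sec:currents} then provides, after passing to a subsequence, a weak limit $J$-holomorphic current $T$ with locally finite mass. The current $T$ is nonzero because each $\mathbf{u}_k$ meets $\{0\} \times M$.

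Next I would show $T$ has zero $\omega$-energy. The $\omega$-energy of $\mathbf{u}_k$ over $[-L, L] \times M$ equals that of $\mathbf{u}$ over $[t_k - L, t_k + L] \times M$. Since $u$ is proper, $u^*\omega$ is nonnegative by Lemma \ref{lem:omegaNonnegative}, and $E_\omega(u) < \infty$, the mass of $u^*\omega$ on the preimage of any bounded $a$-slab going off to infinity tends to zero; hence $T$ pairs to zero with $\omega$ on every compact set. By the current analogue of Lemma \ref{lem:omegaNonnegative}, the $J$-invariant approximate tangent planes of $T$ must annihilate $\omega$. Since $J$ preserves $\xi$ and $J|_\xi$ is $\omega$-compatible (and hence $\omega$-positive on $J$-invariant 2-planes inside $\xi$), the only $J$-invariant 2-planes on which $\omega$ vanishes are spanned by $\partial_a$ and $X$. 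Consequently, $\operatorname{supp}(T)$ lies in the union of cylinders $\mathbb{R} \times \gamma$ over trajectories $\gamma$ of $X$. The hypothesis in Definition \ref{defn:omegaFiniteCurve} that $u(C)$ has finitely many connected components is what rules out uncontrolled accumulation of distinct trajectories in the limit, so $T$ is a finite sum of such cylinders (with positive integer multiplicities). Finite local mass of $T$ on the slice $\{0\} \times M$ then forces at least one of the $\gamma_i$ to be a periodic orbit, since a non-closed trajectory of a non-vanishing vector field on a closed manifold cannot yield a cylinder of finite local mass.

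The main obstacle is the regularity statement invoked in the second step: classifying zero-$\omega$-energy $J$-holomorphic currents in an adapted cylinder as weighted unions of orbit-cylinders. This is a geometric-measure-theoretic statement about semicalibrated integer-rectifiable currents, and it is precisely why the proof of this proposition is deferred to the lengthy theory of pseudoholomorphic currents in Appendix \ref{sec:currents}. A secondary subtlety lies in verifying that ``finitely many components of $u(C)$'' transfers through the limit to a finite decomposition of $\operatorname{supp}(T)$, which is the source of the otherwise unnatural second bullet in Definition \ref{defn:omegaFiniteCurve}.
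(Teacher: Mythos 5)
Your proposal follows essentially the same route as the paper's proof in Appendix \ref{sec:currents}: translate by $-t_k$, obtain uniform local area bounds from the single-level $\lambda$-energy bound (the paper quotes the Fish--Hofer exponential area estimate, Theorem \ref{thm:exponentialAreaBoundsBasic}, for exactly this step), extract a limiting closed integer rectifiable $J$-holomorphic current by Federer--Fleming, use the vanishing $\omega$-energy of the translates to force the support onto orbit cylinders, invoke the De Lellis--Spadaro--Spolaor regularity theorem, and exclude non-periodic trajectories via the finite local mass of $T$. The only divergences are minor: the paper works on a thin slab $(-\epsilon,\epsilon)\times M$ and establishes translation- and flow-invariance of $\operatorname{supp}(T)$ by testing against $\iota_{\partial_a}d\alpha$ and $\iota_X d\alpha$ rather than through approximate tangent planes, it needs the monotonicity formula (not merely nonemptiness of $u(C)\cap\{t_k\}\times M$) to see that $T\neq 0$, and the finitely-many-components hypothesis actually enters to control $u(\partial C_k)$ and guarantee a component crossing the whole slab, not to make the limit a \emph{finite} union of cylinders --- a single orbit cylinder suffices for the conclusion.
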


Proposition \ref{prop:periodicOrbits} is straightforward in the case where the domain $C$ has finite genus, a finite number of ends, and a finite number of marked and nodal points. We assume none of these conditions, so there are additional complications and we must appeal to the theory of $J$-holomorphic currents to prove Proposition \ref{prop:periodicOrbits}. 

The proof may be of independent interest, but it requires a lengthy digression into the regularity and compactness theory of $J$-holomorphic currents in almost-Hermitian manifolds. Therefore, we direct the reader to Appendix \ref{sec:currents} for the proof. 

\subsection{Unbounded $\lambda$-energy permits construction of an invariant measure} \label{subsec:unboundedLambdaEnergy} 

Given Proposition \ref{prop:periodicOrbits}, we can proceed with the second part of the proof of Theorem \ref{thm:invariantMeasures} and construct the invariant measure $\sigma$ in the case where the $\lambda$-energy of $u$ is unbounded. 

In the subsequent arguments, we will often be integrating over level loops of the form $(a \circ u)^{-1}(t) \subset C$, for $t$ a regular value of the smooth function $a \circ u$. For any loop in $C$, we also introduce the notation $\mu^1_{u^*g}$ to denote the one-dimensional Hausdorff measure on the loop given by the volume form of the restriction of $u^*g$ to that loop. 

The following lemma is useful in the analysis below. Write $\nabla(a \circ u)$ for the gradient of the function $a \circ u$ with respect to the pullback metric $u^*g$, wherever this is well-defined.

Recall $\mathcal{R}$ is the set of regular values of the real-valued map $a \circ u$.

\begin{lem}[Non-negativity of $\lambda$] \label{lem:nonNegativityLambda}
 Let $\rho = \nabla(a \circ u)/|\nabla(a \circ u)|_{u^*g}$ wherever this is well-defined and $\tau = j\rho$. Then $u^*\lambda(\tau) \geq 0$. Moreover, the one-form $u^*\lambda$ is non-negative on the tangent lines of the loops $(a \circ u)^{-1}(t)$ for any $t \in \mathcal{R}$.
\end{lem}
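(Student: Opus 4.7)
The plan is to unwind the definitions using the Cauchy--Riemann equation and the three defining properties of an $\eta$-adapted almost-complex structure. The key input is that on the Riemann surface $(C,j)$ we have $du \circ j = J \circ du$, so $du(\tau) = du(j\rho) = J\, du(\rho)$. Both claims will then follow from a single one-line computation after decomposing $du(\rho)$ along the splitting $T(\mathbb{R}\times M) = \langle \partial_a \rangle \oplus \langle X\rangle \oplus \xi$.

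Concretely, first I would write $du(\rho) = \alpha\,\partial_a + \beta X + V$ with $V \in \xi$. Since $\rho$ is the unit vector in the direction of $\nabla(a \circ u)$, the component $\alpha$ is characterized by
\[
\alpha = da(du(\rho)) = d(a \circ u)(\rho) = |\nabla(a \circ u)|_{u^*g} \geq 0.
\]
Then I apply $J$, using $J(\partial_a) = X$, $J(X) = -\partial_a$, and the fact that $J$ preserves $\xi$ (Definition \ref{defn:adaptedJ}), to obtain
\[
du(\tau) = J\,du(\rho) = -\beta\,\partial_a + \alpha\,X + JV.
\]
Evaluating $\lambda$ (recalling $\lambda(\partial_a) = 0$, $\lambda(X) = 1$, $\lambda|_\xi = 0$) gives $u^*\lambda(\tau) = \lambda(du(\tau)) = \alpha = |\nabla(a \circ u)|_{u^*g} \geq 0$, which proves the first assertion.

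For the second assertion, fix $t \in \mathcal{R}$. Then $\nabla(a \circ u)$ is nowhere zero along $(a \circ u)^{-1}(t)$, so $\rho$ is defined on this loop and $\tau = j\rho$ is a unit tangent vector field to the loop. Moreover, $\tau$ induces the natural orientation on $(a \circ u)^{-1}(t)$ as a boundary in $(C,j)$: the frame $(\rho, \tau) = (\rho, j\rho)$ is positively oriented with respect to the orientation on $C$ induced by $j$, and level sets of $a \circ u$ are oriented so that the outward gradient $\rho$ followed by the boundary tangent is positive. The positively-oriented tangent line at any point of the loop is therefore spanned by $\tau$, and the first part gives $u^*\lambda(\tau) \geq 0$ there.

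There is no real obstacle here: the only mild subtlety is the orientation convention for the level loops, which one should state explicitly so that the phrase ``non-negative on the tangent lines'' is unambiguous. Everything else is a direct algebraic consequence of the adaptedness of $J$ and the Cauchy--Riemann equation, and requires no hard analysis.
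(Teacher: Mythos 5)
Your proof is correct and follows essentially the same route as the paper: both reduce the first claim to the identity $\lambda(J\,du(\rho)) = da(du(\rho)) = |\nabla(a\circ u)|_{u^*g}$ via the Cauchy--Riemann equation and the adaptedness of $J$ (the paper invokes $da = \lambda\circ J$ directly where you verify it by decomposing along $\langle\partial_a\rangle\oplus\langle X\rangle\oplus\xi$), and both derive the second claim from the orientation convention that $\tau = j\rho$ spans the positively-oriented tangent line of the level loop.
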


\begin{proof}
Since $u$ is $J$-holomorphic,
\begin{align}
    u^*\lambda(\tau) &= \lambda(du(\tau)) \nonumber\\
    &= \lambda(du(j\rho)) \nonumber \\
    &= \lambda(J(du(\rho))) \tag{\textbullet} \label{eq:nonNegativityLambda1}\\
    &= da(du(\rho)) \tag{\textbullet\textbullet} \label{eq:nonNegativityLambda2}\\
    &= d(a \circ u)(\rho) \nonumber \\
    &= |\nabla(a \circ u)|_{u^*g} \nonumber\\
    &\geq 0. \nonumber
\end{align}

The equality (\ref{eq:nonNegativityLambda1}) follows from the Cauchy-Riemann equation
$$J \circ du = du \circ j$$
and the equality (\ref{eq:nonNegativityLambda2}) follows from the fact that $da = \lambda \circ J$, which is a direct consequence of the fact that $J$ is $\eta$-adapted (see Definition \ref{defn:adaptedJ}). 

The second conclusion of the lemma follows from the fact that, owing to the orientation conventions, one has that $u^*\lambda$ is, as a measure, a non-negative multiple of the measure
$$u^*\lambda(\tau)\mu^1_{u^*g}$$
on any regular level loop $(a \circ u)^{-1}(t)$. 
\end{proof}

Since $J$ is $\eta$-adapted, the tangent plane in $\mathbb{R} \times M$ spanned by $\partial_a$ and $X$ is preserved by $J$. Moreover (see Lemma \ref{lem:omegaNonnegative}), the two-form $\omega$ is non-negative on any tangent plane preserved by $J$, and zero exactly on the tangent plane spanned by $\partial_a$ and $X$. 

Furthermore, recall that the $\omega$-energy
$$E_\omega(u) = \int_C u^*\omega$$
is finite. It follows that asymptotically, the tangent planes of $u(C)$ should approach the span of $\partial_a$ and $X$, since any tangent plane which is not equal to this span ``consumes'' some $\omega$-energy.

This is made rigorous in the following weak-type estimate, proved by Fish and Hofer in \cite{FishHoferFeral}. The proof of this lemma makes no assumptions about the pseudoholomorphic curve $\mathbf{u}$ beyond the fact that the pseudoholomorphic map $u$ is proper and that $E_\omega(u)$ is finite.

\begin{lem} \cite[Lemma $4.27$]{FishHoferFeral} \label{lem:almostVertical}
Pick constants $0 < \theta < 1$, $\epsilon > 0$. Then, the Lebesgue measure of the set of real numbers
$$\mathcal{S}_{\theta, \epsilon} = \{t \in \mathcal{R}\,|\,\mu^1_{u^*g}(\{p \in (a \circ u)^{-1}(t)\,|\,|u^*\lambda|_{u^*g} < \theta\}) > \epsilon\}$$
is bounded above by 
$$(\epsilon(1-\theta^2))^{-1}E_\omega(u) < \infty.$$
\end{lem}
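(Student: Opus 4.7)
The plan is to derive a pointwise algebraic identity expressing $u^*\omega$ in terms of $|u^*\lambda|_{u^*g}$, use it together with the finiteness of $E_\omega(u)$ to bound the area of the ``near-vertical'' set $A_\theta = \{p \in C : |u^*\lambda|_{u^*g}(p) < \theta\}$, and then combine the coarea formula with Markov's inequality to control the Lebesgue measure of $\mathcal{S}_{\theta,\epsilon}$.

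First I would work at a point $p \in C$ where $u$ is immersed and $a \circ u$ has nonzero gradient, and pick an orthonormal frame $\{\rho, \tau = j\rho\}$ of $T_pC$ with respect to $u^*g$ with $\rho$ aligned with $\nabla(a \circ u)$. The $\eta$-adaptedness relations $J(\partial_a) = X$, $J(X) = -\partial_a$ and the $J$-invariance of $\xi$ imply $\lambda \circ J = da$ on $\mathbb{R} \times M$; combining this with the Cauchy--Riemann equation yields $u^*\lambda \circ j = d(a \circ u)$. Evaluating on $\rho$ and $\tau$ gives $u^*\lambda(\tau) = |\nabla(a \circ u)|_{u^*g}$ and $u^*\lambda(\rho) = 0$, so in particular $|u^*\lambda|_{u^*g} = |\nabla(a \circ u)|_{u^*g}$. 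The almost-Hermitian identity $g(V,V) = da(V)^2 + \lambda(V)^2 + \omega(V, JV)$, applied to $V = du(\rho)$, then forces $u^*\omega(\rho,\tau) = 1 - |u^*\lambda|^2_{u^*g}$. In other words, on the immersed locus one has the $2$-form identity
$$u^*\omega = (1 - |u^*\lambda|^2_{u^*g}) \, d\text{vol}_{u^*g}.$$

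Second, integrating this identity over $A_\theta$, using $|u^*\lambda|_{u^*g} < \theta$ on $A_\theta$ and Lemma \ref{lem:omegaNonnegative}, gives
$$(1 - \theta^2)\,\text{Area}(A_\theta) \le \int_{A_\theta} u^*\omega \le E_\omega(u),$$
so $\text{Area}(A_\theta) \le E_\omega(u)/(1-\theta^2)$. Third, I would apply the coarea formula to the smooth proper function $a \circ u$ restricted to $A_\theta$, obtaining
$$\int_\mathbb{R} \mu^1_{u^*g}\bigl(A_\theta \cap (a \circ u)^{-1}(t)\bigr)\, dt = \int_{A_\theta} |\nabla(a \circ u)|_{u^*g} \, d\text{vol}_{u^*g} = \int_{A_\theta} |u^*\lambda|_{u^*g} \, d\text{vol}_{u^*g} \le \theta\, \text{Area}(A_\theta) \le \frac{\theta\, E_\omega(u)}{1-\theta^2}.$$
Markov's inequality applied to $t \mapsto \mu^1_{u^*g}(A_\theta \cap (a \circ u)^{-1}(t))$ then gives $\epsilon\,|\mathcal{S}_{\theta,\epsilon}| \le \theta\, E_\omega(u)/(1-\theta^2) \le E_\omega(u)/(1-\theta^2)$, where the last inequality uses $\theta < 1$. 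This is precisely the claimed bound.

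The main obstacle is technical rather than conceptual: the pointwise identity $u^*\omega = (1 - |u^*\lambda|^2_{u^*g})\, d\text{vol}_{u^*g}$ requires $u$ to be an immersion, whereas a priori $u$ may have critical points where $u^*g$ degenerates. However, the critical set is closed and of two-dimensional measure zero, it contributes nothing to $\mu^1_{u^*g}$ on regular level loops of $a \circ u$, and the coarea formula continues to hold in the generalized Hausdorff-measure sense; Sard's theorem ensures that $\mathcal{R}$ has full Lebesgue measure in $\mathbb{R}$, so restricting to regular values causes no loss in the final estimate.
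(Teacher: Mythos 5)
Your argument is correct, and it is essentially the standard proof of this estimate: the paper itself does not prove Lemma \ref{lem:almostVertical} but imports it from \cite[Lemma 4.27]{FishHoferFeral}, whose proof rests on exactly the pointwise identity $u^*\omega = (1-|u^*\lambda|^2_{u^*g})\,d\mathrm{vol}_{u^*g}$ (the same computation, via $\lambda\circ J = da$ and $g(V,V)=da(V)^2+\lambda(V)^2+\omega(V,JV)$, that underlies the paper's Lemma \ref{lem:nonNegativityLambda}) combined with the coarea formula and Chebyshev's inequality. Your handling of the critical locus of $u$ is adequate, since critical points of $u$ are critical points of $a\circ u$ and hence avoided by the regular level sets indexed by $\mathcal{R}$, and they contribute nothing to the area or coarea integrals; note also that your bound carries an extra factor of $\theta$ and is therefore slightly sharper than the one stated.
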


This lemma is used for the following crucial technical estimate. 

\begin{lem} \label{lem:niceOneForms}
Pick two sequences of real numbers $\theta_k \to 1$ and $\epsilon_k \to 0$. Pick an additional sequence $t_k \to \infty$ such that the following holds for every $k$:
\begin{enumerate}
    \item $t_k \in \mathcal{R}$,
    \item $(a \circ u)^{-1}(t_k)$ does not intersect the boundary $\partial C$ of $C$,
    \item For every $k$, $t_k \not\in \mathcal{S}_{\theta_k, \epsilon_k}$, where $\mathcal{S}_{\theta_k, \epsilon_k}$ is the set defined in Lemma \ref{lem:almostVertical}.
\end{enumerate}

Then for any one-form $\beta$ on $M$ such that
$$\beta(X) \equiv 0,$$
there is a sequence of constants $c_k$ decaying to zero as $k \to \infty$ depending only on $\theta_k$ and $\epsilon_k$ such that for every $k$,
$$|\int_{(a \circ u)^{-1}(t_k)} u^*\beta| \leq c_k(E_\lambda(u, t_k) + 1)\|\beta\|_g.$$
\end{lem}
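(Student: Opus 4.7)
The plan is to use the $g$-orthogonal splitting $T(\mathbb{R} \times M) = \langle \partial_a\rangle \oplus \langle X\rangle \oplus \xi$ and write $du(\rho) = A\partial_a + BX + V$ with $V \in \xi$. By the Cauchy-Riemann equation $du \circ j = J \circ du$ together with the adaptedness identities $J\partial_a = X$, $JX = -\partial_a$, and $J\xi \subseteq \xi$, one obtains
\[
du(\tau) = J(du(\rho)) = -B\partial_a + AX + JV.
\]
The crucial observation—and the main obstacle to overcome—is that on the regular level set $(a \circ u)^{-1}(t_k)$ the coefficient $B$ vanishes identically: $\tau$ is tangent to the level set, so $0 = d(a \circ u)(\tau) = da(du(\tau)) = -B$. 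Without this observation the estimate cannot close, since a nonzero $B$ could make $|u^*\lambda|_{u^*g}^2 = A^2 + B^2$ large without contributing anything to the density $A$ of $E_\lambda(u, t_k)$.

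Granted $B \equiv 0$ on the level set, the remainder is direct. Using $\beta(X) \equiv 0$ together with the fact that $\beta$ is pulled back from $M$ (so $\beta(\partial_a) = 0$), one computes $u^*\beta(\tau) = \beta(AX + JV) = \beta(JV)$, yielding the pointwise bound $|u^*\beta(\tau)| \leq \|\beta\|_g |V|_g$. The normalization $|du(\rho)|_g = |\rho|_{u^*g} = 1$ combined with the $g$-orthogonality of the splitting gives $A^2 + |V|_g^2 = 1$ on the level set, and since $\{\rho, \tau\}$ is $u^*g$-orthonormal (as $J$ is a $g$-isometry), $|u^*\lambda|_{u^*g}^2 = u^*\lambda(\rho)^2 + u^*\lambda(\tau)^2 = B^2 + A^2 = A^2$ there. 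Thus the Fish-Hofer set $\{|u^*\lambda|_{u^*g} < \theta_k\}$ coincides pointwise on the level set with $\{A < \theta_k\}$, and on its complement $|V|_g \leq \sqrt{1 - \theta_k^2}$.

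From here the estimate follows by splitting the level set into $G_k = \{A \geq \theta_k\}$ and its complement $B_k$. On $G_k$, one has $|V|_g \leq \sqrt{1 - \theta_k^2}$, and the trivial inequality
\[
\theta_k \, \mu^1_{u^*g}(G_k) \leq \int_{G_k} A \, d\mu^1_{u^*g} \leq E_\lambda(u, t_k)
\]
bounds the length of $G_k$ by $E_\lambda(u, t_k)/\theta_k$. On $B_k$ one uses only $|V|_g \leq 1$, and the hypothesis $t_k \notin \mathcal{S}_{\theta_k, \epsilon_k}$ combined with Lemma \ref{lem:almostVertical} gives $\mu^1_{u^*g}(B_k) \leq \epsilon_k$. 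Adding the two contributions yields
\[
\Bigl|\int_{(a \circ u)^{-1}(t_k)} u^*\beta\Bigr| \leq \|\beta\|_g \Bigl( \tfrac{\sqrt{1-\theta_k^2}}{\theta_k} E_\lambda(u, t_k) + \epsilon_k \Bigr),
\]
so setting $c_k = \sqrt{1-\theta_k^2}/\theta_k + \epsilon_k$ delivers the claim; $c_k$ depends only on $\theta_k, \epsilon_k$ and tends to zero since $\theta_k \to 1$ and $\epsilon_k \to 0$.
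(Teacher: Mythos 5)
Your proof is correct and follows essentially the same route as the paper's: split the level set into the region where $|u^*\lambda|_{u^*g} \geq \theta_k$ (whose length is controlled by $E_\lambda(u,t_k)/\theta_k$ and where the $\xi$-component of the unit tangent is at most $\sqrt{1-\theta_k^2}$) and its complement of length at most $\epsilon_k$ supplied by Lemma \ref{lem:almostVertical}, then bound $u^*\beta$ by $\|\beta\|_g$ times the $\xi$-component. The only difference is presentational: you make explicit the vanishing of the $\partial_a$-component of $du(\rho)$ along a regular level set, which the paper leaves implicit by working directly with tangent vectors of $M$.
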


\begin{proof}
Note that if $W$ is some tangent vector on $M$, then 
$$|\lambda(W)|^2 + \|\pi_\xi W\|^2_g = \|W\|^2_g.$$
 
Therefore, if $\lambda$ has norm bounded above by $\theta$ on the line spanned by $W$, it follows that
$$\|\pi_\xi W\|_g \leq (1 - \theta)^{1/2}\|W\|_g.$$

This estimate is useful because since $\beta(X) \equiv 0$, we will have
\begin{align*}
|\beta(W)| &= |\beta(\pi_\xi(W))| \\
&\leq \|\pi_\xi(W)\|_g\|\beta\|_g \\
&\leq (1 - \theta)^{1/2}\|W\|_g\|\beta\|_g
\end{align*}

For every $k$, write
$$\Sigma_k = \{p \in (a \circ u)^{-1}(t_k)\,|\, \|u^*\lambda\|_{u^*g}(p) \leq \theta_k\}.$$

Then by non-negativity of $\lambda$ (Lemma \ref{lem:nonNegativityLambda}) and the definition of the set $\Sigma_k$, 
$$E_\lambda(u, t_k) \geq \int_{(a \circ u)^{-1}(t_k)\setminus \Sigma_k} u^*\lambda \geq \theta_k \mu^1_{u^*g}((a \circ u)^{-1}(t_k)\setminus \Sigma_k).$$

Now 
\begin{align*}
|\int_{(a \circ u)^{-1}(t_k)} u^*\beta| &\leq (\int_{(a \circ u)^{-1}(t_k)\setminus \Sigma_k} + \int_{\Sigma_k}) \|u^*\beta\|_{u^*g} \mu^1_{u^*g}(\zeta) \\
&\leq \mu^1_{u^*g}((a \circ u)^{-1}(t_k)\setminus \Sigma_k)(1 - \theta_k^2)^{1/2}\|\beta\|_g + \epsilon_k\|\beta\|_g \\
&\leq (\theta_k^{-1}(1 - \theta_k^2)^{1/2} + \epsilon_k)(E_\lambda(u, t_k) + 1)\|\beta\|_g \\
&\leq c_k(E_\lambda(u, t_k) + 1)\|\beta\|_g
\end{align*}
where $c_k \to 0$ as $k \to \infty$.
\end{proof}

Now we proceed with construction of an $X$-invariant measure. 

Pick any unbounded sequence of real numbers $\{t_k\}$ such that $u(C)$ intersects $\{t_k\} \times M$ for every $k$. Without loss of generality, after passing to a subsequence we may assume that $t_k$ either increases monotonically to $\infty$ or decreases monotonically to $-\infty$. We will perform the construction assuming the former holds, as the proof in the latter case is identical. 

Due to Proposition \ref{prop:periodicOrbits} and Lemma \ref{lem:nonNegativityLambda}, we may assume that
$$\lim_{k \to \infty} E_\lambda(u, t_k) = \infty.$$

Otherwise, $X$ has a periodic orbit.

Next, choose any sequence of real numbers $\theta_j \to 1$, and $\epsilon_j \to 0$. By Lemma \ref{lem:almostVertical}, the set $\mathcal{S}_{\theta_j, \epsilon_j}$ has finite measure for any $j$. Let $\{t_k\} \to \infty$ be the original monotonically increasing sequence fixed at the beginning of the proof. 

It follows that for any $j$, there exists some large $k(j)$ such that for any $k \geq k(j)$, the intersection of the closed interval $[t_{k(j)} - 1/j, t_{k(j)} + 1/j]$ with $\mathcal{S}_{\theta_j, \epsilon_j}$ does not have full measure equal to $2/j$. By Sard's theorem, there is some $\delta_{k(j)} \in (-1/j, 1/j)$ such that $t_{k(j)} + \delta_{k(j)}$ is a regular value of $a \circ u$, $u(C)$ intersects $\{t_{k(j)} + \delta_{k(j)}\} \times M$, and $t_{k(j)} + \delta_{k(j)} \not\in \mathcal{S}_{\theta_j, \epsilon_j}$. 

We conclude that after passing to a subsequence of the $t_k$ and re-labeling, there is a sequence $\delta_k \to 0$ such that $u(C)$ intersects $\{t_k + \delta_k\} \times M$ for every $k$ and that the sequence $\{t_k + \delta_k\}$ satisfies the conditions of Lemma \ref{lem:niceOneForms}. 

For the sake of brevity, we will overload our notation and re-label the number $t_k + \delta_k$ by $t_k$. Therefore, we may assume that $u(C)$ intersects $\{t_k\} \times M$ for every $k$, and that the sequence $\{t_k\}$ satisfies the conditions of Lemma \ref{lem:niceOneForms}. 

Given these considerations, Lemma \ref{lem:niceOneForms} has the following useful corollary.

\begin{cor} (One-forms annihilating $X$ disappear at infinity) \label{cor:betaVanishing}
Let $\beta$ be a one-form such that 
$$\beta(X) \equiv 0.$$

Then
$$\lim_{k \to \infty} E_\lambda(u, t_k)^{-1}\int_{(a \circ u)^{-1}(t_k)} u^*\beta = 0.$$
\end{cor}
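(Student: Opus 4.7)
The plan is to deduce this as an essentially immediate consequence of Lemma \ref{lem:niceOneForms} combined with the divergence of $\lambda$-energy along the chosen sequence $\{t_k\}$. All the setup is already in place: by construction the sequence $\{t_k\}$ (after the relabeling just performed) consists of regular values of $a \circ u$, avoids $\partial C$, and lies outside the bad sets $\mathcal{S}_{\theta_k, \epsilon_k}$, so Lemma \ref{lem:niceOneForms} applies verbatim; and by the reduction via Proposition \ref{prop:periodicOrbits} we may assume $E_\lambda(u, t_k) \to \infty$.

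Concretely, I would apply Lemma \ref{lem:niceOneForms} to the given one-form $\beta$ to obtain a sequence $c_k \to 0$ (depending only on $\theta_k$ and $\epsilon_k$) such that
\[
\Bigl| \int_{(a \circ u)^{-1}(t_k)} u^*\beta \Bigr| \leq c_k(E_\lambda(u, t_k) + 1)\|\beta\|_g
\]
for every $k$. Dividing both sides by $E_\lambda(u, t_k) > 0$ yields
\[
E_\lambda(u, t_k)^{-1} \Bigl| \int_{(a \circ u)^{-1}(t_k)} u^*\beta \Bigr| \leq c_k \bigl(1 + E_\lambda(u, t_k)^{-1}\bigr)\|\beta\|_g.
\]
The factor $\|\beta\|_g$ is a finite constant, $c_k \to 0$ by Lemma \ref{lem:niceOneForms}, and $E_\lambda(u, t_k)^{-1} \to 0$ by the standing assumption. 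Hence the right-hand side tends to zero, giving the claim.

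There is no real obstacle here; the substantive work was done earlier in establishing Lemma \ref{lem:niceOneForms} (which is where the adaptedness of $J$, the non-negativity of $u^*\lambda$, and the Fish–Hofer weak-type estimate of Lemma \ref{lem:almostVertical} were all combined) and in the subsequence extraction that produced a sequence $\{t_k\}$ satisfying the hypotheses of that lemma while still having $E_\lambda(u, t_k) \to \infty$. The corollary is just the packaging of these ingredients in the form that will be used in the subsequent construction of the invariant measure $\sigma$.
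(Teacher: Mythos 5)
Your argument is correct and coincides with the paper's own proof: both apply Lemma \ref{lem:niceOneForms} to $\beta$, divide the resulting bound by $E_\lambda(u, t_k)$, and conclude from $c_k \to 0$ together with $E_\lambda(u, t_k) \to \infty$ (guaranteed by the reduction via Proposition \ref{prop:periodicOrbits}). Nothing is missing.
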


\begin{proof}
From Lemma \ref{lem:niceOneForms}, we find that there is a sequence of constants $c_k \to 0$ such that 
$$|\int_{(a \circ u)^{-1}(t_k)} u^*\beta| \leq c_k(E_\lambda(u, t_k) + 1)\|\beta\|_g.$$

Therefore, 
$$\lim_{k \to \infty} E_\lambda(u, t_k)^{-1}|\int_{(a \circ u)^{-1}(t_k)} u^*\beta| \leq  \lim_{k \to \infty} c_k E_\lambda(u,t_k)^{-1}(E_\lambda(u, t_k) + 1)\|\beta\|_g = 0.$$
\end{proof}

This sequence produces an invariant measure in the following way. For every $k$ define a Radon measure $\sigma_k$ by $$\sigma_k(f) = E_\lambda(u, t_k)^{-1}\int_{(a \circ u)^{-1}(t_k)} u^*(f\lambda)$$
for any continuous function $f$ on $M$. 

The norm of an arbitrary Radon measure $\sigma$ is its norm as an element of $C^0(M)^*$:
$$\|\sigma\| = \sup_{f \in C^0(M)} (\|f\|_{C^0(M)})^{-1}|\mu(f)|.$$

It is clear from the non-negativity of $\lambda$ that for any $k$, 
\begin{align*}
    \|\sigma\| &= \sup_{f \in C^0(M) \setminus\{0\}} (\|f\|_{C^0(M)})^{-1}|\sigma_k(f)| \\
    &= \sup_{f \in C^0(M) \setminus\{0\}} (\|f\|_{C^0(M)})^{-1}E_{\lambda}(u, t_k)^{-1} \int_{(a \circ u)^{-1}(t_k)} u^*f\lambda \\
    &\leq \sup_{f \in C^0(M) \setminus\{0\}} (\|f\|_{C^0(M)})^{-1}E_{\lambda}(u, t_k)^{-1} (\|f\|_{C^0(M)}\int_{(a \circ u)^{-1}(t_k)} u^*\lambda) \\
    &\leq \sup_{f \in C^0(M) \setminus\{0\}} E_{\lambda}(u, t_k)^{-1} \int_{(a \circ u)^{-1}(t_k)} u^*\lambda \\
    &= 1.
\end{align*}

Recall that the space of Radon measures with uniform norm bound is compact in the weak topology of linear functionals on $C^0(M)$ (see Theorem \ref{thm:radonMeasureCompactness}). Therefore, after passing to a subsequence, there is a limiting Radon measure $\sigma$, defined by the identity
$$\sigma(f) = \lim_{k \to \infty} \sigma_k(f)$$
for any continuous function $f$ on $M$. 

We now show that $\sigma$ is an invariant probability measure. First, $\sigma$ is certainly a probability measure, since $\sigma_k(1) = 1$ for every $k$. 

Next, recall it suffices by Lemma \ref{lem:linearizedInvariance} to show that $\sigma$ is equal to zero on any function of the form $\nabla_X f$ for any $C^1$ function $f$ on $M$. 

By definition,
\begin{align*}
    \nabla_X f \cdot \lambda &= df(X) \cdot \lambda \\
    &= (df \wedge \lambda)(X, -) + \lambda(X) \cdot df \\
    &= (df \wedge \lambda)(X, -) + df.
\end{align*}

Therefore by Stokes' theorem, 
\begin{align*}
    \sigma_k(\nabla_X f) &= E_\lambda(u, t_k)^{-1}\int_{(a \circ u)^{-1}(t_k)} u^*((df\wedge\lambda)(X, -)).
\end{align*}

Observe that $(df \wedge \lambda)(X, -)$ is a one-form that evaluates to $0$ when $X$ is plugged in. 

It follows that $\sigma$ is an invariant measure immediately by applying Corollary \ref{cor:betaVanishing} with $\beta = (df \wedge \lambda)(X, -)$, and so we have shown Theorem \ref{thm:invariantMeasures}.

\section{Proofs of Theorems \ref{thm:mainNonExact}, \ref{thm:mainExact}, and \ref{thm:confoliations}} \label{sec:proofs2}

Let $(M, \eta = (\lambda, \omega))$ be a framed Hamiltonian manifold and let
$$\mathbf{u} = (u, C, j, \mathbb{R} \times M, J, D, \mu)$$
be an $\omega$-finite pseudoholomorphic curve. Without loss of generality, we assume that the image of the projection $a \circ u$ of $u$ onto the $\mathbb{R}$-coordinate is unbounded in the positive $\mathbb{R}$-direction.

Suppose that the Hamiltonian vector field $X$ has no periodic orbits, since otherwise the statements of Theorems \ref{thm:mainNonExact}, \ref{thm:mainExact}, and \ref{thm:confoliations} are trivially true.

Then by Theorem \ref{thm:invariantMeasures}, we can assume without loss of generality that there is a sequence $\{t_k\}$ of regular values of $a \circ u$, increasing monotonically without bound such that the sequence of measures $\sigma_k$ defined by
$$\sigma_k(f) = E_\lambda(u, t_k)^{-1}(\int_{(a \circ u)^{-1}(t_k)} u^*(f\lambda))$$
converge weakly to an $X$-invariant probability measure $\sigma$ on $M$. 

For the remainder of this section, fix such a sequence $\{t_k\}$, the measures $\sigma_k$, and the resulting invariant probability measure $\sigma$.

To prove Theorem \ref{thm:mainNonExact} or Theorem \ref{thm:mainExact}, it remains to show that $\sigma$ is not a multiple of the volume density $\lambda \wedge \omega^n$ on $M$.

\subsection{Proof of Theorem \ref{thm:mainNonExact}}
We begin with the proof of Theorem \ref{thm:mainNonExact}. While it does not align with the previous technical setup, the proof is conceptually more clear if one views the volume measure and $\sigma$ as \emph{closed one-dimensional currents} instead. A closed one-dimensional current is a continuous linear functional $T$ on the space of smooth one-forms on $M$ such that, for any exact one-form $\alpha$, 
$$T(\alpha) = 0.$$ 

The framed Hamiltonian structure $\eta = (\lambda, \omega)$ determines a closed, one-dimensional current $V$ by the linear map
$$V: \alpha \mapsto \int_M \alpha \wedge \omega^n$$
where $\alpha$ is any smooth one-form. It is related to the volume measure, in that $V(f\lambda)$ is for any $f \in C^0(M)$ equal to the integral of $f$ on $M$ with respect to the volume measure $\lambda \wedge \omega^n$. 

This current vanishes on exact one-forms by an integration by parts and the fact that $\omega^n$ is closed. As a result, we see that it defines a linear functional $H^1(M; \mathbb{R}) \to \mathbb{R}$ on the first de Rham cohomology group of $M$. By the universal coefficient theorem, the vector space $\text{Hom}(H^1(M; \mathbb{R}), \mathbb{R})$ is isomorphic to $H_1(M; \mathbb{R})$. A careful examination of this chain of identifications shows that the class in $H_1(M; \mathbb{R})$ associated to $V$ is the Poincar\'e dual of the cohomology class of $\omega^n$. 

To define a closed, one-dimensional current version of the measure $\sigma$, we can instead define a sequence of currents $T_k$ by the linear maps
$$T_k: \alpha \mapsto E_\lambda(u, t_k)^{-1} \int_{(a \circ u)^{-1}(t_k)} u^*\alpha.$$

The currents $T_k$ are clearly closed by an application of Stokes' theorem, since they are just re-scalings of integration over the closed level curves $(a \circ u)^{-1}(t_k)$.

An analogue of the compactness result for Radon measures in Theorem \ref{thm:radonMeasureCompactness} shows that, upon passing to a subsequence, there is a closed current $T$ defined by
$$T: \alpha \mapsto \lim_{k \to \infty} T_k(\alpha).$$

We observe three properties of $T$ in the following lemma. 

\begin{lem} \label{lem:propertiesOfInvariantT}
The current $T$ satisfies the following three properties:
\begin{itemize}
\item If $\beta$ is any one-form such that $\beta(X) \equiv 0$, then $T(\beta) = 0$. 
\item $T$ is invariant under the flow of $X$, in the sense that
$$T(\mathcal{L}_X\alpha) = 0$$
for any one-form $\alpha$. 
\item For any one-form $\alpha$, 
$$T(\alpha) = \sigma(\alpha(X)).$$
\end{itemize}
\end{lem}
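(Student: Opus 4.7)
The plan is to verify the three properties in the order listed, with each property building on the previous one. All three are essentially direct consequences of Corollary \ref{cor:betaVanishing}, the closedness of $T$, and the decomposition $TM = \langle X \rangle \oplus \xi$.

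For the first property, if $\beta$ is a one-form with $\beta(X) \equiv 0$, then by definition
\[
T(\beta) = \lim_{k \to \infty} E_\lambda(u, t_k)^{-1} \int_{(a \circ u)^{-1}(t_k)} u^*\beta,
\]
and Corollary \ref{cor:betaVanishing} says precisely that this limit is zero. So this is an immediate restatement of the corollary in the language of currents.

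For the second property, I would apply Cartan's magic formula to write
\[
\mathcal{L}_X \alpha = d(\alpha(X)) + (d\alpha)(X, -).
\]
The current $T$ is closed since it is a weak limit of the closed currents $T_k$ (closure being a linear condition that passes to the weak limit, and the $T_k$ are closed by Stokes' theorem as already noted in the excerpt). Hence $T(d(\alpha(X))) = 0$. The remaining one-form $(d\alpha)(X, -)$ vanishes when contracted with $X$ by antisymmetry, so by the first property $T((d\alpha)(X,-)) = 0$. Adding gives $T(\mathcal{L}_X \alpha) = 0$.

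For the third property, I would decompose any one-form $\alpha$ using the splitting $TM = \langle X \rangle \oplus \xi$ induced by the framed Hamiltonian structure: set $\beta := \alpha - \alpha(X)\lambda$. Since $\lambda(X) = 1$, we have $\beta(X) = 0$, so the first property gives $T(\beta) = 0$, and therefore $T(\alpha) = T(\alpha(X)\lambda)$. But by the definition of $T$ and $\sigma_k$,
\[
T(\alpha(X)\lambda) = \lim_{k \to \infty} E_\lambda(u, t_k)^{-1} \int_{(a \circ u)^{-1}(t_k)} u^*(\alpha(X)\lambda) = \lim_{k \to \infty} \sigma_k(\alpha(X)) = \sigma(\alpha(X)),
\]
where the last equality uses the weak convergence $\sigma_k \to \sigma$ applied to the continuous function $\alpha(X)$ on $M$.

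There is no genuine obstacle here; the only minor subtlety is making sure that weak convergence of the currents $T_k$ (from the compactness statement invoked just above the lemma) can indeed be taken to hold on the same subsequence as the weak convergence $\sigma_k \to \sigma$, which is fine by a standard diagonal extraction. Everything else reduces to Cartan's formula, the decomposition of $\alpha$ along $X$ and $\xi$, and the previously established Corollary \ref{cor:betaVanishing}.
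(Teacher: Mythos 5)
Your proposal is correct and follows essentially the same route as the paper's own proof: item one from Corollary \ref{cor:betaVanishing} (the paper cites Lemma \ref{lem:niceOneForms} directly, which is equivalent in this setting), item two via Cartan's formula plus closedness of $T$ plus item one, and item three via the decomposition $\alpha = \alpha(X)\lambda + \beta$ with $\beta(X) \equiv 0$. Your added remark about extracting a common subsequence for the weak limits of $T_k$ and $\sigma_k$ is a reasonable small point of care that the paper leaves implicit.
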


\begin{proof}
The first item of the lemma is an immediate consequence of the definition of $T$ and Lemma \ref{lem:niceOneForms}. 

By Cartan's formula and the fact that $T$ is closed, the second item is equivalent to showing that
$$T(\iota_X d\alpha) = 0$$
for any smooth one-form $\alpha$. The one-form $\iota_X d\alpha$ contracts with $X$ to zero, so the second item then follows from the first item. 

To prove the third item, we can write any smooth one-form $\alpha$ as $\alpha(X)\lambda + \beta$ where $\beta(X) \equiv 0$. 

Then we compute
\begin{align*}
    T(\alpha) &= T(\alpha(X)\lambda + \beta) \\
    &= T(\alpha(X)\lambda) \\
    &= \lim_{k \to \infty} T_k(\alpha(X)\lambda) \\
    &= \lim_{k \to \infty} E_\lambda(u, t_k)^{-1} \int_{(a \circ u)^{-1}(t_k)} u^*\alpha \\
    &= \lim_{k \to \infty} \sigma_k(\alpha(X)) \\
    &= \sigma(\alpha(X)).
\end{align*}
\end{proof}

Any closed, one-dimensional current has an associated homology class in $H_1(M; \mathbb{R})$, produced by the same procedure we mentioned above in the special case of the current $V$. Any closed, one-dimensional current defines a linear map $H^1(M; \mathbb{R}) \to \mathbb{R}$ on the first de Rham cohomology group of $M$. Then, an application of the universal coefficient theorem shows that $H_1(M; \mathbb{R}) \simeq \text{Hom}(H^1(M; \mathbb{R}), \mathbb{R})$. 

We can show that, unlike $V$, the homology class associated to the limiting current $T$ is zero. The level sets of the curve $u$ all clearly have the same homology class $\alpha$ since any two level sets are co-bounded by a portion of the domain Riemann surface $C$. This implies that the currents $T_k$ have homology class $E_\lambda(u, t_k)^{-1}\alpha$. Since 
$$E_\lambda(u, t_k) \to \infty,$$ it follows that the limiting current $T$ has homology class zero, and therefore cannot be equal to the volume current. 

This discussion, along with Lemma \ref{lem:propertiesOfInvariantT}, proves the second part of Theorem \ref{thm:mainNonExact}. 

We now proceed with the remainder of the proof of Theorem \ref{thm:mainNonExact}, showing that $\sigma$ is not the normalized volume density. 

By Poincar\'e duality, there is a closed one-form $\nu$ such that
$$\int_M \nu \wedge \omega^n \neq 0.$$

Now observe that we can write
$$\nu = q\lambda + \beta$$
where $q = \nu(X)$ and $\beta$ is a one-form such that $\beta(X) \equiv 0$.

The integral of $q$ with respect to the volume measure is clearly
$$\int_M q\lambda \wedge \omega^n = \int_M \nu \wedge \omega^n \neq 0.$$

Recall the definition of the measures $\sigma_k$ from the previous subsection that had $\sigma$ as a weak limit. 

Each of these measures is defined by integrating over the set of loops $(a \circ u)^{-1}(t_k)$ and then re-normalizing by the $\lambda$-energy. 

Let $\pi_M: \mathbb{R} \times M \to M$ be the projection onto the $M$-factor, and 
$$v = \pi_M \circ u: C \to M.$$

Write $\alpha_k \in H_1(M;\mathbb{R})$ for the homology class represented by the image of $(a \circ u)^{-1}(t_k)$ under $v$.

By definition, the cycles represented by the images of $(a \circ u)^{-1}(t_k)$ and $(a \circ u)^{-1}(t_{k+1})$ are cobounded by the image of $(a \circ u)^{-1}([t_k, t_{k+1}])$ under $v$. Therefore, the homology classes $\alpha_k$ and $\alpha_{k+1}$ agree. Since this holds true for any $k$, the homology classes $\alpha_k$ are all equal to a single homology class $\alpha \in H_1(M; \mathbb{R})$. 

Since $\nu$ is closed, the integral over the level loops is homologically determined:
\begin{equation}\label{eq:mainNonExact1}\int_{a \circ u)^{-1}(t_k)} u^*\nu = \langle \nu, \alpha_k \rangle = \langle \nu, \alpha \rangle.\tag{\textbullet}\end{equation}

Since $E_\lambda(u, t_k) \to \infty$, it follows from (\ref{eq:mainNonExact1}) that
$$\lim_{k \to \infty} E_\lambda(u, t_k)^{-1}\int_{(a \circ u)^{-1}(t_k)} u^*\nu = 0.$$

It then follows from Corollary \ref{cor:betaVanishing} that
\begin{align*}
    \sigma(q) &= \lim_{k \to \infty} E_\lambda(u, t_k)\int_{(a \circ u)^{-1}(t_k)} u^*(q\lambda)\\
    &= \lim_{k \to \infty} E_\lambda(u, t_k)\int_{(a \circ u)^{-1}(t_k)} u^*(q \lambda + \beta) \\
    &= \lim_{k \to \infty} E_\lambda(u, t_k)\int_{(a \circ u)^{-1}(t_k)} u^*\nu \\
    &= 0.
\end{align*}

Recall that the integral of $q$ with respect to the volume measure is non-zero.

Since $\sigma$ is non-zero but $\sigma(q) = 0$, it cannot be a constant multiple of the volume measure.

\subsection{Proof of Theorem \ref{thm:mainExact}}
Next, we prove Theorem \ref{thm:mainExact}. In this setting, the non-vanishing of $Lk(\omega)$ provides us a function $q$ such that $\sigma(q) = 0$ but 
$$\int_M q \lambda \wedge \omega^n \neq 0.$$
This implies immediately that $\sigma$ cannot equal the volume density.

We proceed to construct this function, show it has the desired property, and as a result prove Theorem \ref{thm:mainExact}.

Write $\nu$ for a primitive of $\omega$, guaranteed by our exactness assumption. Then $\nu = q\lambda + \beta$, where $\beta$ is a one-form vanishing when contracted with $X$ and $q$ is a smooth function on $M$.

Note that
\begin{align*}
    \int_M q\lambda \wedge \omega^n &= \int_M \nu \wedge \omega^n \\
    &\neq 0.
\end{align*}

The last inequality is given by our assumption that the self-linking number $\text{Lk}(\omega)$ does not vanish. 

On the other hand, we deduce the estimate
\begin{align}
    \sigma(q) &\leq \lim_{k \to \infty} |\sigma_k(q)| \nonumber\\
    &= \lim_{k \to \infty} E_\lambda(t_k)^{-1}|\int_{(a \circ u)^{-1}(t_k)} u^*(q\lambda)| \nonumber\\
    &= \lim_{k \to \infty} E_\lambda(t_k)^{-1}|\int_{(a \circ u)^{-1}(t_k)} u^*(\nu - \beta)| \nonumber\\
    &\leq \lim_{k \to \infty} E_\lambda(t_k)^{-1}|\int_{(a \circ u)^{-1}(t_k)} u^*\nu| \tag{\textbullet} \label{eq:mainExact1}\\
    &\leq \lim_{k \to \infty} E_\lambda(t_k)^{-1}(|\int_{(a \circ u)^{-1}([0, t_k])} u^*\omega| + |\int_{(a \circ u)^{-1}(0)} u^*\nu|) \tag{\textbullet\textbullet} \label{eq:mainExact2} \\
    &\leq \lim_{k \to \infty} \kappa E_\lambda(t_k)^{-1} \tag{\textbullet\textbullet\textbullet} \label{eq:mainExact3} \\
    &= 0 \nonumber.
\end{align}

The inequality (\ref{eq:mainExact1}) is a consequence of the triangle inequality and an application of Corollary \ref{cor:betaVanishing} to remove the $\beta$-term from the integral. The inequality (\ref{eq:mainExact2}) is a consequence of Stokes' theorem and the triangle inequality. 

The constant $\kappa$ in (\ref{eq:mainExact3}) is some constant depending on the curve $u$ and independent of $k$. The fact that $\kappa < \infty$ follows from the fact that the $\omega$-energy of the curve is finite and $\omega$ is non-negative on tangent planes of $C$. 

Taking $k \to \infty$, one finds $\sigma(q) = 0$. Since the integral of $q$ with respect to the volume measure is non-zero, $\sigma$ is not a multiple of the volume measure. 

\subsection{Proof of Theorem \ref{thm:confoliations}}
Finally, we prove Theorem \ref{thm:confoliations}. 

Pick a primitive $\nu$ of $\omega$ such that $\nu(X) \geq 0$, as guaranteed to us by our confoliation type assumption.

Write again $\nu = q\lambda + \beta$ as in the proof of Theorem \ref{thm:mainExact}. Due to the confoliation-type assumption, we have that $q = \nu(X) \geq 0$. 

Then we can show 
$$\text{supp}(\sigma) \subseteq \{q = 0\} \subset M.$$

Recall that the other case of this theorem was handled by Proposition \ref{prop:periodicOrbits}. We are now assuming that there are no periodic orbits, so in particular
$$E_\lambda(u, t_k) \to \infty$$
and Corollary \ref{cor:betaVanishing} holds. 

Write $M_\delta = \{q\geq \delta\}$. Observe that
$$\lambda = q^{-1}(\nu - \beta)$$
wherever $q \neq 0$, so as a consequence of Lemma \ref{lem:nonNegativityLambda} and the fact that $q \geq 0$, $\nu - \beta$ is non-negative on tangent lines of regular level loops wherever $q \neq 0$. 

Now we compute
\begin{align}
    \lim_{k \to \infty} |E_\lambda(t_k)^{-1}\int_{(a \circ u)^{-1}(t_k)\cap M_\delta} u^*\lambda| &= \lim_{k \to \infty} E_\lambda(t_k)^{-1}|\int_{(a \circ u)^{-1}(t_k)\cap M_\delta} q^{-1}u^*(\nu - \beta)| \nonumber\\
    &\leq \lim_{k \to \infty} \delta^{-1} E_\lambda(t_k)^{-1}|\int_{(a \circ u)^{-1}(t_k)\cap M_\delta} u^*(\nu - \beta)| \tag{\textbullet}\label{eq:confoliations1} \\
    &\leq \lim_{k \to \infty} \delta^{-1}E_\lambda(t_k)^{-1}|\int_{(a \circ u)^{-1}(t_k)} u^*(\nu - \beta)| \tag{\textbullet\textbullet}\label{eq:confoliations2}\\
     &\leq \lim_{k \to \infty} \delta^{-1}E_\lambda(t_k)^{-1}|\int_{(a \circ u)^{-1}(t_k)} u^*\nu| \tag{\textbullet\textbullet\textbullet}\label{eq:confoliations3} \\
    &\leq \lim_{k\to\infty}\delta^{-1}E_\lambda(t_k)^{-1}(|\int_{(a \circ u)^{-1}([0, t_k])} u^*\omega| + |\int_{(a \circ u)^{-1}(0)} u^*\nu|) \nonumber\\
    &\leq \delta^{-1}E_\lambda(t_k)^{-1}\kappa \tag{$\star$}\label{eq:confoliations4} \\
    &= 0 \nonumber.
\end{align}

The fact that $\nu - \beta$ is non-negative on the tangent lines of the $(a\circ u)^{-1}(t_k)$ is used to deduce the inequalities (\ref{eq:confoliations1}) and (\ref{eq:confoliations2}). Corollary \ref{cor:betaVanishing} and the triangle inequality are used to deduce (\ref{eq:confoliations3}). The constant $\kappa > 0$ in (\ref{eq:confoliations4}) denotes the sum of the $\omega$-energy of $u$ and the absolute value of the integral of $u^*\nu$ on $(a \circ u)^{-1}(0)$, and is finite due to finiteness of the $\omega$-energy of $u$. 

It follows that the support of $\sigma$ lies outside $M_\delta$ for every $\delta > 0$, so it must lie inside $\{q = 0\}$ as stated in Theorem \ref{thm:confoliations}.

\appendix

\section{Measure-theoretic preliminaries} \label{sec:measureTheory}

We collect in this section various elementary measure-theoretic definitions and results necessary for the proofs of the main theorems.

Fix a closed manifold $M$ of dimension $2n+1$ and a framed Hamiltonian structure $\eta = (\lambda, \omega)$ on $M$. We note that the associated Hamiltonian vector fields $X$ is \emph{volume-preserving} in the following sense.

\begin{lem} \label{lem:volumePreserving}
In any framed Hamiltonian manifold $(M^{2n+1}, \eta = (\lambda, \omega))$, the flow of the Hamiltonian vector field $X$ preserves the volume form
$$\lambda \wedge \omega^n.$$
\end{lem}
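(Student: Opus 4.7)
The plan is to prove $\mathcal{L}_X(\lambda \wedge \omega^n) = 0$ directly using Cartan's magic formula $\mathcal{L}_X = d\iota_X + \iota_X d$, together with the three defining algebraic identities: $\iota_X \lambda = 1$, $\iota_X \omega = 0$, and $d\omega = 0$.

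First I would compute the interior product $\iota_X(\lambda \wedge \omega^n)$ using the graded Leibniz rule. Since $\iota_X \lambda = 1$ and $\iota_X \omega = 0$ (which gives $\iota_X \omega^n = n \omega^{n-1} \wedge \iota_X \omega = 0$), this simplifies to $\omega^n$. Its exterior derivative is zero because $d\omega = 0$, so the first half of Cartan's formula contributes nothing. Next, $d(\lambda \wedge \omega^n) = d\lambda \wedge \omega^n$, again using $d\omega = 0$. Applying $\iota_X$ and the Leibniz rule reduces this to $(\iota_X d\lambda) \wedge \omega^n$. Putting the two halves together yields
\begin{equation*}
\mathcal{L}_X(\lambda \wedge \omega^n) = (\iota_X d\lambda) \wedge \omega^n.
\end{equation*}

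The main (and only nontrivial) step is to show this $(2n+1)$-form vanishes identically. I would do this by a rank argument: contract the form with $X$ itself and observe that
\begin{equation*}
\iota_X\bigl((\iota_X d\lambda) \wedge \omega^n\bigr) = \bigl(\iota_X \iota_X d\lambda\bigr)\, \omega^n - (\iota_X d\lambda) \wedge \iota_X \omega^n = 0,
\end{equation*}
since $\iota_X \iota_X = 0$ on any form (by antisymmetry, $d\lambda(X,X) = 0$) and $\iota_X \omega^n = 0$ as above. Since $X$ is nowhere vanishing (indeed $\lambda(X) \equiv 1$), a top-dimensional form on $M^{2n+1}$ whose contraction with $X$ vanishes must be zero at every point: at any $p \in M$, extend $X_p$ to a basis of $T_pM$ and evaluate. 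Thus $\mathcal{L}_X(\lambda \wedge \omega^n) = 0$, which is equivalent to the flow of $X$ preserving the volume form.

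I do not expect any serious obstacles; the proof is a short algebraic verification and the only subtlety is remembering that $\iota_X \omega = 0$ is used three times (to handle $\omega^n$, to simplify $\iota_X(\lambda \wedge \omega^n)$, and to eliminate the second term after the final Leibniz expansion), while $d\omega = 0$ is used to kill $d\omega^n$ and $\iota_X \lambda = 1$ is used to identify $\iota_X(\lambda \wedge \omega^n)$ with $\omega^n$ so that the $d\iota_X$ term drops out.
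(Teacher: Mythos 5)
Your proof is correct and follows exactly the route the paper intends: the paper dismisses this lemma in one sentence as ``an immediate consequence of Cartan's formula for the Lie derivative along with the properties laid out in Definitions \ref{defn:framedHamiltonianStructures} and \ref{defn:hamiltonianVectorField},'' and your computation simply carries that out in full, with the correct signs in the graded Leibniz rule and a valid rank argument (contraction of a top-degree form with the nowhere-vanishing $X$) to kill the residual term $(\iota_X d\lambda)\wedge\omega^n$. No gaps.
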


The proof of the lemma is an immediate consequence of Cartan's formula for the Lie derivative along with the properties laid out in Definitions \ref{defn:framedHamiltonianStructures} and \ref{defn:hamiltonianVectorField}. 

Now we can observe that from the above lemma that $\lambda \wedge \omega^n$ is an \emph{invariant Radon measure} of $X$. We will now proceed to elaborate on this statement. Recall the following two measure-theoretic definitions.

\begin{defn}
A \textbf{Borel measure} on $M$ is a measure on the algebra generated by Borel sets of $M$. 
\end{defn}

\begin{defn}
A \textbf{Radon measure} on $M$ is a Borel measure $\sigma$ such that
\begin{enumerate}
    \item $\sigma(K) < \infty$ for every compact $K \subseteq M$,
    \item For any subset $A \subseteq M$, $\sigma(A)$ is the infimum of $\sigma(U)$ over all open sets $U$ containing $A$.
    \item For any open $U \subseteq M$, $\sigma(U)$ is the supremum of $\sigma(K)$ over all compact sets $K$ contained in $U$.
\end{enumerate}
\end{defn}

A key property of Radon measures is that we can specify them by continuous linear functionals on $C^0(M)$, the Banach space of continuous functions on $M$. Note that any Radon measure $\sigma$ induces a continuous linear functional on $C^0(M)$ by the action
$$f \mapsto \int_M f d\sigma.$$

The \emph{Riesz representation theorem} below shows that the converse is true as well.

\begin{thm} \emph{(Riesz representation theorem, \cite[Chapter $1$, Theorem $4.14$]{Simon14})} Let $L$ be a continuous functional on $C^0(M)$ such that
$$\sup_{f \in C^0(M),\,|f| \leq 1} L(f) < \infty.$$

Then there is a unique Radon measure $\sigma$ on $M$ such that
$$L(f) = \int_M f d\sigma$$
for every $f \in C^0(M)$.
\end{thm}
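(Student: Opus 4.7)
The plan is to prove this as the standard Riesz representation theorem for the compact space $M$, in three stages: reduce to positive functionals via Jordan decomposition, construct the measure from the functional via an outer measure, and verify uniqueness.

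First, I would observe that the hypothesis $\sup_{|f|\leq 1} L(f) < \infty$ says exactly that $L$ is bounded in the $C^0$-norm, so $L \in C^0(M)^*$. I would then invoke a lattice-theoretic Jordan-type decomposition $L = L_+ - L_-$ into positive linear functionals, defined on non-negative $f$ by
$$L_+(f) = \sup\{L(g) : g \in C^0(M),\ 0 \leq g \leq f\},$$
and extended by linearity. Since sums of Radon measures are Radon, it suffices to construct the representing measure when $L$ is positive.

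So assume $L$ is a positive bounded linear functional. For open $U \subseteq M$, define
$$\sigma_*(U) = \sup\{L(f) : f \in C^0(M),\ 0 \leq f \leq 1,\ \text{supp}(f) \subset U\},$$
and for arbitrary $A \subseteq M$, set $\sigma^*(A) = \inf\{\sigma_*(U) : U \text{ open},\ A \subseteq U\}$. I would verify that $\sigma^*$ is an outer measure: monotonicity is built in, and countable subadditivity on open sets follows by a partition-of-unity argument, using that $M$ is metrizable so Urysohn-type cutoff functions are readily available. Applying Carathéodory's criterion yields a $\sigma$-algebra of $\sigma^*$-measurable sets, and a standard sandwich argument (approximating a closed set and its open complement by cutoff functions whose $L$-values agree with $\sigma^*$) shows that every open set, and hence every Borel set, is measurable. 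The restriction $\sigma$ of $\sigma^*$ to Borel sets is then automatically Radon: finiteness on compact sets follows from the norm bound on $L$, outer regularity is built into the definition, and inner regularity for open sets is the content of the defining supremum.

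Finally, I would prove the representation $L(f) = \int_M f\,d\sigma$ for every $f \in C^0(M)$. By linearity and the Jordan decomposition applied to $f$, it suffices to consider $0 \leq f \leq 1$. Partitioning the range $[0,1]$ into fine intervals $[t_i, t_{i+1}]$ and using the level sets $U_i = \{f > t_i\}$, one can produce cutoff functions $\varphi_i \in C^0(M)$ with $\text{supp}(\varphi_i) \subset U_i$ and $\sum t_i(\varphi_{i} - \varphi_{i+1})$ approximating $f$ uniformly; passing to a limit and using the defining supremum in $\sigma_*$ to match $L(\varphi_i)$ against $\sigma(U_i)$ yields the identity. Uniqueness is then immediate: if two Radon measures $\sigma_1, \sigma_2$ induce the same linear functional, then by the defining supremum they must assign the same value to every open set, and hence by outer regularity to every Borel set.

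The hard part will be the technical bookkeeping in two places: verifying Carathéodory measurability of every open set (which forces one to construct, for each pair of disjoint open sets $U, V$ and each $\varepsilon > 0$, compactly supported $f_U, f_V$ with disjoint supports whose $L$-values nearly realize $\sigma_*(U)$ and $\sigma_*(V)$), and establishing the representation formula, which requires careful approximation of an arbitrary $f \in C^0(M)$ by simple functions whose integrals against $\sigma$ match $L$ in the limit. Compactness of $M$ simplifies the argument considerably, since finite open covers replace the $\sigma$-compact exhaustions needed in the locally compact setting.
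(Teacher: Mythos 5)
This theorem is not proved in the paper at all: it is quoted verbatim as a known result from Simon's lecture notes (\cite[Chapter 1, Theorem 4.14]{Simon14}), so there is no in-paper argument to compare against. Your outline is the standard Riesz--Markov--Kakutani construction (Jordan decomposition of the bounded functional into positive parts, the outer measure $\sigma^*$ built from $\sigma_*$ on open sets, Carath\'eodory measurability, regularity, and the layer-cake approximation for the representation formula), and it is correct in all essentials; compactness of $M$ does indeed remove the usual $\sigma$-compactness bookkeeping. The one point worth flagging is a mismatch with the theorem's literal wording rather than with your argument: the paper's Definition of a Radon measure is a \emph{non-negative} Borel measure, and a general bounded functional $L$ (e.g.\ $f \mapsto -f(p)$) cannot be represented by such a measure; your Jordan decomposition correctly produces a \emph{signed} measure $\sigma_+ - \sigma_-$, which is what the theorem should assert (or else $L$ should be assumed positive, which is the only case the paper ever uses). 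Your uniqueness argument also tacitly needs the observation that for a Radon measure $\sigma$ one has $\sigma(U) = \sup\{\int f\,d\sigma : 0 \le f \le 1,\ \mathrm{supp}(f) \subset U\}$, which follows from inner regularity on open sets together with Urysohn's lemma; with that supplied, the proof is complete.
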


The volume form $\lambda \wedge \omega^n$ clearly induces a Radon measure on $M$ by the map
$$f \mapsto \int_M f \lambda \wedge \omega^n.$$

\begin{rem}
Given the Riesz representation theorem above, we often specify without additional comment a Radon measure $\sigma$ by the corresponding linear functional
$$f \mapsto \int_M f d\sigma.$$

This motivates the following notation.
\end{rem}

\begin{defn}
For any Radon measure $\sigma$, we write
$$\sigma(f) = \int_M f d\sigma$$
for any $f \in C^0(M)$.
\end{defn}

The space of Radon measures has a natural norm, given by the dual norm on $C^0(M)^*$:
$$||\sigma|| = \sup_{|f|_{C^0} = 1} |\sigma(f)|.$$

The unit ball in this norm is sequentially compact, which will be quite useful later.

\begin{thm} \label{thm:radonMeasureCompactness}
\cite[Chapter $1$, Theorem $4.16$]{Simon14} Suppose $\sigma_k$ is a sequence of Radon measures on $M$ such that
$$\|\sigma_k\| \leq 1$$
for every $k$. Then after passing to a subsequence, there is a Radon measure $\sigma$ such that
$$\sigma(f) = \lim_{k \to \infty} \sigma_k(f)$$
for every $f \in C^0(M)$. 
\end{thm}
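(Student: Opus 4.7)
The plan is to combine the separability of $C^0(M)$ with a standard Cantor diagonal argument, and then invoke the Riesz representation theorem to package the resulting linear functional as a Radon measure. Since $M$ is a closed manifold, it is a compact metric space, so $C^0(M)$ is separable; choose once and for all a countable dense subset $\{f_n\}_{n \geq 1} \subset C^0(M)$.

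The first step is to observe that for each fixed $n$, the scalar sequence $\{\sigma_k(f_n)\}_{k}$ is bounded, since $|\sigma_k(f_n)| \leq \|\sigma_k\| \cdot \|f_n\|_{C^0(M)} \leq \|f_n\|_{C^0(M)}$. By Bolzano--Weierstrass I can pass to a convergent subsequence for $n = 1$, then a further subsequence for $n = 2$, and so on. Taking the diagonal subsequence (which I still denote $\sigma_k$) yields a single subsequence along which the limit
\[
L(f_n) \;=\; \lim_{k \to \infty} \sigma_k(f_n)
\]
exists for every $n$.

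The second step is to extend $L$ from the countable dense set to all of $C^0(M)$ by uniform continuity. For any $f \in C^0(M)$ and any $f_n$ with $\|f - f_n\|_{C^0(M)} < \varepsilon$, the uniform bound $\|\sigma_k\| \leq 1$ gives $|\sigma_k(f) - \sigma_k(f_n)| \leq \varepsilon$ for all $k$. This shows that $\{\sigma_k(f)\}$ is Cauchy (an $\varepsilon/3$-argument using convergence on $f_n$), and hence convergent. Defining $L(f) = \lim_{k \to \infty} \sigma_k(f)$, the resulting $L : C^0(M) \to \mathbb{R}$ is clearly linear, and satisfies $|L(f)| \leq \|f\|_{C^0(M)}$, so it is continuous with $\|L\| \leq 1$.

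The final step is to apply the Riesz representation theorem stated earlier in the excerpt to $L$, producing a Radon measure $\sigma$ on $M$ with $L(f) = \int_M f \, d\sigma = \sigma(f)$ for all $f \in C^0(M)$. By construction, $\sigma(f) = \lim_k \sigma_k(f)$ for every $f \in C^0(M)$, which is the desired conclusion. There is no serious obstacle in this argument: the only non-trivial input is the separability of $C^0(M)$, which holds because $M$ is a compact metrizable space (so e.g.\ polynomials in a finite collection of coordinate functions from a smooth embedding into Euclidean space give a countable dense $\mathbb{Q}$-subalgebra via Stone--Weierstrass).
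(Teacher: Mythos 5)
The paper does not actually prove this statement: it is quoted directly from Simon's lecture notes \cite[Chapter 1, Theorem 4.16]{Simon14} and used as a black box, so there is no in-paper argument to compare yours against. Your proof is the standard one for sequential weak-$*$ compactness of the unit ball of $C^0(M)^*$ when $M$ is compact metrizable --- separability of $C^0(M)$, a Bolzano--Weierstrass diagonal extraction on a countable dense set, extension of the limit functional by the uniform bound $\|\sigma_k\| \leq 1$, and the Riesz representation theorem --- and it is correct as written. The one point worth adding is that the paper's definition of a Radon measure is a \emph{nonnegative} Borel measure, and the version of the Riesz representation theorem that produces such a measure requires the functional to be positive; a general bounded linear functional on $C^0(M)$ only yields a signed measure. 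This is immediate here --- if $f \geq 0$ then $\sigma_k(f) \geq 0$ for every $k$, hence $L(f) = \lim_k \sigma_k(f) \geq 0$ --- but you should record that line so that the object produced by Riesz representation is a genuine Radon measure in the paper's sense.
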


It remains to define an \emph{invariant measure}. This definition is stated for any Borel measure $\sigma$ on $M$.  

Denote the flow of $X$ by the one-parameter family of diffeomorphisms
$$\phi_X^t: M \to M.$$

\begin{defn} \label{defn:invariantMeasure}
Let $\sigma$ be a Borel measure on $M$. We say $\sigma$ is an \textbf{invariant measure} for $X$ if for any $\sigma$-measurable function $f: M \to \mathbb{R}$ and any $t \in \mathbb{R}$, one has
$$\int_M (f \circ \phi_X^t) d\sigma = \int_M f d\sigma.$$
\end{defn}

\begin{exe} \label{exe:invariantMeasureOrbit}
Suppose $X$ has a periodic orbit, parameterized by a smooth map $\gamma: I \to M$ where $I$ is some closed interval. Then the pushforward $\gamma_*\sigma_{Leb}$ of the $1$-dimensional Lebesgue measure is an invariant measure for $X$. 
\end{exe}

\begin{exe} \label{exe:invariantMeasureSet}
More generally, let $S \subseteq M$ be an invariant set containing the \emph{closure} of a trajectory $\gamma: \mathbb{R} \to M$ of $X$. 

Then we can define an invariant measure supported in $S$ by the following procedure. Take some sequence $T_k \to \infty$, and define Radon measures $\sigma_k$ by the functionals
$$f \mapsto 2T_k^{-1}\int_{-T_k}^{T_k} f(\gamma(t))dt.$$ 

Then these measures have a uniform bound on their norm, and we can appeal to Theorem \ref{thm:radonMeasureCompactness} to extract a limit measure $\sigma$, possibly after passing to a subsequence. One can verify easily that $\sigma$ is non-zero, invariant, and supported in $S$. 
\end{exe}

By Definition \ref{defn:invariantMeasure} and Lemma \ref{lem:volumePreserving}, we conclude the following lemma. 

\begin{lem} \label{lem:volumeMeasurePreserving}
The volume form $\lambda \wedge \omega^n$ is invariant under the flow of $X$ as a Radon measure. 
\end{lem}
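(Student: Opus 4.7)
The plan is to translate the pointwise identity $(\phi_X^t)^*(\lambda \wedge \omega^n) = \lambda \wedge \omega^n$, which is the content of Lemma \ref{lem:volumePreserving}, into the measure-theoretic invariance statement of Definition \ref{defn:invariantMeasure} via the change of variables formula for diffeomorphisms on an oriented manifold.

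First, I would record that for every $t \in \mathbb{R}$, the flow map $\phi_X^t : M \to M$ is an orientation-preserving diffeomorphism (since $\phi_X^0 = \mathrm{id}$ and $\{\phi_X^t\}$ is a smooth isotopy), so the standard change of variables formula
$$\int_M (\phi_X^t)^*\Omega = \int_M \Omega$$
holds for any top-degree form $\Omega$ on $M$. Applying this with $\Omega = f \cdot (\lambda \wedge \omega^n)$ for $f \in C^0(M)$, together with $(\phi_X^t)^*(\lambda \wedge \omega^n) = \lambda \wedge \omega^n$ from Lemma \ref{lem:volumePreserving}, yields
$$\int_M (f \circ \phi_X^t)(\lambda \wedge \omega^n) = \int_M (\phi_X^t)^*\bigl(f \cdot (\lambda \wedge \omega^n)\bigr) = \int_M f \cdot (\lambda \wedge \omega^n).$$
This is precisely the invariance identity of Definition \ref{defn:invariantMeasure} for continuous test functions.

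Next, I would upgrade this from continuous $f$ to arbitrary $\sigma$-measurable $f$ by a standard measure-theoretic approximation. Since both the Radon measure $\sigma$ associated to $\lambda \wedge \omega^n$ and its pushforward $(\phi_X^t)_*\sigma$ are Radon measures on the closed manifold $M$, and they agree on $C^0(M)$ by the previous paragraph, the Riesz representation theorem cited in the appendix forces $(\phi_X^t)_*\sigma = \sigma$ as Borel measures. The defining identity then extends to indicator functions of Borel sets, to simple functions by linearity, and to nonnegative measurable functions by monotone convergence, yielding the statement in Definition \ref{defn:invariantMeasure} in full generality.

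There is essentially no technical obstacle here; the only point that requires mild care is ensuring that the change of variables identity is applied correctly (the orientation-preserving property of $\phi_X^t$ is the relevant input), and that the extension from continuous to measurable test functions is justified by the uniqueness clause of the Riesz representation theorem. Both are routine.
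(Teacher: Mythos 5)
Your proposal is correct and follows the same route the paper takes: the paper simply observes that the lemma follows from Lemma \ref{lem:volumePreserving} together with Definition \ref{defn:invariantMeasure}, and your argument fills in exactly those routine details (change of variables for the orientation-preserving diffeomorphism $\phi_X^t$, then the standard extension from continuous to measurable test functions). Nothing is missing and nothing is done differently in substance.
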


Although the definition of an invariant measure is given for any Borel measure on $M$, the following lemma gives us an analytically more useful definition for Radon measures. 

\begin{lem} \label{lem:linearizedInvariance}
A Radon measure $\sigma$ is an invariant measure of a vector field $X$ if and only if, for any $C^1$ function $f: M \to \mathbb{R}$, one has
$$\sigma(\nabla_X f) = 0.$$
\end{lem}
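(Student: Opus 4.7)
The plan is to prove both directions by translating the flow-invariance condition into a statement about the time-derivative $\frac{d}{dt}\sigma(f \circ \phi_X^t)$ at $t=0$, using the identity $\frac{d}{dt}(f \circ \phi_X^t) = (\nabla_X f) \circ \phi_X^t$ and a standard differentiation-under-the-integral-sign argument that is legitimate here because $M$ is closed (hence $\sigma$ is finite) and $\nabla_X f$ is continuous (hence uniformly bounded).

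For the forward direction, I would assume $\sigma$ is invariant. For any $C^1$ function $f$, set $g(t) = \sigma(f \circ \phi_X^t)$. By invariance $g$ is constant, so $g'(0) = 0$. To compute $g'(0)$, I would use the chain rule identity $\frac{d}{dt}\big|_{t=0}(f \circ \phi_X^t)(p) = \nabla_X f(p)$, together with the uniform estimate $\left|\tfrac{1}{t}\bigl((f\circ\phi_X^t)(p) - f(p)\bigr) - \nabla_X f(p)\right| \le C$ (independent of $p$ and small $t$, since $M$ is compact and $f$ is $C^1$) to pass the limit inside the integral via dominated convergence. This yields $\sigma(\nabla_X f) = g'(0) = 0$.

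For the reverse direction, I would assume $\sigma(\nabla_X h) = 0$ for every $C^1$ function $h$, and aim to show $g(t) = \sigma(f \circ \phi_X^t)$ is constant for every $C^1$ function $f$. The key computation is the chain rule $\nabla_X(f \circ \phi_X^t) = (\nabla_X f) \circ \phi_X^t$, obtained by writing
\begin{equation*}
\nabla_X(f \circ \phi_X^t)(p) = \frac{d}{ds}\Big|_{s=0}(f \circ \phi_X^{t+s})(p) = (\nabla_X f)(\phi_X^t(p)).
\end{equation*}
Applying the same differentiation-under-the-integral argument as above at an arbitrary time $t$ gives $g'(t) = \sigma(\nabla_X(f \circ \phi_X^t))$. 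Since $f \circ \phi_X^t$ is itself a $C^1$ function on $M$, the hypothesis forces $g'(t) = 0$ for every $t$, so $g$ is constant and $\sigma(f \circ \phi_X^t) = \sigma(f)$ for all $C^1$ functions $f$ and all $t$.

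The only remaining step is to upgrade this equality from $C^1$ test functions to arbitrary $\sigma$-measurable functions, which is where I expect the only mild subtlety to lie. I would argue that $C^1(M)$ is dense in $C^0(M)$, so $\sigma(f \circ \phi_X^t) = \sigma(f)$ extends to all $f \in C^0(M)$; by the Riesz representation theorem this identifies the Radon measure $(\phi_X^t)_* \sigma$ with $\sigma$, and then the change-of-variables formula $\int h\, d((\phi_X^t)_*\sigma) = \int (h \circ \phi_X^t)\, d\sigma$ for $\sigma$-measurable $h$ gives exactly the invariance condition in Definition \ref{defn:invariantMeasure}. The main obstacle is really just bookkeeping — in particular, being careful that the dominated-convergence argument uses the compactness of $M$ and the $C^1$ regularity of $f$ in an essential way — but there is no deep analytic difficulty.
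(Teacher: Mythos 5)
Your proposal is correct and follows essentially the same route as the paper: reduce the invariance condition to $C^1$ test functions via density of $C^1(M)$ in $C^0(M)$ and continuity of $\sigma$, then relate $\frac{d}{dt}\sigma(f \circ \phi_X^t)$ to $\sigma(\nabla_X(f \circ \phi_X^t)) = \sigma((\nabla_X f)\circ \phi_X^t)$, using compactness of $M$ and the $C^1$ regularity of $f$ to justify interchanging $\sigma$ with the limit. The only cosmetic difference is in the reverse direction, where you show $g'(t) = 0$ for every $t$ and conclude $g$ is constant, whereas the paper writes $f \circ \phi_X^t - f$ as an integral of Lie derivatives along the flow and applies $\sigma$ to the integrand; both arguments hinge on the same chain-rule identity.
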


\begin{proof}
First, we make the observation that $\sigma$ is invariant if and only if
$$\sigma(f \circ \phi_X^t) = \sigma(f)$$
for any $t > 0$ and any $C^1$ function $f$. 

This follows from the fact that $C^1$ functions are dense in the space of $C^0$ functions on $M$ and the fact that $\sigma$ is a continuous functional. 

Next, suppose that $\sigma$ satisfies the above property. By definition, for any $C^1$ function $f$,
$$\nabla_X f = \lim_{t \to 0} t^{-1}( (f \circ \phi_X^t) - f).$$

Therefore,
$$\sigma(\nabla_X f) = \lim_{t \to 0} t^{-1}(\sigma(f \circ \phi_X^t) - \sigma(f)) = 0.$$

Suppose instead 
$$\sigma(\nabla_X f) = 0$$
for any $C^1$ function $f$.

Note that the continuity and linearity of $\sigma$ implies that if $\{f_s\}_{s \in [0,1]}$ is any one-parameter family of continuous functions on $M$, one has
$$\sigma(\int_0^1 f_s ds) = \int_0^1 \sigma(f_s)ds.$$

We observe that by definition,
$$f \circ \phi_X^t - f = \int_0^1 \nabla_X(f \circ \phi_X^{st}) ds.$$

Then applying the above identity, one finds
\begin{align*}
    \sigma(f \circ \phi_X^t) - \sigma(f) &= \sigma(\int_0^1 \nabla_X(f \circ \phi_X^{st}) ds) \\
    &= \int_0^1 \sigma(\nabla_X(f \circ \phi_X^{st})) ds \\
    &= 0
\end{align*}
as desired. 

\end{proof}

Finally, we can state what it means for a vector field $X$ to be \emph{uniquely ergodic}.

\begin{defn} \label{defn:uniquelyErgodic}
A vector field $X$ is \textbf{uniquely ergodic} if and only if, up to multiplication by a constant, it has a unique non-trivial invariant measure. 
\end{defn}

In the case where $X$ is a Hamiltonian vector field associated to a framed Hamiltonian manifold $(M, \lambda, \omega)$, recall that the volume form $\lambda \wedge \omega^n$ is a non-trivial invariant measure of $X$ (this is the statement of Lemma \ref{lem:volumeMeasurePreserving}). It follows that to show non-unique ergodicity, we need only construct some other non-trivial invariant measure.

\begin{lem} \label{lem:hamiltonianUniquelyErgodic}
The Hamiltonian vector field $X$ associated to a framed Hamiltonian manifold $(M, \lambda, \omega)$ is not uniquely ergodic if and only if there is a non-trivial invariant measure $\sigma$ that is not equal to a constant multiple of $\lambda \wedge \omega^n$. 
\end{lem}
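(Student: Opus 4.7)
The plan is to prove both directions of the biconditional directly from Definition \ref{defn:uniquelyErgodic}, leveraging the fact already recorded in Lemma \ref{lem:volumeMeasurePreserving} that the volume form $\lambda \wedge \omega^n$ is itself a non-trivial invariant Radon measure. So $\lambda \wedge \omega^n$ is always available as one ``distinguished'' invariant measure, and the question of unique ergodicity reduces to whether every other invariant measure is a scalar multiple of it.

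For the forward direction, I would suppose that $X$ is not uniquely ergodic. Unwinding Definition \ref{defn:uniquelyErgodic}, this means that there exist two non-trivial invariant measures $\sigma_1, \sigma_2$ that are not scalar multiples of one another. Since $\lambda \wedge \omega^n$ is itself one such non-trivial invariant measure, at least one of $\sigma_1, \sigma_2$ must fail to be a constant multiple of $\lambda \wedge \omega^n$ (otherwise $\sigma_1$ and $\sigma_2$ would both be scalar multiples of $\lambda \wedge \omega^n$, hence of each other). That measure is the desired $\sigma$.

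For the reverse direction, I would suppose that there is a non-trivial invariant measure $\sigma$ that is not a constant multiple of $\lambda \wedge \omega^n$. Then $\sigma$ and $\lambda \wedge \omega^n$ are two non-trivial invariant measures that are not scalar multiples of each other, which directly contradicts unique ergodicity as formulated in Definition \ref{defn:uniquelyErgodic}.

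There is no real obstacle here: the lemma is essentially a restatement of the definition of unique ergodicity once one records that $\lambda \wedge \omega^n$ is always invariant. The only subtlety worth being careful about is ensuring both $\sigma$ and $\lambda \wedge \omega^n$ are \emph{non-trivial} (the latter since $M$ is closed and $\lambda \wedge \omega^n$ is a positive volume form by Definition \ref{defn:framedHamiltonianStructures}, so its total mass is strictly positive).
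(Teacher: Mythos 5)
Your proof is correct and matches the paper's approach: the paper states this lemma without a separate proof, treating it exactly as you do — an immediate unwinding of Definition \ref{defn:uniquelyErgodic} once Lemma \ref{lem:volumeMeasurePreserving} guarantees that $\lambda \wedge \omega^n$ is itself a non-trivial invariant measure. Your additional care in checking that two non-trivial constant multiples of $\lambda \wedge \omega^n$ are necessarily constant multiples of each other is a reasonable (if minor) elaboration of the same argument.
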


\section{$J$-holomorphic currents and the proof of Proposition \ref{prop:periodicOrbits}} \label{sec:currents}

\subsection{Preliminaries on $J$-holomorphic currents} Since the domain of an $\omega$-finite pseudoholomorphic does not necessarily have finite genus, we must make use of the compactness and regularity theory of \emph{$J$-holomorphic currents} to prove Proposition \ref{prop:periodicOrbits}. 

We collect the relevant definitions and results below. For readers in search of a more expansive exposition, we refer them to \cite{Simon14} for background on rectifiable currents, and \cite{DoanWalpuski21} for a detailed overview of the regularity theory for $J$-holomorphic currents in almost-complex manifolds. Our exposition below follows that of \cite[Section 4]{DoanWalpuski21}. 

First, we write down some basic definitions regarding general currents in smooth manifolds.

\begin{defn}
Let $W$ be a smooth manifold. The space $\Omega^m_c(W)$ denotes the space of smooth, compactly supported $m$-forms on $W$, topologized in the following manner. A sequence $\{\alpha_i\}$ converges in this topology if and only if for sufficiently large $i$, the forms $\alpha_i$ are all supported in some fixed compact set $K$ in $W$, and they converge uniformly in all derivatives on $K$ as $i \to \infty$. 
\end{defn}

If $W$ is compact, then the topology on $\Omega^m_c(W)$ agrees with the standard $C^\infty$ topology on the space $\Omega^m(W)$ of smooth $m$-forms. 

\begin{defn} \label{defn:currents}
A \textbf{$m$-dimensional current} on a smooth manifold $W$ is a real-valued, continuous linear functional on $\Omega^m_c(W)$. The \textbf{boundary} of a $m$-dimensional current $T$ is the $(m-1)$-dimensional current $\partial T$ defined by
$$\partial T(\alpha) = T(d\alpha).$$

A current $T$ is \textbf{closed} if $\partial T \equiv 0$. The \textbf{support} of an $m$-dimensional current is the complement of the maximal open set $U$ with respect to inclusion in $W$ such that 
$$T(\alpha) = 0$$
for any $m$-form $\alpha \in \Omega^m_c(U)$. 
\end{defn}

\begin{defn}
Let $W$ be a smooth manifold. Let $T_k$ be a sequence of $m$-dimensional currents. We say that $T_k$ \textbf{converges weakly} to an $m$-dimensional current $T$ if
$$\lim_{k \to \infty} T_k(\alpha) = T(\alpha)$$
for any $\alpha \in \Omega^k_c(W)$. 
\end{defn}

Next, we write down definitions and results regarding integer rectifiable currents in Riemannian manifolds. If $W$ is a smooth manifold with Riemannian metric $g$, we let $\mathcal{H}^m_g$ denote the $m$-dimensional Hausdorff measure on $W$ associated with the metric $g$. 

\begin{defn}
Let $W$ be a smooth manifold. Choose a Riemannian metric $g$. For any open subset $U$ of $W$, the \textbf{$U$-mass} of a $m$-dimensional current $T$ with respect to $g$, written as $\mathbf{M}_{g,U}(T)$, is defined to be the supremum of $|T(\alpha)|$, taken over all $m$-forms $\alpha \in \Omega^m_c(U)$ with $C^0$ norm bounded above by $1$. 
\end{defn}

\begin{lem} \label{lem:lowerSemicontinuityOfMass}
Let $W$ be a smooth manifold. Let $T_k$ be a sequence of $m$-dimensional currents, converging weakly to an $m$-dimensional current $T$. Then for any proper open subset $U$ of $W$ with compact closure,
$$\mathbf{M}_{g,U}(T) \leq \liminf_{k \to \infty} \mathbf{M}_{g, U}(T_k).$$
\end{lem}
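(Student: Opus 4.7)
The plan is to prove this directly from the definition of mass as a supremum and the definition of weak convergence, using a standard approximation argument.

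First I would fix $\epsilon > 0$ arbitrarily and, from the definition
\[
\mathbf{M}_{g,U}(T) = \sup\bigl\{|T(\alpha)| : \alpha \in \Omega^m_c(U),\ \|\alpha\|_{C^0} \leq 1\bigr\},
\]
select a test form $\alpha_\epsilon \in \Omega^m_c(U)$ with $\|\alpha_\epsilon\|_{C^0} \leq 1$ satisfying $|T(\alpha_\epsilon)| > \mathbf{M}_{g,U}(T) - \epsilon$ (or, if the mass is infinite, satisfying $|T(\alpha_\epsilon)| > \epsilon^{-1}$, which is handled analogously).

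Next I would apply the hypothesis that $T_k$ converges weakly to $T$. Since $\alpha_\epsilon$ lies in $\Omega^m_c(U) \subseteq \Omega^m_c(W)$, we have $T_k(\alpha_\epsilon) \to T(\alpha_\epsilon)$, so there exists $K_\epsilon$ with $|T_k(\alpha_\epsilon)| > |T(\alpha_\epsilon)| - \epsilon$ for all $k \geq K_\epsilon$. On the other hand, $\alpha_\epsilon$ is an admissible test form in the definition of $\mathbf{M}_{g,U}(T_k)$, so $|T_k(\alpha_\epsilon)| \leq \mathbf{M}_{g,U}(T_k)$. Combining these two inequalities gives $\mathbf{M}_{g,U}(T_k) > \mathbf{M}_{g,U}(T) - 2\epsilon$ for all $k \geq K_\epsilon$, whence $\liminf_k \mathbf{M}_{g,U}(T_k) \geq \mathbf{M}_{g,U}(T) - 2\epsilon$. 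Letting $\epsilon \to 0$ yields the claimed bound.

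There is essentially no hard step here; the assumptions that $U$ is open and that the test forms have compact support in $U$ are precisely what is needed to ensure $\alpha_\epsilon$ is a legitimate test form for each $T_k$, and the hypothesis that $\overline{U}$ is compact is not actually used in this direction of the argument (it would be relevant only if one were trying to prove equality or upper semicontinuity). The only minor subtlety worth mentioning in the write-up is the case $\mathbf{M}_{g,U}(T) = +\infty$, which is handled by the alternative choice of $\alpha_\epsilon$ indicated above, showing that $\liminf_k \mathbf{M}_{g,U}(T_k) = +\infty$ as well.
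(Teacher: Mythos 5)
Your proof is correct: it is the standard lower semicontinuity argument (near-optimal test form, weak convergence, then $\epsilon \to 0$), and the paper itself states Lemma \ref{lem:lowerSemicontinuityOfMass} without proof as a standard fact, so there is nothing to diverge from. Your side remarks are also accurate — the compact closure of $U$ plays no role in this inequality, and the infinite-mass case is handled exactly as you indicate.
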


\begin{defn}
Let $W$ be a smooth manifold. Choose a Riemannian metric $g$. A subset $A \subset W$ is \textbf{$m$-rectifiable} if there is a countable set of $m$-dimensional embedded $C^1$ submanifolds $\{M_i\}_{i=1}^\infty$ such that
$$A \subset \cup_{i = 1}^\infty M_i$$
and
$$\mathcal{H}^m_g(\cup_{i=1}^\infty M_i \setminus A) = 0.$$

If $A$ is $m$-rectifiable, then a choice of such a cover by $C^1$ submanifolds produces for $\mathcal{H}^m_g$-almost every $x \in A$ an \textbf{approximate tangent space}, denoted by $T_x A$. The space $T_x A$ is unique if it exists. 
\end{defn}

\begin{defn}
\label{defn:integerRectifiable}
Let $W$ be a smooth manifold equipped with a choice of Riemannian metric $g$. A $m$-dimensional current $T$ is \textbf{integer rectifiable} if for any proper open subset $U$ of $W$ with compact closure, $\mathbf{M}_{g,U}(T) < \infty$ and there exists the following set of data.
\begin{enumerate}
    \item A $m$-rectifiable set $A \subset W$.
    \item An $\mathcal{H}^m_g$-measurable function $\theta$ from $A$ to the nonnegative integers.
    \item An $\mathcal{H}^m_g$-measurable section $\overrightarrow{T}$ of $\Lambda^m TW$ over the set $A$.
\end{enumerate}
We require that, for $\mathcal{H}^m_g$-almost every $x \in A$, the $m$-vector $\overrightarrow{T}$ is given by $\overrightarrow{T}(x) = e_1 \wedge \ldots \wedge e_m$ for any orthonormal frame of $T_x A$, and the current $T$ is defined by the integral
$$T(\alpha) = \int_A \theta(x)\langle \alpha(x), \overrightarrow{T}(x) \rangle d\mathcal{H}^m_g(x).$$
\end{defn}

\begin{thm} \label{thm:federerFlemingCompactness}
\emph{(Federer-Fleming compactness theorem, \cite[Chapter 6, Theorem $3.11$]{Simon14})} Let $W$ be a smooth manifold without boundary, equipped with a Riemannian metric $g$. Let $T_k$ be a sequence of integer rectifiable $m$-dimensional currents, and let $\partial T_k$ denote their boundaries, given as currents on $W$. Suppose that the boundaries $\partial T_k$ are also integer rectifiable. If for any proper open subset $U$ of $W$ with compact closure, the sums
$$\mathbf{M}_{g,U}(T_k) + \mathbf{M}_{g, U}(\partial T_k)$$ 
are uniformly bounded in $k$, then after passing to a subsequence, the sequence $T_k$ converges weakly to an integer rectifiable $m$-dimensional current $T$, with integer rectifiable boundary $\partial T$. 
\end{thm}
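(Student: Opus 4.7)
The theorem in question is the classical Federer-Fleming closure (compactness) theorem, cited from Simon's book, so the task is to describe how one would actually prove this statement. The plan is to split it into a soft functional-analytic compactness step and a hard geometric-measure-theoretic regularity step.

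First, I would extract a weakly convergent subsequence by a standard Banach-Alaoglu argument. Fix an open subset $U \subset W$ with compact closure, and observe that the hypothesis $\sup_k \mathbf{M}_{g,U}(T_k) < \infty$ says that $\{T_k|_U\}$ is a bounded family of linear functionals when $\Omega^m_c(U)$ is equipped with the $C^0$ norm on forms. Since the space of compactly supported smooth forms on a fixed precompact $U$ is separable, a diagonal argument on a countable dense collection of test forms yields a subsequence along which $T_k(\alpha)$ converges for every $\alpha \in \Omega^m_c(U)$. A further diagonalization over a nested exhaustion $U_1 \subset U_2 \subset \cdots$ of $W$ produces a limit current $T$ with $T_k \to T$ weakly. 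The mass bound on $T$ follows from the lower-semicontinuity Lemma \ref{lem:lowerSemicontinuityOfMass}, and the corresponding bound on $\partial T$ follows by applying the same lemma to the boundaries, using that $\partial T_k \to \partial T$ weakly is immediate from the definition $\partial T(\alpha) = T(d\alpha)$.

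The main obstacle is showing that both $T$ and $\partial T$ are again integer rectifiable; this is the content of the closure theorem proper, and it is decidedly nontrivial. My approach would be to reduce to the Euclidean setting by isometrically embedding $W \hookrightarrow \mathbb{R}^N$ via Nash (or working locally in Riemannian normal coordinates) and pushing the currents forward, after which rectifiability and integer multiplicity are intrinsic properties that can be verified in $\mathbb{R}^N$. In the Euclidean setting I would invoke two foundational tools. The first is the \emph{deformation theorem}, which approximates each $T_k$ by an integer-multiplicity polyhedral chain on a cubical grid of scale $\rho$, with the mass of $T_k - P_{k,\rho}$ and of $\partial T_k - \partial P_{k,\rho}$ controlled linearly by $\rho \bigl(\mathbf{M}(T_k) + \mathbf{M}(\partial T_k)\bigr)$. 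The second is the Besicovitch-Federer structure theorem for sets of finite $m$-dimensional Hausdorff measure, combined with slicing, which shows that any weak limit of such polyhedral approximations is carried by a rectifiable set and has $\mathcal{H}^m$-almost everywhere integer multiplicity. Passing to the limit first in $k$ and then refining $\rho \to 0$ produces the desired integer rectifiable representative of $T$, and the identical argument applied to the uniformly bounded family $\{\partial T_k\}$ yields integer rectifiability of $\partial T$.

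I expect the regularity step to dominate the difficulty by a wide margin. The weak-$\ast$ compactness and the lower-semicontinuity of mass are formal, but the deformation theorem requires delicate quantitative cubical-approximation estimates, and the structure/slicing part rests on real-variable machinery (densities, rectifiability criteria in the style of Preiss or Besicovitch-Federer). In the application to $J$-holomorphic currents in this paper, the currents one wishes to take limits of are already integer rectifiable with a great deal of geometric structure, so in practice one needs mainly the soft compactness together with the statement that integer rectifiability passes to weak limits under a two-sided mass bound, which is exactly the content extracted here.
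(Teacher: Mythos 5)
Your proposal matches the paper's treatment: the paper offers no proof of this classical theorem beyond citing \cite[Chapter 6, Theorem 3.11]{Simon14} for the Euclidean case and noting, in Remark \ref{rem:nashEmbedding}, exactly the Nash-embedding reduction you describe, while your additional sketch of the Euclidean closure theorem (weak-$*$ compactness plus the deformation and structure theorems) is the standard Federer--Fleming argument. One small imprecision worth noting: the deformation theorem controls the \emph{flat norm} of $T_k - P_{k,\rho}$ (i.e., it writes the difference as $Q + \partial R$ with $\mathbf{M}(Q) + \mathbf{M}(R) \lesssim \rho(\mathbf{M}(T_k) + \mathbf{M}(\partial T_k))$), not its mass, but this does not affect the argument.
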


\begin{rem} \label{rem:nashEmbedding}
The exact statement of the Federer-Fleming compactness theorem given in \cite[Chapter 6, Theorem $3.11$]{Simon14} concerns the case where $W$ is an open subset of Euclidean space with the Euclidean metric. To prove the statement of Theorem \ref{thm:federerFlemingCompactness} for general manifolds, it suffices to apply the Nash embedding theorem and isometrically embed the Riemannian manifold $(W, g)$ into the Euclidean space $\mathbb{R}^N$. Then we can think of the currents $T_k$ as currents in $\mathbb{R}^N$ and deduce the theorem from the Federer-Fleming compactness theorem in the Euclidean setting. 
\end{rem}

The definitions and results below discuss \emph{semicalibrated} currents and their regularity properties in dimension two. 

\begin{defn}
Let $W$ be a smooth manifold equipped with a choice of Riemannian metric $g$. Let $T$ be a closed, integer rectifiable $m$-dimensional current. The \textbf{regular set} $\text{Reg}(T)$ of $T$ is the largest relatively open set $U$ in the support of $T$ such that the following condition holds. There exists an embedded, oriented $C^1$ $m$-dimensional oriented submanifold $\Sigma \subset W$ and a locally constant function $\theta$ on $\Sigma$ valued in the nonnegative integers such that for any $\alpha \in \Omega^k_c(U)$,
$$T(\alpha) = \int_\Sigma \theta \cdot \alpha.$$

The \textbf{singular set} $\text{Sing}(T)$ of $T$ is the complement in the support of $T$ of $\text{Reg}(T)$. 
\end{defn}

\begin{defn}
Let $W$ be a smooth manifold equipped with a choice of Riemannian metric $g$. A \textbf{$m$-semicalibration} on $W$ is a continuous $m$-form $\sigma$ such that $|\sigma|_{C^0} \leq 1$. A closed, integer rectifiable $m$-dimensional current $T$ is \textbf{semicalibrated} by $\sigma$ if 
$$\langle \sigma(x), \overrightarrow{T}(x) \rangle = 1$$
for $\mathcal{H}^m_g$-almost every point $x$ in the support of $T$. 
\end{defn}

\begin{lem}[Mass bound for semicalibrated currents]
\label{lem:semiCalibratedMass} Let $W$ be a smooth manifold without boundary, equipped with a choice of Riemannian metric $g$. Suppose there exists an $m$-semicalibration $\sigma$. Then a closed, integer rectifiable $m$-dimensional current $T$ that is semicalibrated by $\sigma$ satisfies the following bounds. For any proper open subset $U$ of $W$ with compact closure, and any smooth, nonnegative compactly supported function $\chi$ on $W$ that is equal to $1$ on $U$,
$$\mathbf{M}_{g,U}(T) \leq T(\chi\sigma).$$
\end{lem}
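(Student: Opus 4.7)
The plan is to express both sides of the inequality as integrals against the $m$-dimensional Hausdorff measure on the rectifiable set $A$ underlying $T$, using the representation from Definition \ref{defn:integerRectifiable}, and then compare them pointwise.

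First, I would fix an arbitrary test form $\alpha \in \Omega^m_c(U)$ with $|\alpha|_{C^0} \leq 1$ and expand
$$T(\alpha) = \int_A \theta(x)\,\langle \alpha(x), \overrightarrow{T}(x)\rangle\, d\mathcal{H}^m_g(x).$$
At $\mathcal{H}^m_g$-almost every $x \in A$, the multivector $\overrightarrow{T}(x) = e_1\wedge\cdots\wedge e_m$ is built from a $g$-orthonormal frame of the approximate tangent space, so the pointwise bound $|\langle \alpha(x), \overrightarrow{T}(x)\rangle| \leq |\alpha(x)|_g \leq 1$ holds. Since $\alpha$ is supported in $U$, the integrand vanishes off $A \cap U$, so
$$|T(\alpha)| \leq \int_{A \cap U} \theta(x)\, d\mathcal{H}^m_g(x).$$

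Next, I would apply the same representation to $T(\chi\sigma)$. The semicalibration hypothesis $\langle \sigma(x), \overrightarrow{T}(x)\rangle = 1$ for $\mathcal{H}^m_g$-a.e. $x \in A$ collapses the integrand to
$$T(\chi\sigma) = \int_A \theta(x)\,\chi(x)\, d\mathcal{H}^m_g(x).$$
Using $\theta \geq 0$, $\chi \geq 0$, and $\chi\equiv 1$ on $U$, I obtain
$$\int_{A \cap U} \theta(x)\, d\mathcal{H}^m_g(x) \leq \int_A \theta(x)\,\chi(x)\, d\mathcal{H}^m_g(x) = T(\chi\sigma).$$
Chaining the two inequalities and taking the supremum over admissible $\alpha$ yields $\mathbf{M}_{g,U}(T) \leq T(\chi\sigma)$.

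There is no genuine obstacle here: the whole argument is a direct unpacking of the definitions of mass, integer rectifiability, and semicalibration. The only subtle point worth flagging is the pointwise estimate $|\langle \alpha(x), \overrightarrow{T}(x)\rangle| \leq |\alpha(x)|_g$, which is the standard mass/comass duality for unit simple $m$-vectors; it reduces to the observation that pairing an $m$-form with $e_1 \wedge \cdots \wedge e_m$ is just evaluation on a $g$-orthonormal input, whose absolute value is bounded by the $g$-norm of the form.
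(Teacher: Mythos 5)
Your proof is correct and follows essentially the same route as the paper's: both expand $T(\alpha)$ via the integer-rectifiable representation, use the comass bound $|\langle \alpha(x), \overrightarrow{T}(x)\rangle| \leq 1$ together with the semicalibration identity $\langle \sigma(x), \overrightarrow{T}(x)\rangle = 1$, and then use $\chi \geq 0$ with $\chi \equiv 1$ on $U$ to pass from $A \cap U$ to $T(\chi\sigma)$. The only cosmetic difference is that you split the chain of inequalities into two separate estimates, whereas the paper writes it as a single chain.
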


\begin{proof}
Suppose $T$ is given by the data $(A, \theta, \overrightarrow{T})$ as in Definition \ref{defn:integerRectifiable}. 

Suppose that $T$ is semicalibrated by $\sigma$. 

Let $\alpha \in \Omega^k_c(U)$ be an $m$-form compactly supported in $U$ with $C^0$ norm bounded above by $1$. Recall that for $\mathcal{H}^m_g$-almost every $x \in A$, the following two properties are satisfied. First, the $m$-vector $\overrightarrow{T}(x)$ is equal to $e_1 \wedge \ldots \wedge e_m$ for some orthonormal basis $\{e_i\}_{i=1}^m$ of the approximate tangent space $T_x A$. Second, we have the equality
$$\langle \sigma(x), \overrightarrow{T}(x) \rangle = 1.$$

The $C^0$ bound on $\alpha$ then implies that on $\mathcal{H}^m_g$-almost every $x \in A$,
$$|\langle \alpha(x), \overrightarrow{T}(x) \rangle| \leq 1 = \langle \sigma(x), \overrightarrow{T}(x) \rangle.$$

Since $\chi \equiv 1$ on the open set $U$, we deduce the inequality
\begin{align*}
    |T(\alpha)| &\leq \int_A \theta(x) |\langle \alpha(x), \overrightarrow{T}(x) \rangle| d\mathcal{H}^m_g(x) \\
    &= \int_{A \cap U} \theta(x) |\langle \alpha(x), \overrightarrow{T}(x) \rangle| d\mathcal{H}^m_g(x) \\
    &\leq \int_{A \cap U} \theta(x) \langle \sigma(x), \overrightarrow{T}(x) \rangle d\mathcal{H}^m_g(x) \\
    &\leq \int_{A} \theta(x) \langle \chi(x)\sigma(x), \overrightarrow{T}(x) \rangle d\mathcal{H}^m_g(x) \\
    &= T(\chi\sigma).
\end{align*}

Taking the supremum of $|T(\alpha)|$ over all $m$-forms $\alpha \in \Omega^k_c(U)$ with $C^0$ norm bounded above by $1$ yields the desired bound on $\mathbf{M}_{g,U}(T)$. 
\end{proof}

Below we state a powerful regularity theorem for two-dimensional semicalibrated currents, due to De Lellis-Spadaro-Spolaor. It gives an explicit local model for a closed, integer rectifiable two-dimensional semicalibrated current near any point. Roughly, the current is given, for some integer $k \in \mathbb{N}$, by integration with multiplicity over an embedding of the standard $k$-fold branched cover of the two-disk in $\mathbb{R}^{2n+2}$. 

We define this local model below.

\begin{defn} \label{defn:kBranching}
Given $k \in \mathbb{N}$, set
$$\widetilde{D}^k = \{(z,w) \in \mathbb{C}^2\,|\,z = w^k\text{ and }|z| < 1\}.$$

For $\beta \in (0,1)$, let $f: \widetilde{D}^k \to \mathbb{R}^{2n}$ be a continuous injective map which is $C^{3,\beta}$ on $\widetilde{D}^k$, satisfies $f(0) = 0$, and satisfies $|\nabla f(z,w)| \leq \kappa |z|^\alpha$ for some absolute constant $\kappa > 0$. Define an associated map $\underline{f}: \widetilde{D}^k \to \mathbb{R}^{2n+2}$ by
$$\underline{f}(z,w) = (z, f(z,w)).$$

Now let $W$ be a smooth manifold of dimension $2n+2$ and $\phi: U \to W$ a smooth chart where $U$ is an open subset of $\mathbb{R}^{2n+2}$. The \textbf{$k$-branching associated with $f$ and $\phi$} is the integer rectifiable current $G_{f,\phi}$ given by
$$G_{f,\phi}(\alpha) = \int_{\widetilde{D}^k\setminus\{0\}} \underline{f}^*\phi^*\alpha$$
for any $\alpha \in \Omega^2_c(W)$. 
\end{defn}

\begin{thm} \label{thm:semicalibratedRegularity}
\cite{DSS17a, DSS17b} Let $W$ be a smooth manifold of dimension $2n+2$ equipped with a choice of Riemannian metric $g$. Let $T$ be a closed, integer rectifiable $2$-dimensional current, semicalibrated by a $2$-semicalibration $\sigma$. Then the following two properties hold:
\begin{itemize}
    \item For any point $x \in \text{supp}(T) \setminus \text{supp}(\partial T)$, there exists a neighborhood $U$ of $x$ in $W$, a finite collection of charts $\{\phi_i\}_{i=1}^L$ with $\phi_i(0) = x$ for every $i$ and maps $\{f_i\}_{i=1}^L$ as in Definition \ref{defn:kBranching} and weights $\{m_i\}_{i=1}^L$ such that
    $$T|_U = \sum_{i=1}^L m_i G_{f_i,\phi_i}.$$
    \item The singular set $\text{Sing}(T)$ is discrete. 
\end{itemize}
\end{thm}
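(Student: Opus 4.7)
The plan is to reduce the regularity theorem for two-dimensional semicalibrated currents to three distinct inputs: (i) an almost-monotonicity formula at every point, (ii) classification and uniqueness of tangent cones, and (iii) a branched center manifold approximation by multi-valued harmonic functions. This is the strategy of Chang in the area-minimizing case, generalized to the semicalibrated setting by De Lellis-Spadaro-Spolaor; I would follow that framework.

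First I would fix $x \in \text{supp}(T) \setminus \text{supp}(\partial T)$ and use the semicalibration inequality from Lemma \ref{lem:semiCalibratedMass} to show that $T$ is almost area-minimizing in $B_r(x)$: for any integer rectifiable competitor $T'$ with $\partial T' = \partial T$ inside $B_r(x)$, a Stokes-theorem computation comparing $T(\chi\sigma)$ and $T'(\chi\sigma)$ gives a mass bound of the form $\mathbf{M}_{g, B_r(x)}(T) \leq \mathbf{M}_{g, B_r(x)}(T') + C r^{2+\alpha}$, with $\alpha$ depending on the modulus of continuity of $\sigma$. Standard comparison with homothetic cones then yields an almost-monotone density ratio $r^{-2}\mathbf{M}_{g,B_r(x)}(T)$, so the density $\Theta(x,T)$ is well-defined, upper semicontinuous, and integer valued on $\text{supp}(T)$.

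Second, I would perform blow-up analysis. Rescaling $T$ at $x$ produces, after passing to a subsequence (using Theorem \ref{thm:federerFlemingCompactness} and Lemma \ref{lem:lowerSemicontinuityOfMass}), a tangent cone $T_\infty$ which, by the almost-minimizing property and the vanishing of the error term in the blow-up limit, is area-minimizing in $\mathbb{R}^{2n+2}$. In dimension two any such cone is of the form $\sum_i m_i [L_i]$ for oriented 2-planes $L_i$ and positive integers $m_i$. To promote subsequential convergence to \emph{uniqueness} of the tangent cone, I would combine an Almgren-style frequency function monotonicity (again modulo controlled error from the semicalibration) with a \L ojasiewicz-Simon epiperimetric inequality; these force polynomial decay of the $L^2$-distance between rescalings of $T$ and the candidate cone, which integrates to uniqueness.

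The main obstacle is the branched center manifold and the resulting multi-valued graph representation, which is where the real technical heft lies. Once the tangent cone at $x$ is shown to be a plane with multiplicity $k$ (in the simplest case; for cones that split as multiple planes one works component by component), I would construct a $C^{3,\beta}$ center manifold $\mathcal{M}$ — a classical 2-dimensional submanifold of $W$ through $x$ tangent to that plane — approximating the "average sheet" of $T$, then represent $T$ nearby as the graph of an Almgren $\mathcal{A}_k$-valued function $u$ over $\mathcal{M}$ that solves an almost-Dirichlet-minimizing problem. Singularity analysis of Dir-minimizing $\mathcal{A}_k$-valued maps in two domain dimensions (the content of Chang's and DSS's singularity theorems) shows that $u$ becomes \emph{single-valued and $C^{3,\beta}$} after pullback by the $k$-fold branched cover $\widetilde{D}^k \to D$, $(z,w) \mapsto z$, with gradient decay $|\nabla u(z,w)| \leq \kappa|z|^\alpha$. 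This single-valued pullback is exactly the map $f$ in Definition \ref{defn:kBranching}, and composing with a chart $\phi$ at $x$ gives the local decomposition $T|_U = \sum_{i=1}^L m_i G_{f_i,\phi_i}$. Discreteness of $\text{Sing}(T)$ then follows since each $k$-branching has its unique singular point at the branch locus $\{0\}$, so singular points cannot accumulate inside any such neighborhood, and a covering argument finishes the proof.
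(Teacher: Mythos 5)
The paper does not actually prove this statement: Theorem \ref{thm:semicalibratedRegularity} is imported wholesale from \cite{DSS17a, DSS17b}, with Remark \ref{rem:onSemicalibratedRegularity} noting only that (a) the second bullet follows from the first because a $k$-branching has at most one singular point, (b) the extension from $\mathbb{R}^{2n+2}$ to a general Riemannian manifold is handled by Nash embedding as in Remark \ref{rem:nashEmbedding}, and (c) the extraction of the explicit local decomposition from the cited works is carried out in \cite[Theorem $4.13$]{DoanWalpuski21}. Your outline is, by contrast, a reconstruction of the internal architecture of the cited proof itself, and as a description of that architecture it is essentially accurate: the semicalibration yields almost area-minimality and hence almost-monotonicity of the density ratio; two-dimensional tangent cones are sums of planes, made unique via an epiperimetric inequality; and the local structure comes from a branched center manifold together with Chang-type analysis of two-dimensional Dir-minimizing $\mathcal{A}_k$-valued maps. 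What this buys is an explanation of \emph{why} the theorem is true, which the paper deliberately forgoes; what it does not buy is a proof, since each of your three inputs (the epiperimetric inequality for semicalibrated two-dimensional currents, the branched center manifold construction, and the multivalued approximation and singularity analysis) is itself the content of one of the long papers being cited, and none is established in your sketch. Two smaller imprecisions: the center manifold in the branched case is not a classical two-dimensional submanifold through $x$ but is itself a $C^{3,\beta}$ graph over the branched cover $\widetilde{D}^k$ (this is the point of the ``branched'' center manifold), and you do not address the passage from the Euclidean statement to a general metric $g$, which the paper handles by isometric embedding. Since the paper's intent is to use this theorem as a black box, citing it as the paper does is the appropriate move; if you wanted to include your outline, it should be clearly labeled as a sketch of the cited proof rather than a proof.
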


\begin{rem} \label{rem:onSemicalibratedRegularity} The second statement of Theorem \ref{thm:semicalibratedRegularity} follows from the first. This follows from the fact that a $k$-branching has by definition at most one singular point. 

The second statement is proved in \cite{DSS17a, DSS17b} for $W = \mathbb{R}^{2n+2}$ with the Euclidean metric. It is then proved for any manifold via the Nash embedding theorem as in Remark \ref{rem:nashEmbedding}.

The first statement is not stated explicitly, but is implicit in the work of \cite{DSS17a, DSS17b}. We refer the reader to \cite[Theorem $4.13$]{DoanWalpuski21} for a detailed account of how this statement can be recovered from the work of \cite{DSS17a, DSS17b}. \end{rem}

We conclude by writing down definitions and results regarding two-dimensional $J$-holomorphic currents in almost-Hermitian manifolds. 

\begin{defn} \label{defn:jHolomorphicCurrents}
Let $(W, J,g)$ be an almost-Hermitian manifold. 

A two-dimensional current $T$ is \textbf{$J$-holomorphic} if for any compactly supported two-form $\alpha$ on $W$, we have the identity
$$T(\alpha) = T( \alpha(J-, J-)).$$

Suppose $T$ is integer rectifiable, given by the data $(A, \theta, \overrightarrow{T})$ as in Definition \ref{defn:integerRectifiable}. Then it is $J$-holomorphic if and only if for $\mathcal{H}^2_g$-almost-every point $x$ in the support of $T$, the approximate tangent plane $T_x A$ is $J$-invariant. 
\end{defn}

\begin{lem} \label{lem:jHolomorphicSemicalibration}
Let $(W, J,g)$ be an almost-Hermitian manifold. Write $\sigma$ for the two-form defined by
$$\sigma(-, -) = g(J-, -).$$

Then $\sigma$ is a $2$-semicalibration. A closed, integer rectifiable $2$-dimensional current $T$ is $J$-holomorphic if and only if it is semicalibrated by $\sigma$. 
\end{lem}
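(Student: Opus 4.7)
The plan is to reduce both claims of the lemma to pointwise statements on unit simple $2$-vectors at each point $x \in W$, using the almost-Hermitian compatibility $g(J\cdot, J\cdot) = g(\cdot, \cdot)$, and then invoke the characterization of $J$-holomorphicity in terms of $J$-invariance of approximate tangent planes given in Definition \ref{defn:jHolomorphicCurrents}.

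First I would verify that $\sigma$ is a $2$-semicalibration. The $C^0$ norm of a $2$-form at $x$ is computed as the supremum over unit simple $2$-vectors $e_1 \wedge e_2$ (i.e., $(e_1,e_2)$ orthonormal with respect to $g$). For such a pair,
\[
\sigma(e_1,e_2) = g(Je_1, e_2),
\]
so Cauchy--Schwarz together with $|Je_1|_g = |e_1|_g = 1$ yields $|\sigma(e_1,e_2)| \leq 1$. This gives $|\sigma|_{C^0} \leq 1$ pointwise and hence $\sigma$ is a $2$-semicalibration. I would also record the equality case: $\sigma(e_1,e_2) = 1$ holds if and only if $Je_1$ and $e_2$ are positively parallel unit vectors, i.e.\ $e_2 = Je_1$. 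Thus $\langle \sigma(x), v \rangle = 1$ on a unit simple $2$-vector $v = e_1 \wedge e_2$ precisely when the plane spanned by $e_1, e_2$ is $J$-invariant and oriented by the complex structure.

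For the equivalence, suppose $T = (A, \theta, \overrightarrow{T})$ is an integer rectifiable $2$-dimensional current semicalibrated by $\sigma$. Then $\langle \sigma(x), \overrightarrow{T}(x) \rangle = 1$ for $\mathcal{H}^2_g$-almost every $x \in A$; writing $\overrightarrow{T}(x) = e_1 \wedge e_2$ in an orthonormal frame of $T_x A$, the equality case above forces $e_2 = Je_1$, so $T_x A$ is $J$-invariant almost everywhere, and $T$ is $J$-holomorphic by Definition \ref{defn:jHolomorphicCurrents}. Conversely, if $T$ is $J$-holomorphic, then $T_x A$ is $J$-invariant a.e.; writing $\overrightarrow{T}(x) = e_1 \wedge e_2$ with the complex orientation ($e_2 = Je_1$) gives $\langle \sigma(x), \overrightarrow{T}(x)\rangle = g(Je_1, Je_1) = 1$, so $T$ is semicalibrated by $\sigma$.

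The main delicate point I expect is the orientation issue in the converse direction: $J$-invariance of a $2$-plane alone admits two orientations, and on the reversed orientation $\langle \sigma, \overrightarrow{T}\rangle$ would equal $-1$ rather than $+1$. I would handle this by observing that for integer rectifiable $J$-holomorphic currents (with nonnegative multiplicity $\theta$, as in Definition \ref{defn:integerRectifiable}), the orientation on each $J$-invariant approximate tangent plane is fixed to be the complex one induced by $J$---this is the convention consistent with currents arising as pushforwards of $J$-holomorphic maps from oriented Riemann surfaces, and is preserved under the Federer--Fleming compactness theorem \ref{thm:federerFlemingCompactness}, which is the only way such currents arise in the paper. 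Once this convention is adopted, the two directions match up pointwise and the lemma is established.
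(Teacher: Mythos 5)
Your proof is correct and follows essentially the same route as the paper's: Cauchy--Schwarz gives $|\sigma|_{C^0}\le 1$, the equality case identifies the calibrated planes as the $J$-invariant ones, and Definition \ref{defn:jHolomorphicCurrents} closes the loop. You are in fact more careful than the paper on one point: its equality statement ``$|g(JV,W)|\le 1$ with equality iff $W=JV$'' silently drops the case $W=-JV$, and since the defining identity $T(\alpha)=T(\alpha(J-,J-))$ is insensitive to the orientation of a $J$-invariant plane, the direction ``$J$-holomorphic $\Rightarrow$ semicalibrated'' genuinely requires the orientation convention you spell out (complex orientation on the approximate tangent planes, as holds for pushforwards of $J$-holomorphic maps and is preserved under the weak limits in which these currents arise).
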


\begin{proof}
We observe by the Cauchy-Schwarz inequality that for any two tangent vectors $V$ and $W$ of norm $1$, we have
$$|\sigma(V, W)| = |g(JV, W)| \leq 1$$
with equality if and only if $W = JV$. 

It follows from this inequality that $\sigma$ has $C^0$ norm bounded above by $1$, so it is a $2$-semicalibration. The equality condition implies that a closed, integer rectifiable current $T$ is semicalibrated by $\sigma$ if and only if, for $\mathcal{H}^2_g$-almost every point $x$ in the support of $T$, the approximate tangent plane to $T$ is spanned by the set $\{V, JV\}$ where $V$ is a unit length tangent vector. This is true if and only if the approximate tangent plane is $J$-invariant for $\mathcal{H}^2_g$-almost every point $x$ in the support of $T$, which in turn is true if and only $T$ is $J$-holomorphic. 
\end{proof}

\begin{lem} \label{lem:jHolomorphicCurvesCurrents}
Let $(W, J,g)$ be an almost-Hermitian manifold. Write $\sigma$ for the semicalibration associated to the almost-Hermitian structure $(g, J)$ defined in Lemma \ref{lem:jHolomorphicSemicalibration}. Let
$$\mathbf{v} = (v, C, j, W, J, D, \mu)$$
be a $J$-holomorphic curve in $W$, and let $\theta$ be a locally constant function on $C$ taking values in the positive integers. Then the $2$-dimensional current $T$ defined by
$$T(\alpha) = \int_C \theta \cdot v^*\alpha$$
is integer rectifiable and $J$-holomorphic. 
\end{lem}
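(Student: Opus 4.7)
The plan is to exhibit $T$ in the integer rectifiable form of Definition \ref{defn:integerRectifiable} by taking $A = v(C)$ minus a small exceptional set, with multiplicity $\theta'$ given by summing $\theta$ over preimages and with tangent $2$-vector $\overrightarrow{T}$ induced by the complex-linear pushforward under $dv$; the $J$-holomorphicity will then follow from Lemma \ref{lem:jHolomorphicSemicalibration} because the Cauchy-Riemann equation forces the approximate tangent planes to be $J$-invariant. The key geometric input is the classical fact, obtained from the Carleman similarity principle applied to $dv + J \circ dv \circ j = 0$, that on each connected component of $C$ on which $v$ is non-constant, the critical set $\mathrm{Crit}(v) = \{z : dv_z = 0\}$ is a discrete subset, and $v$ is a $C^\infty$ local immersion on the complement of $\mathrm{Crit}(v) \cup D \cup \mu \cup \partial C$.

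First I would verify local mass bounds: for any open $U \Subset W$ and any smooth compactly supported $\chi \geq 0$ equal to $1$ on $U$, one has $\mathbf{M}_{g,U}(T) \leq T(\chi\sigma) = \int_C \theta \cdot \chi\circ v \cdot v^*\sigma$, where $\sigma$ is the two-form from Lemma \ref{lem:jHolomorphicSemicalibration}; since $\sigma$ agrees pointwise with the area form on $J$-invariant planes, $v^*\sigma \geq 0$ is the area density of $v$ on $C$, and this integral is finite under the implicit local-area hypothesis on $v$ needed for $T$ to define a current at all. Next, for the rectifiable structure, I take $A$ to be the image under $v$ of the union (over non-constant connected components of $C$) of $C \setminus (\mathrm{Crit}(v) \cup D \cup \mu \cup \partial C)$; this is covered by countably many embedded $C^\infty$ two-submanifolds of $W$, so is $2$-rectifiable. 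I set
\[
\theta'(p) = \sum_{z \in v^{-1}(p)} \theta(z), \qquad \overrightarrow{T}(p) = \frac{e \wedge Je}{|e \wedge Je|},
\]
where $e$ is any nonzero vector in the $J$-invariant plane $dv_z(T_z C)$ for any $z \in v^{-1}(p)$; $J$-invariance of that plane follows from $J \circ dv = dv \circ j$, so $\overrightarrow{T}(p)$ is well-defined and independent of the choice of $z$. A standard change-of-variables argument using the local immersion property then yields
\[
T(\alpha) = \int_C \theta\, v^*\alpha = \int_A \theta'(p)\,\langle \alpha(p),\, \overrightarrow{T}(p)\rangle\,d\mathcal{H}^2_g(p),
\]
matching Definition \ref{defn:integerRectifiable}.

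For the second conclusion, $\overrightarrow{T}(p)$ is by construction $J$-invariant, so $\langle \sigma(p), \overrightarrow{T}(p)\rangle = 1$ for $\mathcal{H}^2_g$-a.e.\ $p \in A$. Hence $T$ is semicalibrated by $\sigma$, and Lemma \ref{lem:jHolomorphicSemicalibration} gives that $T$ is $J$-holomorphic. The main obstacle will be the bookkeeping in the change-of-variables step: one must check that $\theta'$ is $\mathcal{H}^2_g$-measurable and finite a.e., which requires that, outside an $\mathcal{H}^2_g$-null subset of $A$, each point has only finitely many preimages. This uses the discreteness of the critical set together with a countable exhaustion argument to control the self-intersection locus of $v$ on compact pieces of $W$ — routine for $J$-holomorphic maps, but subtler here because $C$ is permitted to have infinite genus and infinitely many punctures, so the countable bookkeeping must be done globally rather than on a single compact domain.
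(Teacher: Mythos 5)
Your argument is correct and is the detailed unpacking of the paper's one-line proof, which simply declares the statement evident from Definitions \ref{defn:integerRectifiable} and \ref{defn:jHolomorphicCurrents}; the rectifiable set, multiplicity, and orientation $2$-vector you construct are exactly what those definitions call for, and the points you flag (discreteness of the critical set, a.e.\ finiteness of the summed multiplicity, the change-of-variables bookkeeping) are the right ones. One minor caveat: Lemma \ref{lem:jHolomorphicSemicalibration} is stated for \emph{closed} integer rectifiable currents while your $T$ may have nonzero boundary when $\partial C \neq \emptyset$, but this is cosmetic, since $J$-holomorphicity follows directly from the $J$-invariance of the approximate tangent planes via the second paragraph of Definition \ref{defn:jHolomorphicCurrents}, or from the one-line computation $T(\alpha(J-,J-)) = \int_C \theta\, \alpha(dv\circ j\,-,\, dv \circ j\,-) = \int_C \theta\, v^*\alpha = T(\alpha)$ using the Cauchy--Riemann equation and the fact that $j$ preserves orientation on $C$.
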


\begin{proof}
This is evident from the definition of $T$ and Definitions \ref{defn:integerRectifiable} and \ref{defn:jHolomorphicCurrents}. 
\end{proof}

\begin{lem} \label{lem:jHolomorphicCurrentCompactness}
Let $(W, J,g)$ be an almost-Hermitian manifold. Let $T_k$ be a sequence of closed, integer rectifiable $J$-holomorphic $2$-dimensional currents, converging weakly to a closed, integer rectifiable current $T$. Then $T$ is $J$-holomorphic as well.
\end{lem}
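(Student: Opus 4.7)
The plan is to work directly from the characterization of $J$-holomorphicity given in Definition \ref{defn:jHolomorphicCurrents}, namely that a $2$-dimensional current $S$ on $W$ is $J$-holomorphic precisely when
\[
S(\alpha) = S(\alpha(J-, J-))
\]
for every compactly supported smooth $2$-form $\alpha$ on $W$. This is manifestly a linear condition that is preserved under weak limits provided that both sides make sense as weak limits, which is the key observation to exploit.

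First I would fix an arbitrary $\alpha \in \Omega^2_c(W)$ and form the $2$-form $\alpha^J$ defined pointwise by $\alpha^J(V_1, V_2) = \alpha(JV_1, JV_2)$. Since $J$ is a smooth almost-complex structure on $W$ and $\alpha$ is smooth and compactly supported, $\alpha^J$ is also smooth and compactly supported, with support contained in the support of $\alpha$. Hence both $\alpha$ and $\alpha^J$ are legitimate test forms on which each $T_k$ and the limit current $T$ are defined.

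Next, since each $T_k$ is $J$-holomorphic by assumption, applying the defining identity gives $T_k(\alpha) = T_k(\alpha^J)$ for every $k$. By the hypothesis that $T_k \to T$ weakly in the sense of currents, we have
\[
\lim_{k \to \infty} T_k(\alpha) = T(\alpha) \qquad \text{and} \qquad \lim_{k \to \infty} T_k(\alpha^J) = T(\alpha^J).
\]
Passing to the limit in the equality $T_k(\alpha) = T_k(\alpha^J)$ therefore yields $T(\alpha) = T(\alpha^J)$. Since $\alpha \in \Omega^2_c(W)$ was arbitrary, Definition \ref{defn:jHolomorphicCurrents} is satisfied and $T$ is $J$-holomorphic, as required.

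There is no real obstacle here; the argument is essentially a one-line reduction once one recognizes that the $J$-holomorphic condition is expressed as the equality of two linear functionals evaluated against smooth compactly supported test forms, and that both functionals are stable under weak convergence. Note in particular that we do not need to invoke the integer-rectifiable structure of $T$ or the semicalibration viewpoint of Lemma \ref{lem:jHolomorphicSemicalibration} — the proof works at the level of arbitrary currents, and the hypotheses on closedness and integer rectifiability of the limit (which are separately guaranteed, e.g. by the Federer--Fleming compactness theorem in applications) are not used in the argument itself.
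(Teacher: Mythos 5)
Your argument is correct and is essentially identical to the paper's proof: both fix a test form $\alpha$, use the defining identity $T_k(\alpha) = T_k(\alpha(J-,J-))$ for each $k$, and pass to the limit using weak convergence applied to the two test forms $\alpha$ and $\alpha(J-,J-)$. Your closing remark that integer rectifiability is not actually used matches the paper, which likewise works purely at the level of the linear identity.
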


\begin{proof}
This is evident from the definition of weak convergence. For any compactly supported $2$-form $\alpha$, we have 
\begin{align*}
    T(\alpha(J-, J-)) &= \lim_{k \to \infty} T_k(\alpha(J-, J-)) \\
    &= \lim_{k \to \infty} T_k(\alpha) \\
    &= T(\alpha).
\end{align*}

Therefore, $T$ is $J$-holomorphic. 
\end{proof}

\begin{lem} \label{lem:jHolomorphicCurrentRegularity}
Let $(W, J,g)$ be an almost-Hermitian manifold. Let $T$ be a closed, integer rectifiable $J$-holomorphic $2$-dimensional current such that $\mathbf{M}_{g,U}(T) < \infty$ for any proper open subset $U$ of $W$ with compact closure. Then there exists a Riemann surface $(\Sigma, j)$ without boundary, a locally constant function $\theta$ on $\Sigma$ valued in the positive integers, and a $J$-holomorphic curve
$$\mathbf{v} = (v, \Sigma, j, W, J, \emptyset, \emptyset)$$
such that for any compactly supported $2$-form $\alpha$ on $W$,
$$T(\alpha) = \int_C \theta \cdot v^*\alpha.$$
\end{lem}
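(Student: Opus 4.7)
The plan is to deduce the lemma directly from the semicalibrated regularity theorem (Theorem \ref{thm:semicalibratedRegularity}), upgrading the local $k$-branching models from merely continuous parametrizations to honest $J$-holomorphic parametrizations by using the $J$-invariance of the approximate tangent planes. I would proceed in three stages: first analyze the regular part, then desingularize at the discrete set of singular points, and finally glue everything together into a global Riemann surface.

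First, by Lemma \ref{lem:jHolomorphicSemicalibration}, the current $T$ is semicalibrated by $\sigma(\cdot,\cdot)=g(J\cdot,\cdot)$, so Theorem \ref{thm:semicalibratedRegularity} applies: the singular set $\mathrm{Sing}(T)$ is discrete in $\mathrm{supp}(T)\setminus\mathrm{supp}(\partial T)=\mathrm{supp}(T)$, and near every point of $\mathrm{supp}(T)$ the current decomposes as a finite sum $\sum_i m_i G_{f_i,\phi_i}$ of $k_i$-branchings with positive integer weights $m_i$. On the regular set $\mathrm{Reg}(T)$ the support is therefore a countable union of embedded $C^1$ submanifolds $\Sigma_\ell$ with locally constant positive integer multiplicity. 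By Definition \ref{defn:jHolomorphicCurrents}, the $J$-holomorphicity of $T$ forces each approximate tangent plane, and hence each $T_x\Sigma_\ell$, to be $J$-invariant. Pulling back $J$ via the inclusion yields an almost-complex structure $j_\ell$ on $\Sigma_\ell$ (which is automatically integrable in real dimension two), so each $(\Sigma_\ell,j_\ell)$ is a Riemann surface and the inclusion $\Sigma_\ell\hookrightarrow W$ is a $J$-holomorphic immersion. Elliptic regularity for the Cauchy--Riemann equation upgrades the $C^1$ regularity to $C^\infty$.

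Second, for each singular point $x\in\mathrm{Sing}(T)$, the local model $T|_U=\sum_i m_i G_{f_i,\phi_i}$ provides candidate desingularizations $\underline{f}_i\colon \widetilde{D}^{k_i}\to W$. The domain $\widetilde{D}^{k_i}$ is biholomorphic to a disk via the $w$-coordinate, hence carries a natural complex structure. I claim that each $\underline{f}_i$ is $J$-holomorphic away from $0$: indeed, on $\widetilde{D}^{k_i}\setminus\{0\}$ the current $G_{f_i,\phi_i}$ is the integration current over the embedded $C^{3,\beta}$ submanifold $\underline{f}_i(\widetilde{D}^{k_i}\setminus\{0\})$, and the $J$-holomorphicity of $T$ together with the fact that distinct branches $\{\underline{f}_i\}$ can be separated (by the local description) implies that each branch has $J$-invariant tangent planes. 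Thus $d\underline{f}_i\circ j = J\circ d\underline{f}_i$ on $\widetilde{D}^{k_i}\setminus\{0\}$. Since $\underline{f}_i$ is continuous at $0$ with finite local energy (controlled by the mass bound and Lemma \ref{lem:semiCalibratedMass}), the removal of singularities theorem for $J$-holomorphic maps extends $\underline{f}_i$ to a smooth $J$-holomorphic map on all of $\widetilde{D}^{k_i}$.

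Third, I glue. The disjoint union of the Riemann surfaces $\{\Sigma_\ell\}$ (from the regular part, with multiplicities given by the local weights) and one copy of each $\widetilde{D}^{k_i}$ for each $k_i$-branching at each singular point, weighted by $m_i$, fits together into a global Riemann surface $\Sigma$ with a continuous $J$-holomorphic map $v\colon\Sigma\to W$: the identification along overlaps is determined by the fact that, away from the discrete singular set, the $k$-branching $G_{f_i,\phi_i}$ already agrees with integration over a smooth $J$-holomorphic submanifold, which must coincide with one of the $\Sigma_\ell$ up to reparametrization. The locally constant function $\theta$ is defined to equal the multiplicity $m_i$ on the component arising from the $i$-th local branch. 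By construction,
\[
T(\alpha) \;=\; \sum_i m_i \int_{\widetilde{D}^{k_i}} \underline{f}_i^*\phi_i^*\alpha \;=\; \int_\Sigma \theta\cdot v^*\alpha
\]
for every compactly supported $2$-form $\alpha$.

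The main obstacle is verifying that the $k$-branching maps $\underline{f}_i$ supplied by Theorem \ref{thm:semicalibratedRegularity}, which a priori are only $C^{3,\beta}$ with a gradient estimate, are genuinely $J$-holomorphic and extend smoothly across the central branch point. This requires carefully combining the separation of branches in the local decomposition with the $J$-invariance of approximate tangent planes, and then invoking removal of singularities to handle the branch point itself. The gluing of local desingularizations into a global Riemann surface is then a book-keeping exercise, since away from the discrete singular set the support is already a smooth $J$-holomorphic submanifold with multiplicity.
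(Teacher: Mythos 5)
Your proposal is correct and follows essentially the same route as the paper: semicalibration via Lemma \ref{lem:jHolomorphicSemicalibration}, the De Lellis--Spadaro--Spolaor regularity theorem, smoothness of the regular part from $J$-invariance of tangent planes plus elliptic regularity, and removal of singularities (using the finite local mass) at the discrete singular set; the paper compactifies $\mathrm{Reg}(T)$ abstractly by adding punctures rather than reusing the $k$-branching parametrizations, but this is the same desingularization in different clothing. The only imprecision is your claim that $\underline{f}_i$ is $J$-holomorphic for the standard $w$-coordinate structure on $\widetilde{D}^{k_i}$ --- one should pull back $J$ to get the domain complex structure, as you do for the regular part, and note (as the paper does via Ahlfors--Bers) that upgrading the a priori merely continuous pulled-back structure to a smooth one requires a small additional argument.
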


\begin{proof}
Recall that by Lemma \ref{lem:jHolomorphicSemicalibration}, $T$ is semicalibrated by the two-form
$$\sigma(-, -) = g(J-, -).$$

Then the lemma is a consequence of the regularity theorem for semicalibrated currents in Theorem \ref{thm:semicalibratedRegularity}. 

A proof of the lemma, with the additional assumption that $T$ has compact support, is written down explicitly in \cite[Lemma $5.5$]{DoanWalpuski21}. 

Their proof adapts without difficulty to our setting. The key observation is that the regularity theorems of \cite{DSS17a} and \cite{DSS17b} are local, so they can be applied without assuming $T$ has compact support as long as we assume that $\mathbf{M}_{g, U}(T)$ is finite for any proper open set of $W$ with compact closure. The assumption of compact support is necessary, however, to show that the $J$-holomorphic curve $\mathbf{v}$ in the statement of the theorem has \emph{closed} domain. Such a statement is clearly false in our desired application, the proof of Proposition \ref{prop:periodicOrbits}, and not required.  

First, let $\dot\Sigma = \text{Reg}(T)$ be the set of regular points of $T$, which by definition is an embedded $C^1$ submanifold of $W$. Since $T$ is semicalibrated by $\sigma$, it follows that $\dot\Sigma$ has $J$-invariant tangent planes. 

Elliptic regularity shows that $\dot\Sigma$ is a smooth submanifold of $W$. This is somewhat subtle, since a priori $\dot\Sigma$ is the image of a pseudoholomorphic embedding of a Riemann surface with a \emph{continuous} almost-complex structure. The ``measurable Riemann mapping theorem'' of Ahlfors-Bers \cite{AhlforsBers60} can be used on the domain to show that, at least locally, the pseudoholomorphic map takes the form of an embedding
$$u: (D, i) \to (W, J)$$
satisfying the equation
$$\partial_s u + J(u)\partial_t u$$
where $D$ is the unit disk in the complex plane with real coordinates $(s,t)$ and $J(u)$ is continuous. From here it is clear from standard theory that the map is in fact smooth. 

The regularity theorem written down in Theorem \ref{thm:semicalibratedRegularity} states that $\text{Sing}(T)$ is discrete.

Moreover, the finer description of $T$ given by the first item in  Theorem \ref{thm:semicalibratedRegularity} shows each $x \in \text{Sing}(T)$ has a neighborhood $V_x$ with the following properties. First, the intersection of $\dot\Sigma \cap V_x$ is diffeomorphic to a finite disjoint union 
$$\dot\Sigma \cap V_x \simeq D \setminus \{0\} \sqcup \ldots \sqcup D\setminus\{0\}$$
where $D$ denotes the open unit disk in the complex plane. Second, 
$$\text{Sing}(T) \cap V_x = \{x\}.$$

Since $W$ is locally compact, we may also assume that $V_x$ has compact closure in $W$. 

It follows that we can compactify $\dot\Sigma$ in each such neighborhood $V_x$ by adding finitely many points. This yields a Riemann surface $(\Sigma, j)$, a holomorphic embedding of Riemann surfaces $\dot\Sigma \hookrightarrow \Sigma$ and a continuous map $v: \Sigma \to W$ extending the smooth $J$-holomorphic embedding $\dot\Sigma \hookrightarrow W$. 

For any $z \in \Sigma \setminus \dot\Sigma$, recall that the neighborhood $V_{v(z)}$ has compact closure. By our assumptions we have $\mathbf{M}_{g,V_x}(T) < \infty$. Now recall that, by definition, $v^{-1}(V_{v(z)} \setminus \{v(z)\})$ is a finite union of punctured disks of the form $D\setminus\{0\}$. Equip these disks with the Euclidean metric. Then the mass bound implies immediately that the map $v$ has finite energy on $v^{-1}(V_{v(z)} \setminus \{v(z)\})$:
$$\int_{v^{-1}(V_{v(z)}\setminus \{v(z)\})} |Dv|^2 < \infty.$$

By the removable singularity theorem \cite[Theorem $4.2.1$]{McduffSalamon12}, $v$ extends to a smooth, $J$-holomorphic map on each connected component of $v^{-1}(V_{v(z)} \setminus \{v(z)\})$. It follows that the map $v: \Sigma \to W$ is smooth and $J$-holomorphic, and so the tuple
$$\mathbf{v} = (v, \Sigma, j, W, J, \emptyset, \emptyset)$$
forms the datum of a $J$-holomorphic curve. 

Recall that there is a locally constant function $\theta$ on $\dot\Sigma$ valued in the positive integers such that for any two-form $\alpha$ with pullback $v^*\alpha$ compactly supported in $\dot\Sigma$, 
$$T(\alpha) = \int_{\dot\Sigma} \theta \cdot v^*\alpha.$$

The function $\theta$ extends uniquely to a locally constant function, also denoted by $\theta$, on $\Sigma$ valued in the positive integers. Since the map $v: \Sigma \to W$ is smooth and $J$-holomorphic, it follows that
$$T(\alpha) = \int_{\Sigma} \theta \cdot v^*\alpha$$
for \emph{any} compactly supported two-form in $W$. 
\end{proof}

\begin{rem} Let $(W, J,g)$ be an almost-Hermitian manifold. Suppose $(W, J, g)$ is \emph{locally symplectic}, i.e. for any point $x \in W$, there is a neighborhood $U$ of $x$ and a symplectic form $\Omega$ on $U$ such that $J$ is compatible with $\Omega$. Then the results of \cite{DSS17a} and \cite{DSS17b} for $2$-dimensional semicalibrated currents are not necessary, and we can appeal instead to the regularity theorems for $2$-dimensional $J$-holomorphic currents proved in \cite{RiviereTian09}. The authors of \cite{RiviereTian09} note furthermore that if $W$ is four-dimensional, then $(W,g,J)$ is locally symplectic. 
\end{rem}

Before we prove Proposition \ref{prop:periodicOrbits}, we require two more geometric propositions regarding pseudo-holomorphic curves in almost-Hermitian manifolds. 

Recall that we have fixed at the beginning of this section a framed Hamiltonian manifold $(M, \eta = (\lambda, \omega))$ and an $\eta$-adapted cylinder $(\mathbb{R} \times M, J, g)$. 

The first is an area bound for pseudoholomorphic curves in the adapted cylinder depending on the ``height'' of the curve and the $\lambda$-energy of the curve at a single level $t \in \mathbb{R}$.

This theorem was proved in \cite{FishHoferFeral} in greater generality. We discuss this theorem in detail in \cite{sequel}. 

\begin{thm}
\label{thm:exponentialAreaBoundsBasic} \cite[Theorem $3$]{FishHoferFeral} Fix some $a_0 \in \mathbb{R}$,  positive constants $\epsilon$ and $\Lambda$, and a $\omega$-finite curve
$$\mathbf{u} = (u, C, j, \mathbb{R} \times M, J, D, \mu)$$
such that
\begin{itemize}
    \item $a_0$ is a regular level of $a \circ u$,
    \item $E_\lambda(u, a_0) \leq \Lambda$,
    \item $(a \circ u)(C) = [a_0 - \epsilon, a_0 + \epsilon]$ and $(a \circ u)(\partial C) = \{a_0 \pm \epsilon\}$. 
\end{itemize}

Then there is a constant $\kappa = \kappa(\epsilon, \Lambda, M, \eta, g, J) > 0$ such that
$$\text{Area}_{u^*g}(C) \leq \kappa.$$
\end{thm}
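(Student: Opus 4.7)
The plan is to combine the decomposition of the almost-Hermitian area form with Stokes' theorem and Gronwall's inequality. Because $J$ is $\eta$-adapted, the $2$-form $\sigma = da \wedge \lambda + \omega$ is the Kähler form of the almost-Hermitian structure $(g, J)$, and so for any $J$-holomorphic curve $u$ one has $u^*\sigma = d\mathrm{vol}_{u^*g}$. This gives the identity
$$\text{Area}_{u^*g}(C) = \int_C u^*(da \wedge \lambda) + \int_C u^*\omega.$$
The second summand equals $E_\omega(u)$, which is finite by the $\omega$-finite hypothesis. The first summand, by the coarea formula and the fact that $(a \circ u)(C) = [a_0 - \epsilon, a_0 + \epsilon]$, is $\int_{a_0 - \epsilon}^{a_0 + \epsilon} E_\lambda(u, t)\,dt$. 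So everything reduces to an $L^\infty$ bound on the function $t \mapsto E_\lambda(u, t)$ on this interval.

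The core analytic step is a differential inequality for $E_\lambda(u,t)$. For any pair of regular values $s < t$ of $a \circ u$ lying in $[a_0 - \epsilon, a_0 + \epsilon]$, applying Stokes' theorem to the compact sub-surface $C_{[s,t]} := (a \circ u)^{-1}([s,t])$ (whose oriented boundary consists of the two regular level curves at heights $s$ and $t$) gives
$$E_\lambda(u, t) - E_\lambda(u, s) = \int_{C_{[s,t]}} u^*d\lambda.$$
Since $M$ is compact there is a constant $C_0 = C_0(M, \eta, g, J)$ with $|d\lambda|_g \leq C_0$, and the standard comparison inequality for pullbacks of $2$-forms by $J$-holomorphic maps (using that the image of $du$ is $J$-invariant on which $\sigma$ is a positive area form) yields the pointwise estimate $|u^*d\lambda| \leq C_0\, u^*\sigma$ on $C$. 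Using again the decomposition of $u^*\sigma$, one obtains
$$E_\lambda(u, t) \leq E_\lambda(u, s) + C_0 \int_s^t E_\lambda(u, r)\,dr + C_0\, E_\omega(u).$$
Taking $s = a_0$ and applying Gronwall's inequality on each of $[a_0, a_0+\epsilon]$ and $[a_0-\epsilon, a_0]$ (with time reversed) produces
$$\sup_{t \in [a_0-\epsilon, a_0+\epsilon]} E_\lambda(u, t) \leq (\Lambda + C_0 E_\omega(u))\, e^{C_0 \epsilon},$$
and integrating together with the $E_\omega(u)$ term completes the area bound.

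The hard part is eliminating the dependence of the final constant on $E_\omega(u)$, so that $\kappa$ depends only on $(\epsilon, \Lambda, M, \eta, g, J)$ as asserted. A priori $E_\omega(u)$ is only known to be finite, not quantitatively controlled. The resolution, carried out in \cite{FishHoferFeral}, is a self-improving argument: one uses the boundary hypothesis $(a \circ u)(\partial C) = \{a_0 \pm \epsilon\}$ together with the monotonicity inequality for $J$-holomorphic curves and a careful iteration of the Stokes/Gronwall scheme above to bound $E_\omega$ on the slab in terms of $\Lambda$, $\epsilon$, and the ambient geometry. A secondary (but milder) technical point is that $C$ may have infinite genus, infinitely many ends, and infinitely many nodal and marked points, so Stokes' theorem must be applied through a compact exhaustion of $C_{[s,t]}$, using the fact that nodal and marked points are isolated and contribute nothing to the integrals of the smooth forms $u^*\lambda$ and $u^*d\lambda$.
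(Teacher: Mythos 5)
Your first three steps are correct, and they are in fact the mechanism behind the cited result (the paper itself offers no proof of this theorem --- it is quoted from \cite{FishHoferFeral} with the proof deferred to \cite{sequel}, so there is nothing internal to compare against). The identity $u^*(da\wedge\lambda+\omega)=\mathrm{dvol}_{u^*g}$, the coarea reduction to $\int_{a_0-\epsilon}^{a_0+\epsilon}E_\lambda(u,t)\,dt$, the Stokes identity $E_\lambda(u,t)-E_\lambda(u,s)=\int_{C_{[s,t]}}u^*d\lambda$, the pointwise comparison $|u^*d\lambda|\le C_0\,u^*(da\wedge\lambda+\omega)$, and the Gronwall step are all sound, and they deliver $\mathrm{Area}_{u^*g}(C)\le 2\epsilon e^{C_0\epsilon}(\Lambda+C_0E_\omega(u))+E_\omega(u)$. (Two small caveats: Gronwall needs $E_\lambda(u,\cdot)\in L^1_{\mathrm{loc}}$, which holds because $C$ is compact; and note that properness plus $(a\circ u)(C)=[a_0-\epsilon,a_0+\epsilon]$ \emph{forces} $C$ compact, so the hypothesis that $\mathbf{u}$ be $\omega$-finite --- which requires a non-compact component --- is literally unsatisfiable, and the intended object is the compact restriction of an $\omega$-finite curve to a slab, as in the proof of Proposition \ref{prop:periodicOrbits}.)

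The genuine gap is your last step. You assert, without carrying it out, that a ``self-improving argument'' from \cite{FishHoferFeral} bounds $E_\omega(u)$ on the slab by $(\epsilon,\Lambda)$ and the ambient geometry. No such argument can exist, because the statement with $\kappa=\kappa(\epsilon,\Lambda,M,\eta,g,J)$ is false. Take $M=S^1\times S^2$ with $\lambda=d\theta$, $\omega=\omega_{S^2}$, the product adapted $J$ on $\mathbb{R}\times M\cong\mathbb{C}^*\times S^2$, and the curves
$$u_d:\{e^{-\epsilon}\le|z|\le e^{\epsilon}\}\to\mathbb{C}^*\times S^2,\qquad u_d(z)=(z,z^d).$$
Each $u_d$ is $J$-holomorphic, satisfies $(a\circ u_d)(C)=[-\epsilon,\epsilon]$ and $(a\circ u_d)(\partial C)=\{\pm\epsilon\}$, and has $E_\lambda(u_d,0)=2\pi$ for every $d$, yet $\mathrm{Area}_{u_d^*g}(C)\ge\int_C u_d^*\omega_{S^2}\sim 4\pi d\to\infty$. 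So the constant must depend on $E_\omega(u)$ as well; this is how the estimate is actually stated in \cite{FishHoferFeral}, and the transcription in the present paper has dropped that dependence. The loss is harmless for the only use made of the theorem here (Proposition \ref{prop:periodicOrbits} applies it to slabs of a single fixed $\omega$-finite curve, for which $E_\omega(u)$ is one fixed finite number), and your honest computation already proves exactly that corrected statement. In short: keep your first three steps, delete the fourth, and restate the conclusion with $\kappa=\kappa(\epsilon,\Lambda,E_\omega(u),M,\eta,g,J)$.
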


The second required result is essentially the standard monotonicity formula for pseudoholomorphic curves in symplectic manifolds, but for the more general case of pseudoholomorphic curves in almost-Hermitian manifolds. 

\begin{prop} \label{prop:monotonicity}
\cite[Corollary $3.7$]{FishCurves} 

For any $\epsilon > 0$, there exists $r_0 = r_0(\epsilon, M, g, J) > 0$ such that the following holds for any $r \in (0, r_0)$. Let
$$\mathbf{u} = (u, C, j, \mathbb{R} \times M, J, D, \mu)$$
be a pseudoholomorphic curve such that $C$ is compact and the restriction of $u$ to any component of $C$ is not constant. Then for any $z \in C$ such that $u^{-1}(B_r(u(z)))$ does not intersect the boundary $\partial C$ of the domain, we find
$$\text{Area}_{u^*g}(u^{-1}(B_r(u(z)))) \geq (1 + \epsilon)^{-1}\pi r^2.$$
\end{prop}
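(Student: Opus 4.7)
The plan is to run the classical monotonicity argument for $J$-holomorphic curves, adapted to the almost-Hermitian setting where the two-form $\sigma(-,-) = g(J-,-)$ need not be closed. Fix a point $z \in C$ with $p = u(z) \in \mathbb{R} \times M$ satisfying the hypothesis, let $\rho$ denote the distance to $p$ in $(\mathbb{R} \times M, g)$, and set $A(r) = \text{Area}_{u^*g}(u^{-1}(B_r(p)))$. By Lemma \ref{lem:jHolomorphicSemicalibration}, $u^*\sigma$ agrees with the $u^*g$-area form on the smooth locus of $u$, so $A(r) = \int_{u^{-1}(B_r(p))} u^*\sigma$.

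First I would fix a chart around $p$ in which $J(p) = J_0$ and $g(p)$ is Euclidean. In such a chart, $\sigma$ differs from the standard symplectic form $\sigma_0$ by $O(\rho)$, and by the Poincar\'e lemma on a small ball one obtains a smooth one-form $\alpha$ with $d\alpha = \sigma$ and $|\alpha|_g(x) \leq C_0 \rho(x)$, where $C_0$ depends only on $(M,\eta,g,J)$. Then, for almost every regular value $r$ of $\rho \circ u$ with $r$ below the injectivity radius, Stokes' theorem and the hypothesis that $u^{-1}(B_r(p))$ avoids $\partial C$ give
$$A(r) = \int_{u^{-1}(\partial B_r(p))} u^*\alpha \leq C_0 \, r \, L(r),$$
where $L(r)$ is the $u^*g$-length of $u^{-1}(\partial B_r(p))$.

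Second, the coarea formula applied to $\rho \circ u$ together with the $J$-holomorphic Wirtinger identity (which gives $|\partial_r u|_g^2 = r^{-2}|\partial_\theta u|_g^2$ at smooth points of $u$ in polar coordinates adapted to $\rho$) upgrades the naive bound $A'(r) \geq L(r)$ to the differential inequality
$$\frac{d}{dr}\!\left(\frac{A(r)}{r^2}\right) \geq -\frac{2C_0\, A(r)}{r^2}$$
for a.e.\ $r \in (0, r_0')$, with $r_0'$ depending only on the data. Integrating from $0$ and comparing to the local density $\Theta(p, u) \in \mathbb{Z}_{\geq 1}$ (which exists and is a positive integer at every point of the image of a non-constant $J$-holomorphic curve, by the standard local structure theorem) yields
$$A(r) \geq \pi r^2 \cdot \Theta(p,u) \cdot e^{-2 C_0 r} \geq \pi r^2 e^{-2 C_0 r}.$$
Choosing $r_0 \in (0, r_0')$ small enough that $e^{-2C_0 r_0} \geq (1+\epsilon)^{-1}$ yields the claimed bound.

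The main obstacle is obtaining the sharp differential inequality and the correct constant $\pi$. The Wirtinger identity is only pointwise on the smooth locus, and $u$ may fail to be immersive on the (at most isolated) preimages of $p$; to pass to the global ODE one must either work on the complement of the branch points and verify that the boundary terms vanish in the limit, or invoke the local normal form for $J$-holomorphic maps at critical points (so that the density $\Theta(p,u)$ absorbs the multiplicity). Handling the almost-Hermitian error $d\sigma \neq 0$ is comparatively routine once the local primitive $\alpha$ with $|\alpha| = O(\rho)$ has been constructed, but keeping the exponential error factor truly controlled by the universal constant $C_0$ requires the chart choice to be uniform in $p$, which is why compactness of $M$ enters through the dependence of $r_0$ on $(M, g, J)$.
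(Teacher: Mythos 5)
The paper does not actually prove this proposition --- it is quoted verbatim from Fish's work on estimates for $J$-curves as submanifolds of almost-Hermitian manifolds, so there is no internal proof to compare against. Your outline is the standard monotonicity argument and is essentially the right strategy, but as written it contains one step that is genuinely false: you assert that the Poincar\'e lemma produces a one-form $\alpha$ with $d\alpha = \sigma$, where $\sigma(-,-) = g(J-,-)$. In the adapted cylinder one has $\sigma = da \wedge \lambda + \omega$, hence $d\sigma = -\,da \wedge d\lambda$, which is nonzero in general; a non-closed two-form admits no primitive whatsoever, so this step cannot be carried out, and it sits in tension with your own later remark that ``$d\sigma \neq 0$'' must be handled. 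The repair is standard but must actually appear in the proof: in a chart centered at $p$ take the primitive $\alpha_0 = \tfrac{1}{2}\sum (x_i\,dy_i - y_i\,dx_i)$ of the \emph{frozen-coefficient} form $\sigma_p$ (which satisfies $|\alpha_0| \leq (\tfrac12 + O(\rho))\rho$), apply Stokes to $\sigma_p$ only, and estimate the discrepancy $\int_{u^{-1}(B_r(p))} u^*(\sigma - \sigma_p)$ by $O(r)\,A(r)$ using $|\sigma - \sigma_p| = O(\rho)$ and the fact that $u^*\sigma$ is the $u^*g$-area form. That error term is exactly of the type your exponential factor $e^{-2C_0 r}$ is designed to absorb, so the differential inequality and the conclusion survive, but the constant $C_0$ must be traced through this correction (and through the requirement $C_0 = \tfrac12 + O(r)$ in the isoperimetric step) for the sharp constant $\pi$ to emerge.

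Two smaller points. First, the inequality $A'(r) \geq L(r)$ from the coarea formula uses $|\nabla(\rho \circ u)|_{u^*g} \leq 1$, which holds at immersion points because $\rho$ is $1$-Lipschitz; you should say explicitly that the critical points of $u$ are isolated (by the local structure theory for $J$-holomorphic maps) so that this holds almost everywhere and the boundary terms you worry about do not contribute. Second, Fish's own route to this corollary is different in flavor: he bounds the mean curvature of the image of a $J$-curve in an almost-Hermitian manifold in terms of $\|\nabla J\|_g$ and then invokes monotonicity for surfaces of bounded mean curvature, which avoids choosing a local primitive altogether; your calibration-plus-Stokes route is equally valid once the non-closedness of $\sigma$ is handled as above.
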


\subsection{The proof of Proposition \ref{prop:periodicOrbits}} We are now equipped with all of the necessary tools to prove  Proposition \ref{prop:periodicOrbits}.

Pick any unbounded sequence of regular values $\{t_k\}$ of $a \circ u$ such that $u(C)$ intersects $\{t_k\} \times M$ for every $k$, and $E_\lambda(u, t_k)$ is uniformly bounded. Without loss of generality, after passing to a subsequence we may assume that $t_k$ either increases monotonically to $\infty$ or decreases monotonically to $-\infty$. We will prove Proposition \ref{prop:periodicOrbits} assuming the former holds, as the proof in the latter case is identical. 

It follows that we can extract a subsequence such that $E_\lambda(u, t_k)$ converges. 

Fix any $\epsilon > 0$ such that for any $k$, both $t_k + \epsilon$ and $t_k - \epsilon$ are regular values of $a \circ u$.

Then it follows from the exponential area estimate in Theorem \ref{thm:exponentialAreaBoundsBasic} that the restrictions of $u$ to the compact sub-surfaces
$$C_k = (a \circ u)^{-1}([t_k - \epsilon, t_k + \epsilon])$$
have uniformly bounded area in $k$. 

The fact that the map $u$ is proper, $u(C)$ has finitely many connected components, and the domain $C$ has at least one non-compact connected component as well as finitely many boundary components implies the following statements are true for any sufficiently large $k$. 

First, the image under $u$ of the boundaries $\partial C_k$ lie in $\{t_k \pm \epsilon\} \times M$. Second, $C_k$ has at least one connected component $C_k^0 \subset C_k$ such that $u(C_k^0)$ crosses the entire cylinder $[t_k - \epsilon, t_k + \epsilon] \times M$, that is $(a \circ u)(C_k^0) = [t_k - \epsilon, t_k + \epsilon]$. 

We assume $k$ is sufficiently large so that these statements hold for any $k$. 

Define the $J$-holomorphic map $u_k$ to be the composition of the restriction of $u$ to $C_k$ and a shift by $-t_k$ in the $\mathbb{R}$-coordinate.

This yields a sequence 
$$\mathbf{u}_k = (u_k, C_k, j_k, [-\epsilon, \epsilon] \times M, J, \emptyset, \emptyset)$$
of $J$-holomorphic curves mapping into $[-\epsilon, \epsilon] \times M$ with uniformly bounded area. 

Write $T_k$ for the two-dimensional, integral rectifiable $J$-holomorphic current on the non-compact cylinder $(-\epsilon, \epsilon) \times M$ given by
$$T_k(\alpha) = \int_{C_k} u_k^*\alpha$$
for any smooth, compactly supported two-form $\alpha$ on $(-\epsilon, \epsilon) \times M$. 

\begin{rem}
We restrict our currents to the non-compact manifold $(-\epsilon, \epsilon) \times M$ for a couple of different reasons. 

The first reason is that we wish to apply the Federer-Fleming compactness theorem to the $T_k$ to extract a limiting $J$-holomorphic current $T$ from a subsequence. The Federer-Fleming compactness theorem, as stated in Theorem \ref{thm:federerFlemingCompactness} requires the ambient manifold to not have boundary. Furthermore, observe that the boundaries of the currents $T_k$ are supported in $\{\pm \epsilon\} \times M$. Therefore, their restrictions to $(-\epsilon, \epsilon) \times M$ have no boundary, as rigorously proved in Lemma \ref{lem:currentsAreAllClosed} below. This means that is not necessary to control the local masses of the boundaries $\partial T_k$ to apply the Federer-Fleming compactness theorem, only the local masses of the currents $T_k$ themselves.

The second reason is that, after application of the Federer-Fleming compactness theorem, we want to apply the regularity theory of \cite{DSS17a, DSS17b} to show that the limiting current $T$ is in fact given by integration over a $J$-holomorphic curve. The regularity theorem is an \emph{interior regularity} theorem, meaning it only holds away from the support of the boundary $\partial T$. Therefore, it is simplest to restrict the currents $T_k$ to $(-\epsilon, \epsilon) \times M$ so that they have no boundary and the limit current $T$ has no boundary, so the regularity theorem holds everywhere. 

We do note that a $J$-holomorphic curve representing $T$ could have singularities accumulating at the boundary of $[-\epsilon, \epsilon] \times M$. This is not an issue in our arguments. 
\end{rem}

The following two lemmas show that the sequence $T_k$ satisfies the assumptions of the Federer-Fleming compactness theorem stated in Theorem \ref{thm:federerFlemingCompactness}. 

\begin{lem} \label{lem:currentsAreAllClosed}
The currents $T_k$ are closed.
\end{lem}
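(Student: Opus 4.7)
The plan is to apply Stokes' theorem directly, exploiting the fact that we have excised the level sets $\{\pm \epsilon\} \times M$ where $u_k(\partial C_k)$ lives. Concretely, I will pick an arbitrary smooth, compactly supported one-form $\alpha$ on the open cylinder $(-\epsilon, \epsilon) \times M$ and show that
\[
\partial T_k(\alpha) \;=\; T_k(d\alpha) \;=\; \int_{C_k} u_k^* d\alpha \;=\; 0.
\]

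The first step is to commute $u_k^*$ with $d$, which is valid because $u_k$ is smooth, yielding $\int_{C_k} d(u_k^* \alpha)$. Since $C_k$ is a compact surface with boundary, Stokes' theorem reduces this integral to $\int_{\partial C_k} u_k^* \alpha$. The crucial step is then the observation that, by construction, $u_k$ sends $\partial C_k$ into $\{\pm\epsilon\} \times M$; since $\alpha$ is compactly supported in the \emph{open} cylinder $(-\epsilon,\epsilon) \times M$, there is a neighborhood of $\{\pm\epsilon\} \times M$ on which $\alpha$ vanishes identically. In particular $u_k^*\alpha$ vanishes along $\partial C_k$, and the boundary integral is zero.

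There is no real obstacle to overcome: the argument is essentially a one-line application of Stokes' theorem, and the whole point of restricting the currents to the open cylinder $(-\epsilon,\epsilon) \times M$ (rather than the closed one $[-\epsilon,\epsilon] \times M$) was precisely to push the boundary of the parametrized surface outside the set on which test forms can be supported. The proof therefore consists of exactly this chain of equalities followed by the remark about the support of $\alpha$.
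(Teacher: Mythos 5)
Your proof is correct and is essentially identical to the paper's: both apply Stokes' theorem on the compact surface $C_k$ and observe that the boundary integral vanishes because $u_k(\partial C_k) \subset \{\pm\epsilon\}\times M$ while the test form $\alpha$ is compactly supported in the open cylinder $(-\epsilon,\epsilon)\times M$. No further comment is needed.
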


\begin{proof}
By Stokes' theorem, for any $k$ and any compactly supported $1$-form $\alpha$ on $(-\epsilon, \epsilon) \times M$ we have
\begin{align*}
    \partial T_k(\alpha) &= T_k(d\alpha) \\
    &= \int_{C_k} u_k^*d\alpha \\
    &= \int_{C_k} d(u_k^*\alpha) \\
    &= \int_{\partial C_k} u_k^*\alpha \\
    &= 0.
\end{align*}
The last line follows from the fact that $\alpha$ is compactly supported and $u_k(\partial C_k)$ lies in $\{\pm \epsilon\} \times M$. 
\end{proof}

\begin{lem} \label{lem:uniformMassBound}
For any proper open subset $U$ of $(-\epsilon, \epsilon) \times M$ with compact closure, the masses $\mathbf{M}_{g, U}(T_k)$ are uniformly bounded.
\end{lem}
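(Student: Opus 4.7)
The plan is to reduce the mass of $T_k$ to the area of the pullback metric on $C_k$, and then apply the exponential area bound in Theorem~\ref{thm:exponentialAreaBoundsBasic}. First, since each $u_k$ is $J$-holomorphic and $C_k$ carries the positive locally constant weight $1$, Lemma~\ref{lem:jHolomorphicCurvesCurrents} shows that $T_k$ is an integer rectifiable $J$-holomorphic $2$-current on $(-\epsilon,\epsilon)\times M$, and Lemma~\ref{lem:currentsAreAllClosed} shows it is closed. By Lemma~\ref{lem:jHolomorphicSemicalibration}, $T_k$ is then semicalibrated by the $2$-form $\sigma(-,-)=g(J-,-)$.

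Next, fix a proper open subset $U\subset(-\epsilon,\epsilon)\times M$ with compact closure, and choose a smooth, nonnegative, compactly supported function $\chi$ on $(-\epsilon,\epsilon)\times M$ with $\chi\equiv 1$ on $U$ and $0\leq\chi\leq 1$ everywhere. Lemma~\ref{lem:semiCalibratedMass} gives
$$\mathbf{M}_{g,U}(T_k)\ \leq\ T_k(\chi\sigma)\ =\ \int_{C_k}\chi(u_k)\, u_k^{*}\sigma.$$
A direct computation using the Cauchy-Riemann equation $du_k\circ j_k=J\circ du_k$ and compatibility of $J$ with $g$ shows that for any orthonormal frame $(e_1,e_2=j_k e_1)$ with respect to $u_k^{*}g$ one has $u_k^{*}\sigma(e_1,e_2)=|du_k(e_1)|_g^2 = d\mathrm{Area}_{u_k^{*}g}(e_1,e_2)$. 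Hence $u_k^{*}\sigma$ is exactly the area form of $u_k^{*}g$ on $C_k$, and so
$$\mathbf{M}_{g,U}(T_k)\ \leq\ \int_{C_k}\chi(u_k)\, d\mathrm{Area}_{u_k^{*}g}\ \leq\ \mathrm{Area}_{u_k^{*}g}(C_k).$$

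Finally, to bound $\mathrm{Area}_{u_k^{*}g}(C_k)$ uniformly in $k$, recall that $u_k$ was defined as the restriction of $u$ to $C_k=(a\circ u)^{-1}([t_k-\epsilon,t_k+\epsilon])$ followed by the $\mathbb{R}$-translation by $-t_k$. Since $g$ is translation invariant, the translation does not change the area. The sequence $\{t_k\}$ was chosen so that each $t_k$ is a regular value of $a\circ u$ with $E_\lambda(u,t_k)$ uniformly bounded by some $\Lambda<\infty$, and $\epsilon$ was chosen so that $t_k\pm\epsilon$ are regular values, so the restriction of $u$ to $C_k$ verifies the hypotheses of Theorem~\ref{thm:exponentialAreaBoundsBasic} with $a_0=t_k$. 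Hence there is a constant $\kappa=\kappa(\epsilon,\Lambda,M,\eta,g,J)$, independent of $k$, with
$$\mathrm{Area}_{u_k^{*}g}(C_k)\ \leq\ \kappa,$$
and combining with the chain of inequalities above yields $\mathbf{M}_{g,U}(T_k)\leq\kappa$ for all $k$, as desired.

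The main obstacle is the step of identifying $u_k^{*}\sigma$ with the pullback area form; this is an essentially algebraic computation using adaptedness of $J$ and the Cauchy-Riemann equation, but one must be careful since the current $T_k$ is defined intrinsically (not via a fixed conformal structure on $C_k$). Everything else is a direct application of the cited lemmas and of the exponential area estimate, and the bound is uniform in $k$ precisely because $\epsilon$ is fixed and $E_\lambda(u,t_k)$ is uniformly bounded.
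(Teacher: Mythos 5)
Your proposal is correct and follows essentially the same route as the paper: semicalibration of the $J$-holomorphic currents $T_k$ by $g(J-,-) = da\wedge\lambda + \omega$ (Lemma \ref{lem:jHolomorphicSemicalibration}), the mass bound of Lemma \ref{lem:semiCalibratedMass}, the identification of $T_k(\chi\sigma)$ with (at most) $\mathrm{Area}_{u_k^*g}(C_k)$, and the uniform area bound from Theorem \ref{thm:exponentialAreaBoundsBasic}. The only difference is that you spell out the pointwise computation identifying $u_k^*\sigma$ with the pullback area form, which the paper treats as immediate, and you re-derive the uniform area bound inside the proof rather than citing it from the discussion preceding the lemma.
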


\begin{proof}
To prove the lemma, we make two observations. First, since the currents $T_k$ are $J$-holomorphic, by Lemma \ref{lem:jHolomorphicSemicalibration} they are semicalibrated by the two-form
$$g(J-, -) = da \wedge \lambda + \omega.$$

Second, by Lemma \ref{lem:semiCalibratedMass}, for any proper open subset $U$ of $(-\epsilon, \epsilon) \times M$ with compact closure and smooth, nonnegative function $\chi$ on $(-\epsilon, \epsilon) \times M$ that is compactly supported and equal to $1$ on $U$, we have the inequality
$$\mathbf{M}_{g,U}(T_k) \leq T_k(\chi \cdot (da \wedge \lambda + \omega)).$$

However, by definition
$$T_k(\chi \cdot (da \wedge \lambda + \omega)) \leq \text{Area}_{u_k^*g}(C_k).$$

The uniform mass bound follows immediately by combining these two inequalities, along with the fact that $\text{Area}_{u_k^*g}(C_k)$ is uniformly bounded above in $k$. 
\end{proof}

We apply the Federer-Fleming compactness theorem (written as Theorem \ref{thm:federerFlemingCompactness} above) and Lemma \ref{lem:jHolomorphicCurrentCompactness}. The currents $T_k$ converge weakly after passing to a subsequence to a closed, two-dimensional, $J$-holomorphic integer rectifiable current $T$ on $(-\epsilon, \epsilon) \times M$.

We show in the following three lemmas that the support of $T$ is quite restricted.

\begin{lem} \label{lem:nonzeroSupport}
The current $T$ is not zero. 
\end{lem}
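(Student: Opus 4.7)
The plan is to use the monotonicity formula from Proposition \ref{prop:monotonicity} to force each $T_k$ to carry a uniform-in-$k$ amount of mass in a fixed neighborhood of a limit point at level zero, and then to transfer this lower bound to $T$ by pairing against the semicalibration.

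First I would exploit the crossing property of $C_k^0$: since $(a \circ u_k)(C_k^0) = [-\epsilon, \epsilon]$, I can pick $z_k \in C_k^0$ with $(a \circ u_k)(z_k) = 0$ and set $p_k = u_k(z_k) \in \{0\} \times M$. Compactness of $M$ lets me pass to a subsequence with $p_k \to p$ for some $p \in \{0\} \times M$. Because the $\eta$-adapted metric $g$ satisfies $g(\partial_a, \partial_a) = 1$ (directly from Remark \ref{rem:metric}), for every $r < \epsilon$ the $g$-ball $B_r(p_k)$ is disjoint from $\{\pm\epsilon\} \times M \supseteq u_k(\partial C_k^0)$, and so the preimage $(u_k|_{C_k^0})^{-1}(B_r(p_k))$ misses $\partial C_k^0$. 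The restriction $u_k|_{C_k^0}$ is non-constant because its $a$-image is $[-\epsilon, \epsilon]$, so the hypotheses of Proposition \ref{prop:monotonicity} are satisfied at $z_k$ for the compact curve $u_k|_{C_k^0}$. Taking the parameter in that proposition to equal $1$, I therefore obtain a radius $r_0 > 0$ independent of $k$ such that
$$\text{Area}_{u_k^*g}\bigl((u_k|_{C_k^0})^{-1}(B_r(p_k))\bigr) \geq \tfrac{1}{2}\pi r^2$$
for every $r \in (0, \min\{r_0, \epsilon\})$.

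Since $u_k$ is $J$-holomorphic and $\sigma := da \wedge \lambda + \omega$ is the semicalibration from Lemma \ref{lem:jHolomorphicSemicalibration}, the pullback $u_k^*\sigma$ is the area form of $u_k^*g$, so Definition \ref{defn:integerRectifiable} yields $\mathbf{M}_{g, B_r(p_k)}(T_k) \geq \tfrac{1}{2}\pi r^2$. Now fix such an $r$, also requiring $\overline{B_r(p)} \subset (-\epsilon, \epsilon) \times M$, and pick a nonnegative smooth compactly supported function $\chi$ on $(-\epsilon, \epsilon) \times M$ with $\chi \equiv 1$ on $B_r(p)$. For $k$ large enough that $d(p_k, p) < r/2$, one has $B_{r/2}(p_k) \subseteq B_r(p)$, and the semicalibration identity $\langle \sigma, \overrightarrow{T_k}\rangle = 1$ on $\mathrm{supp}(T_k)$ combined with the nonnegativity of $\chi$ gives
$$T_k(\chi\sigma) \geq \mathbf{M}_{g, B_r(p)}(T_k) \geq \mathbf{M}_{g, B_{r/2}(p_k)}(T_k) \geq \tfrac{1}{8}\pi r^2.$$
The weak convergence $T_k \to T$ then yields $T(\chi\sigma) \geq \pi r^2/8 > 0$, so $T \neq 0$.

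The delicate step is the application of the monotonicity formula --- all other ingredients (the mass identity for $J$-holomorphic currents, the semicalibration comparison, and weak convergence) are immediate. The main point to confirm carefully is that the hypotheses of Proposition \ref{prop:monotonicity} hold \emph{uniformly} in $k$: specifically, that the chosen $p_k$ remains a fixed positive $g$-distance from $u_k(\partial C_k^0)$, so that the radius $r_0$ does not shrink with $k$. This is guaranteed by the cylindrical structure of the $\eta$-adapted metric together with the choice to center at level $a = 0$.
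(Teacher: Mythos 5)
Your argument is correct and follows essentially the same route as the paper: use the crossing component $C_k^0$ to locate points at level $a=0$, extract a limit point, invoke the monotonicity formula of Proposition \ref{prop:monotonicity} for a $k$-uniform area lower bound in a fixed ball, and pair against a cutoff times the semicalibration $da\wedge\lambda+\omega$ before passing to the weak limit. The only cosmetic difference is that you route the lower bound through the localized mass $\mathbf{M}_{g,U}(T_k)$ via Lemma \ref{lem:semiCalibratedMass}, whereas the paper bounds $T_k(\chi\cdot(da\wedge\lambda+\omega))$ directly by the area integral over $u_k^{-1}(B_\delta(z_k))$; both are sound.
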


\begin{proof}
Recall that each $C_k$ has at least one connected component $C_k^0 \subset C_k$ such that $(a \circ u_k)(C_k^0) = [-\epsilon, \epsilon]$. 

It follows that for every $k$ there is a point $z_k \in u_k(C_k^0)$ such that 
$$a(z_k) = 0$$
and the ball $B_{\epsilon/100}(z_k)$ does not intersect $u_k(\partial C_k^0)$. 
Therefore, the monotonicity formula as in Proposition \ref{prop:monotonicity} implies that there is some $k$-independent constant $\delta \in (0, \epsilon/100)$ depending only on $\epsilon$, $M$, and the choice of almost-Hermitian structure $(J, g)$ on $\mathbb{R} \times M$ such that $$\text{Area}_{u_k^*g}(u_k^{-1}(B_\delta(z_k)) \geq \frac{\pi}{2}\delta^2$$ for every $k$. After passing to a subsequence, we may assume that the $z_k$ converge to some point $z \in \{0\} \times M$, and $z_k \in B_{\delta}(z)$ for every $k$. 

Now pick a smooth cutoff function 
$$\chi: (-\epsilon, \epsilon) \times M \to [0,1]$$
such that $\chi \equiv 1$ on $B_{\epsilon/25}(z)$ and $\chi \equiv 0$ outside of $B_{\epsilon/10}(z)$. 

It follows by definition that, for every $k$,
\begin{align}
    T_k(\chi \cdot (da \wedge \lambda + \omega)) &= \int_{C_k} u_k^*(\chi \cdot (da \wedge \lambda + \omega)) \nonumber \\
    &\geq \int_{u_k^{-1}(B_{\epsilon/25}(z))} u_k^*(da \wedge \lambda + \omega) \label{eq:nonzeroSupport1} \tag{\textbullet}\\
    &\geq \int_{u_k^{-1}(B_{\delta}(z_k))} u_k^*(da \wedge \lambda + \omega) \label{eq:nonzeroSupport2} \tag{\textbullet\textbullet} \\
    &= \label{eq:nonzeroSupport3} \tag{\textbullet\textbullet\textbullet} \text{Area}_{u_k^*g}(u_k^{-1}(B_{\delta}(z_k))) \\
    &\geq \frac{\pi}{2}\delta^2. \nonumber
\end{align}

The inequality (\ref{eq:nonzeroSupport1}) makes use of the fact that $\chi \equiv 1$ on $B_{\epsilon/25}(z)$, and the inequality (\ref{eq:nonzeroSupport2}) makes use of the fact that $z_k$ is a distance of at most $\delta < \epsilon/100$ from $z$. The equality (\ref{eq:nonzeroSupport3}) uses the fact that $u_k^*(da \wedge \lambda + \omega)$ agrees with the volume form of the pullback metric $u_k^*g$. 

It now follows that
$$T(\chi \cdot (da \wedge \lambda + \omega)) = \lim_{k \to \infty} T_k(\chi \cdot (da \wedge \lambda + \omega)) \geq \frac{\pi}{2}\delta^2 > 0$$
so $T$ is not equal to zero. 

\end{proof}

Next, we show that $T$ has support invariant under translation in the $\mathbb{R}$-coordinate. Precisely, if $(a, p) \in (-\epsilon, \epsilon) \times M$ lies in the support of $T$, we will show that $(b, p)$ lies in the support of $T$ for any other $b \in (-\epsilon, \epsilon)$. This is equivalent to showing that, for any compactly supported two-form $\alpha \in \Omega^2_c( (-\epsilon, \epsilon) \times M)$, if we define for all sufficiently small $s \in \mathbb{R}$ the translated two-form
$$\tau_s^*\alpha(a, p) = \alpha(a + s, p)$$
that 
$$T(\tau_s^*\alpha) = T(\alpha).$$

Linearizing this condition shows that it is equivalent to the identity $$T(\mathcal{L}_{\partial_a}\alpha) = 0$$ where $\mathcal{L}$ denotes the Lie derivative. 

\begin{lem} \label{lem:translationInvariantSupport}
The support of $T$ is invariant under translation in the $\mathbb{R}$-coordinate. 
\end{lem}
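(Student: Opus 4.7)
The plan is to prove the infinitesimal form $T(\mathcal{L}_{\partial_a}\alpha) = 0$ for every compactly supported two-form $\alpha$ on $(-\epsilon,\epsilon) \times M$. Support invariance then follows by integrating: for any such $\alpha$ and any $s$ small enough that $\tau_s^*\alpha$ is still supported in $(-\epsilon,\epsilon) \times M$, one has $\frac{d}{ds} T(\tau_s^*\alpha) = T(\tau_s^*\mathcal{L}_{\partial_a}\alpha) = 0$, so $T(\tau_s^*\alpha) = T(\alpha)$. By Cartan's formula
$$\mathcal{L}_{\partial_a}\alpha = d(\iota_{\partial_a}\alpha) + \iota_{\partial_a}d\alpha,$$
and since $T$ is closed on $(-\epsilon,\epsilon) \times M$ (as a weak limit of the closed currents $T_k$ from Lemma \ref{lem:currentsAreAllClosed}) while $\iota_{\partial_a}\alpha$ is a compactly supported one-form, the first summand drops out. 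The whole task thus reduces to showing $T(\iota_{\partial_a}d\alpha) = 0$.

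The heart of the argument is to pin down the approximate tangent planes $\overrightarrow{T}(x)$. I first thin $\{t_k\}$, within the subsequence already used to extract $T$, so that $t_{k+1} - t_k > 2\epsilon$; the subsurfaces $C_k = (a \circ u)^{-1}([t_k - \epsilon, t_k + \epsilon]) \subset C$ are then pairwise disjoint. Because $\omega$ on $\mathbb{R} \times M$ is pulled back from $M$, the form $u_k^*\omega$ coincides with $u^*\omega|_{C_k}$, so $\omega$-finiteness gives
$$\sum_k \int_{C_k} u_k^*\omega \leq E_\omega(u) < \infty,$$
hence $\int_{C_k} u_k^*\omega \to 0$. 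For any compactly supported function $\chi$ on $(-\epsilon,\epsilon) \times M$, the pointwise nonnegativity of $u_k^*\omega$ from Lemma \ref{lem:omegaNonnegative} yields
$$|T_k(\chi\omega)| \leq \|\chi\|_{C^0}\int_{C_k} u_k^*\omega \longrightarrow 0,$$
and weak convergence then forces $T(\chi\omega) = 0$ for every such $\chi$. Combined with the fact that $\omega_x(\overrightarrow{T}(x)) \geq 0$ for $\mathcal{H}^2_g$-a.e.\ $x$ in the support of $T$ (because $\overrightarrow{T}$ is a positively oriented $J$-invariant $2$-vector), the integer-rectifiable formula of Definition \ref{defn:integerRectifiable} forces $\omega_x(\overrightarrow{T}(x)) = 0$ almost everywhere on the support of $T$.

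A short algebraic computation identifies the only $J$-invariant two-plane on which $\omega$ vanishes. Using the splitting $T(\mathbb{R} \times M) = \langle \partial_a\rangle \oplus \langle X\rangle \oplus \xi$, the identities $J\partial_a = X$, $JX = -\partial_a$, and the $\omega$-compatibility of $J|_\xi$, any unit vector $V = c\partial_a + dX + w$ with $w \in \xi$ satisfies $\omega(V, JV) = \omega(w, Jw)$, which is strictly positive unless $w = 0$. Hence $\overrightarrow{T}$ is $\mathcal{H}^2_g$-a.e.\ a positive multiple of $\partial_a \wedge X$. On such a plane,
$$(\iota_{\partial_a}d\alpha)(\partial_a, X) = d\alpha(\partial_a,\partial_a,X) = 0$$
by antisymmetry, so substituting into the integer-rectifiable representation of $T$ gives $T(\iota_{\partial_a}d\alpha) = 0$ as required.

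The step I expect to be the main obstacle is the passage from the integrated vanishing $T(\chi\omega) = 0$ to the pointwise conclusion $\omega(\overrightarrow{T}) = 0$ almost everywhere. This step is not purely formal: it genuinely relies on the rectifiable structure of $T$ supplied by the regularity theory for semicalibrated currents (Theorem \ref{thm:semicalibratedRegularity}) together with the sign information from $J$-invariance that makes $\omega(\overrightarrow{T})$ a nonnegative density. Once this pointwise identification of tangent planes is in hand, the reduction via Cartan's formula and the antisymmetry computation are formal.
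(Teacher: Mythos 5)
Your proof is correct, but it takes a genuinely different route from the paper's. The paper never touches the rectifiable structure of the limit $T$ at this stage: it works entirely with the approximating currents $T_k$, decomposes $\alpha = \alpha_0 + da \wedge \alpha_1$, bounds $(\iota_{\partial_a}d\alpha)(V, JV)$ pointwise by $10\|\alpha\|_{C^1}\,\omega(V,JV)^{1/2}$ using the orthogonal splitting $V = da(V)\partial_a + \lambda(V)X + \pi_\xi V$, and then applies H\"older together with the uniform area bound and $E_\omega(u_k) \to 0$ to get $T_k(\iota_{\partial_a}d\alpha) \to 0$ directly. You instead pass to the limit first, show $T(\chi\omega) = 0$, and use the integer-rectifiable representation to pin down $\overrightarrow{T}$ as (a.e.) the oriented plane $\partial_a \wedge X$, after which the conclusion is pure antisymmetry. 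Both arguments rest on the same analytic input ($E_\omega(u_k) \to 0$ plus non-negativity of $\omega$ on $J$-invariant planes), but yours buys more: the identification of the approximate tangent planes immediately gives Lemma \ref{lem:flowInvariantSupport} as well, since $(\iota_X d\alpha)(\partial_a, X) = 0$ by the same antisymmetry, whereas the paper has to rerun a variant of its estimate there. The trade-off is that the paper's estimate is self-contained at the level of the $T_k$, while you must invoke integer rectifiability of $T$. One small correction: the step you flag as the main obstacle does not actually need the regularity theory of Theorem \ref{thm:semicalibratedRegularity}; integer rectifiability of $T$ is already supplied by the Federer--Fleming compactness theorem (Theorem \ref{thm:federerFlemingCompactness}), and the a.e.\ $J$-invariance and positive orientation of $\overrightarrow{T}$ come from Definition \ref{defn:jHolomorphicCurrents} and Lemma \ref{lem:jHolomorphicSemicalibration}; together with $\theta \geq 0$ these make $\theta\,\langle\omega,\overrightarrow{T}\rangle$ a non-negative density, and the vanishing of its integral against all non-negative $\chi$ forces it to vanish $\mathcal{H}^2_g$-a.e.\ by a standard argument.
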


\begin{proof}
From the discussion above the statement of the lemma, recall that it suffices to show that, for any compactly supported two-form 
$$\alpha \in \Omega^2_c((-\epsilon, \epsilon) \times M)$$
that 
$$T(\mathcal{L}_{\partial_a}\alpha) = 0.$$

Since $T$ is closed, this is equivalent by Cartan's formula to showing
$$T(\iota_{\partial_a} d\alpha) = 0.$$

Now write $\alpha = \alpha_0 + da \wedge \alpha_1$ where $\alpha_0$ is a two-form that contracts with $\partial_a$ to zero, and $\alpha_1$ is a one-form that contracts with $\partial_a$ to zero.

We then compute for any pair of tangent vectors $V$ and $W$ that
$$(\iota_{\partial_a} d\alpha)(V, W) = d\alpha_0(\partial_a, V, W) - d^M\alpha_1(V, W).$$

Here $d^M\alpha_1$ denotes the exterior derivative along $M$, so in particular $d^M\alpha_1$ contracts with $\partial_a$ to zero. We use this computation to estimate $T_k(\iota_{\partial_a}d\alpha)$ for every $k$. 

By definition,
\begin{align*}
    T_k(\iota_{\partial_a}d\alpha) &= \int_{C_k} u_k^*(\iota_{\partial_a}d\alpha) \\
    &= \int_{C_k} (\iota_{\partial_a}d\alpha)(du_k(e), Jdu_k(e)) \text{dvol}_{u_k^*g}
\end{align*}
where $e$ denotes any tangent vector to $C_k$ such that $|du_k(e)|_g = 1$. 

Now let $V$ be any tangent vector in the target $[-\epsilon, \epsilon] \times M$. We can decompose $V$ orthogonally as 
$$V = da(V)\partial_a + \lambda(V)X + \pi_\xi V$$
where $\pi_\xi$ denotes orthogonal projection onto the tangent distribution $\xi$.

By our earlier computation, for any tangent vector $V$ of length $1$ in the target $[-\epsilon, \epsilon] \times M$, we find
\begin{align*}
    (\iota_{\partial_a} d\alpha)(V, JV) &= d\alpha_0(\partial_a, V, JV) - d^M\alpha_1(V, JV) \\
    &= d\alpha_0(\partial_a, \lambda(V)X + \pi_\xi V, da(V)X + J\pi_\xi V) \\
    &\qquad + d^M\alpha_1(\lambda(V)X + \pi_\xi V, da(V)X + J\pi_\xi V) \\
    &\leq (\|d\alpha_0\|_{C^0} + \|d^M\alpha_1\|_{C^0})[(\lambda(V) + da(V) + \|\pi_\xi V\|_g)\|\pi_\xi V\|_g \ \\
    &\leq 10\|\alpha\|_{C^1}\|\pi_\xi V\|_g \\
    &= 10\|\alpha\|_{C^1}\omega(V, JV)^{1/2}.
\end{align*}

The second inequality follows from the fact that $\|V\|_g = 1$, which implies $\lambda(V) + da(V) + \|\pi_\xi V\|_g \leq 10$. 

It follows by applying the above derivation with $V = du_k(e)$ and applying the H\"older inequality that
\begin{align*}
    T_k(\iota_{\partial_a}d\alpha) &\leq 10|\alpha|_{C^1} \int_{C_k} \omega(du_k(e), Ju_k(e))^{1/2} \text{dvol}_{u_k^*g} \\
    &\leq 10\|\alpha\|_{C^1}(\int_{C_k} u^*\omega)^{1/2}\text{Area}_{u_k^*g}(C_k) \\
    &\leq 10\|\alpha\|_{C^1}\text{Area}_{u_k^*g}(C_k)E_\omega(u_k).
\end{align*}

Since the original map $u$ has finite $\omega$-energy, it follows that $E_\omega(u_k) \to 0$. We also know that $\text{Area}_{u_k^*g}(C_k)$ is uniformly bounded, so it follows that
$$\lim_k T_k(\iota_{\partial_a}d\alpha) = 0.$$

This implies that $T(\iota_{\partial_a}d\alpha) = 0$ which implies the lemma. 
\end{proof}

We conclude by showing that $T$ has support invariant under the flow of the Hamiltonian vector field $X$. 

\begin{lem} \label{lem:flowInvariantSupport}
The support of $T$ is invariant under the flow of $X$.
\end{lem}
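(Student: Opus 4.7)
The plan is to follow the template of Lemma \ref{lem:translationInvariantSupport} and prove the stronger statement that $T$ itself is invariant under the flow of $X$, regarded as the translation-invariant vector field pulled back to $(-\epsilon,\epsilon)\times M$; invariance of the support then follows, since pushforward along a diffeomorphism carries the support of a current to its image. Because $T$ is closed (as the weak limit of the closed currents $T_k$ from Lemma \ref{lem:currentsAreAllClosed}), Cartan's formula reduces the task to establishing
\[
T(\iota_X\, d\alpha)=0 \qquad\text{for every }\alpha\in\Omega^2_c((-\epsilon,\epsilon)\times M).
\]

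My strategy is to estimate $T_k(\iota_X\, d\alpha)$ directly and pass to the weak limit. Since $T_k$ is integration over the $J$-holomorphic map $u_k$, we have
\[
T_k(\iota_X\,d\alpha)=\int_{C_k}(\iota_X\, d\alpha)(du_k(e),J\,du_k(e))\,\mathrm{dvol}_{u_k^*g},
\]
where $e$ is a local $u_k^*g$-unit tangent vector to $C_k$. The key algebraic input is a pointwise bound
\[
|d\alpha(X,V,JV)|\le C\,\|d\alpha\|_{C^0}\,\|\pi_\xi V\|_g = C\,\|d\alpha\|_{C^0}\,\omega(V,JV)^{1/2}
\]
valid for any $g$-unit vector $V$ on $\mathbb{R}\times M$. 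This is proved by writing $V=da(V)\partial_a+\lambda(V)X+\pi_\xi V$ and $JV=-\lambda(V)\partial_a+da(V)X+J\pi_\xi V$, then expanding $d\alpha(X,V,JV)$ trilinearly in the last two slots: each term in which neither $\pi_\xi V$ nor $J\pi_\xi V$ appears contains a repeated $\partial_a$ or a repeated $X$ and so vanishes by antisymmetry, while the surviving terms each carry a factor of $\pi_\xi V$ or $J\pi_\xi V$ and are therefore controlled by $\|\pi_\xi V\|_g$.

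Substituting $V=du_k(e)$ and applying the Cauchy-Schwarz inequality yields
\[
|T_k(\iota_X\,d\alpha)|\le C\,\|d\alpha\|_{C^0}\,E_\omega(u_k)^{1/2}\,\mathrm{Area}_{u_k^*g}(C_k)^{1/2}.
\]
The sub-surfaces $C_k$ are pairwise disjoint in $C$ for $k$ large (since $t_k\to\infty$ monotonically), so $E_\omega(u_k)\to 0$ by the finiteness of the total $\omega$-energy $E_\omega(u)$. The areas $\mathrm{Area}_{u_k^*g}(C_k)$ are uniformly bounded by Theorem \ref{thm:exponentialAreaBoundsBasic}, using the hypothesis that $E_\lambda(u,t_k)$ is uniformly bounded. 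Hence $T_k(\iota_X\,d\alpha)\to 0$, which by weak convergence gives $T(\iota_X\,d\alpha)=0$.

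The main technical content is the algebraic identity above, which amounts to carefully exploiting antisymmetry together with the relations $J\partial_a=X$ and $JX=-\partial_a$; everything else is a direct transcription of the proof of Lemma \ref{lem:translationInvariantSupport} with $\partial_a$ replaced by $X$, and with one extra factor of $\omega(V,JV)^{1/2}$ absorbed via Cauchy-Schwarz against the uniformly bounded areas.
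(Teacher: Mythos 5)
Your proposal is correct and follows essentially the same route as the paper: reduce via closedness and Cartan's formula to showing $T(\iota_X\,d\alpha)=0$, establish the pointwise bound $|\iota_X d\alpha(V,JV)|\lesssim \|\alpha\|_{C^1}\,\|\pi_\xi V\|_g=\|\alpha\|_{C^1}\,\omega(V,JV)^{1/2}$ on unit vectors, and conclude by Cauchy--Schwarz against the uniformly bounded areas together with $E_\omega(u_k)\to 0$. The only (cosmetic) difference is that you derive the pointwise bound by decomposing the vectors $V$ and $JV$ and using antisymmetry, whereas the paper decomposes the form $\alpha=\alpha_0+da\wedge\alpha_1$; both yield the same estimate.
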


\begin{proof}
The proof is similar to that of Lemma \ref{lem:translationInvariantSupport}, but now we need to show that 
$$T(\iota_X d\alpha) = 0$$
for any compactly supported two-form $\alpha$. 

We split up $\alpha$ as in the proof of Lemma \ref{lem:translationInvariantSupport} and compute 
$$\iota_X d\alpha(V, W) = \iota_X d\alpha_0(V, W) + (da \wedge \iota_X d^M\alpha_1)(V, W).$$

This implies for any unit length tangent vector $V$ that
$$\iota_X d\alpha(V, JV) \leq 10\|\alpha\|_{C^1}\|\pi_\xi V\|_g$$
as before. 

This bound then shows that $T(\iota_X d\alpha) = 0$ as desired. 
\end{proof}

Next, we apply the regularity theory of \cite{DSS17a, DSS17b} to show that $T$ is given by integration with multiplicity over a $J$-holomorphic curve.

By Lemma \ref{lem:jHolomorphicCurrentRegularity}, there is a Riemann surface $(\Sigma, j)$, a smooth $J$-holomorphic map $v$ from $\Sigma$ to $(-\epsilon, \epsilon) \times M$ and a locally constant, positive integer-valued function $\theta$ on $\Sigma$ such that for any two-form $\alpha$ compactly supported in $(-\epsilon, \epsilon) \times M$, 
$$T(\alpha) = \int_{\Sigma} \theta \cdot v^*\alpha.$$

Observe that the support of $T$ is contained in the image of the pseudoholomorphic map $v$. By Lemmas \ref{lem:nonzeroSupport}, \ref{lem:translationInvariantSupport} and \ref{lem:flowInvariantSupport}, it follows that the image $v(\Sigma)$ contains at least one subset of the form $(-\epsilon, \epsilon) \times \gamma$, where $\gamma$ is a trajectory of the Hamiltonian vector field $X$.

However, if $\gamma$ is not periodic, the intersection of this subset with any open set 
$$U_\delta = (-(\epsilon - \delta), \epsilon - \delta) \times M,$$
with $\delta < \epsilon$, equal to 
$$(-(\epsilon - \delta), (\epsilon - \delta)) \times \gamma$$
has infinite two-dimensional Hausdorff measure. On the other hand, the intersection of the image $v(\Sigma)$ of the $J$-holomorphic map $v$ with $U_\delta$ has finite two-dimensional area. By definition,
$$\text{Area}_{v^*g}(v^{-1}(v(\Sigma) \cap U_\delta)) \leq \mathbf{M}_{g, U_\delta}(T).$$

Then it suffices to show that $\mathbf{M}_{g,U_\delta}(T)$ is finite. This follows from Lemma \ref{lem:uniformMassBound}, which shows that $\mathbf{M}_{g, U_\delta}(T_k)$ is uniformly bounded, and the definition of weak convergence of currents. Therefore, $\gamma$ must be periodic and we have shown that $X$ has a periodic orbit, and proved Proposition \ref{prop:periodicOrbits}.

\section{A weaker version of Corollary \ref{cor:uniqueErgodicity}} \label{sec:weakerNonExact}

In this section, we prove the following proposition regarding non-unique ergodicity of the Hamiltonian vector field $X$ of a non-exact framed Hamiltonian manifold. This can be regarded as a weaker version of Corollary \ref{cor:uniqueErgodicity}. 

\begin{prop} \label{prop:mainNonExactDim3}
Suppose $(M^{2n+1}, \eta = (\lambda, \omega))$ is a framed Hamiltonian manifold such that
\begin{enumerate}
    \item $M$ is of dimension three and not a $T^2$ bundle over $S^1$ \emph{or} $M$ is of any odd dimension greater than three and does not fiber over the circle,
    \item $\omega^n$ is not exact.
\end{enumerate}

Then the Hamiltonian vector field $X$ is not uniquely ergodic. 
\end{prop}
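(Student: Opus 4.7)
The plan is to argue by contradiction. Suppose $X$ is uniquely ergodic; then the only invariant probability measure is $\mu = V^{-1}\lambda\wedge\omega^n$, where $V = \int_M \lambda\wedge\omega^n > 0$. For any smooth closed one-form $\alpha$, decompose $\alpha = \alpha(X)\lambda + \beta$ with $\beta(X)\equiv 0$. Since both $\beta$ and $\omega$ annihilate $X$, the top form $\beta\wedge\omega^n$ vanishes identically, giving
\[
\int_M \alpha(X)\,d\mu \;=\; V^{-1}\int_M \alpha\wedge\omega^n.
\]
Consequently the Schwartzman asymptotic cycle $\rho_\mu \in H_1(M;\mathbb{R})$ of the unique invariant measure is Poincar\'e dual to $V^{-1}[\omega^n]$.

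By hypothesis $[\omega^n]\neq 0$, so the functional $H^1(M;\mathbb{R})\to\mathbb{R}$ sending $[\alpha]\mapsto \int_M\alpha\wedge\omega^n$ is nonzero. Since $H^1(M;\mathbb{Q})$ is dense in $H^1(M;\mathbb{R})$, one can choose an integral class $c\in H^1(M;\mathbb{Z})$ with $\int_M c\cup[\omega^n] > 0$, realized by a smooth map $f\colon M\to S^1$ with $[f^*d\theta]/(2\pi)=c$. The identity above then yields $\int_M df(X)\,d\mu > 0$. By Schwartzman's theorem \cite{Schwartzman57}, the strict positivity of the pairing $\int df(X)\,d\mu' > 0$ over the entire (in this case one-point) set of invariant probability measures implies that $f$ is homotopic to a smooth fibration $p\colon M\to S^1$ whose fibers are transverse to $X$. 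In particular $M$ fibers over $S^1$, which already contradicts the hypothesis when $\dim M \geq 5$.

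In the three-dimensional case a further reduction is needed. Since $p$ is a submersion transverse to $X$, the closed one-form $\lambda_0 := p^*d\theta$ satisfies $\lambda_0(X)>0$ everywhere; the same decomposition trick as above gives $\lambda_0\wedge\omega = \lambda_0(X)\,\lambda\wedge\omega>0$, so $(\lambda_0,\omega)$ is a framed Hamiltonian structure. Moreover $d\lambda_0=0$ trivially contains $\ker\omega$, so $(\lambda_0,\omega)$ is in fact stable Hamiltonian, and its associated Hamiltonian vector field $X_0 = \lambda_0(X)^{-1}X$ is a positive pointwise rescaling of $X$; in particular $X$ and $X_0$ share the same unparametrized orbits. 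Now invoke the Hutchings--Taubes theorem \cite{HutchingsTaubes09}: since $M$ is a closed oriented $3$-manifold with a stable Hamiltonian structure and is \emph{not} a $T^2$-bundle over $S^1$, the Reeb-type vector field $X_0$ admits a periodic orbit $\gamma$. The averaged Dirac-type measure along $\gamma$ (cf.\ Example \ref{exe:invariantMeasureOrbit}) is an $X$-invariant probability measure supported on a proper closed subset of $M$, contradicting unique ergodicity.

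The main obstacle is matching the Schwartzman criterion precisely to our setting: we must justify that a single integral class $c$ pairing positively with $\rho_\mu$ suffices (which is where unique ergodicity is essential, since Schwartzman's criterion ordinarily requires positivity over \emph{all} invariant probability measures), and we must check that the reparameterized structure $(\lambda_0,\omega)$ falls within the scope of Hutchings--Taubes. Once these two points are in place the contradiction is immediate, so the remaining work is essentially verification rather than new ideas.
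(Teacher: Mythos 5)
Your proposal is correct and follows essentially the same route as the paper: use the non-exactness of $\omega^n$ to show the unique invariant measure has nonzero Schwartzman asymptotic cycle, invoke Schwartzman's criterion to produce a fibration of $M$ over $S^1$ transverse to $X$ (contradiction in dimension $\geq 5$), and in dimension three endow $M$ with the induced stable Hamiltonian structure and apply Hutchings--Taubes to find a periodic orbit. The only cosmetic difference is that you cite the fibration form of Schwartzman's theorem directly, whereas the paper cites the weaker ``global cross section'' form and uses unique ergodicity (via density of orbits) to upgrade it to a mapping-torus/fibration structure before building the stable Hamiltonian structure there.
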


Before we begin the proof of Proposition \ref{prop:mainNonExactDim3}, we will fix some notation and definitions. 

\begin{defn}[Mapping torus]
Let $\Sigma$ be a closed, even-dimensional manifold equipped with a volume form $\theta$. Suppose there is a diffeomorphism
$$\Phi: \Sigma \to \Sigma$$
such that $\Phi$ preserves the volume, i.e. $\Phi^*\theta = \theta$. 

Then the \emph{mapping torus} of $\Phi$ is the three-manifold
$$M_\Phi = [0,1] \times \Sigma / \sim_\Phi$$
where the equivalence relation $\sim_\Phi$ is defined by
    $$(p, 1) \sim (\Phi(p), 0)$$
for every $p \in \Sigma$.
\end{defn}

The mapping torus $M_\Phi$ of a diffeomorphism $\Phi$ of an even-dimensional smooth manifold $\Sigma$ preserving a volume form $\theta$ has a natural, non-vanishing vector field defined as follows. Write $t$ for the $[0,1]$-coordinate on $[0,1] \times \Sigma$. Then, if
$$\pi: [0,1] \times \Sigma \to M_\Phi$$
is the natural projection, the pushforward vector field $\pi_*\partial_t$ is a non-vanishing vector field on $M_\Phi$. 

For brevity, we will also denote this vector field by $\partial_t$. Note that by definition, there is a one-to-one correspondence between periodic orbits of $\partial_t$ on $M_\Phi$ and periodic orbits of the diffeomorphism $\Phi$. 

The first step of the proof of Proposition \ref{prop:mainNonExactDim3} is the following proposition.

\begin{prop} \label{prop:actuallyAMappingTorus}
Let $(M^{2n+1}, \lambda, \omega)$ be a closed framed Hamiltonian manifold such that $\omega^n$ is not exact and the Hamiltonian vector field $X$ is uniquely ergodic. There is a closed manifold $\Sigma$ of dimension $2n$ with volume form $\theta$ and a volume-preserving diffeomorphism 
$$\Phi: \Sigma \to \Sigma$$
such that $M$ is diffeomorphic to the mapping torus $M_\Phi$. 

Moreover, a diffeomorphism 
$$h: M_\Phi \to M$$
can be chosen such that the pushforward $h_*\partial_t$ is equal to the Hamiltonian vector field $X$.
\end{prop}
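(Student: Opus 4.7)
The plan is to invoke Schwartzman's theorem on asymptotic cycles to produce a global cross-section for $X$, then upgrade that cross-section to a mapping torus identification under which $\partial_t$ is identified with $X$. The key input is the non-exactness of $\omega^n$, which will force the Schwartzman asymptotic cycle of the unique invariant measure to be non-zero.

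The first step is to compute the asymptotic cycle. Unique ergodicity forces the unique invariant probability measure to be $\tilde\mu := V^{-1}\,\lambda\wedge\omega^n$ with $V = \int_M \lambda\wedge\omega^n$. The Schwartzman asymptotic cycle $A\in H_1(M;\mathbb{R})$ pairs with a closed one-form $\alpha$ via $\langle A,[\alpha]\rangle = \int_M \alpha(X)\,d\tilde\mu$. Decomposing $\alpha = \alpha(X)\lambda + \beta$ with $\beta(X)\equiv 0$, the form $\beta\wedge\omega^n$ is a top form annihilated by $\iota_X$ (both factors are), and because $\iota_X(\lambda\wedge\omega^n) = \omega^n \ne 0$ every $X$-annihilated top form vanishes. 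Hence $\alpha\wedge\omega^n = \alpha(X)\lambda\wedge\omega^n$ and
\[
\langle A,[\alpha]\rangle \;=\; V^{-1}\int_M \alpha\wedge\omega^n,
\]
so $A = V^{-1}\,\mathrm{PD}([\omega^n])$, which is non-zero by the assumption that $\omega^n$ is not exact.

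The second step is to produce a closed one-form positive on $X$. Since integer classes span $H^1(M;\mathbb{R})$, there is a primitive $[\alpha_0]\in H^1(M;\mathbb{Z})$ with $\langle A,[\alpha_0]\rangle > 0$. By a classical theorem of Schwartzman, applied to the uniquely ergodic flow $X$ whose invariant measure is fully supported and whose asymptotic cycle pairs positively with the integer class $[\alpha_0]$, there exists a smooth representative $\alpha \in [\alpha_0]$ with $\alpha(X) > 0$ pointwise on $M$. Integrating $\alpha$ yields a smooth map $f: M \to \mathbb{R}/\mathbb{Z}$ with $f^*dt = \alpha$. Since $df(X) = \alpha(X) > 0$, this $f$ is a submersion, and hence a fibration by Ehresmann's theorem (as $M$ is closed). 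The fibre $\Sigma := f^{-1}(0)$ is a closed $2n$-dimensional manifold transverse to $X$. It inherits a natural volume form $\theta = \iota_X(\lambda\wedge\omega^n)|_\Sigma$, and the first-return map $\Phi: \Sigma \to \Sigma$ is a $\theta$-preserving diffeomorphism (preservation of $\theta$ follows from the fact that $X$ preserves $\lambda\wedge\omega^n$).

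The third and final step is to upgrade this suspension to a mapping torus identification with $h_*\partial_t = X$, which demands that the $X$-return time to $\Sigma$ be identically $1$. Equivalently, I need to modify $\alpha$ within its cohomology class to a representative $\alpha'$ with $\alpha'(X) \equiv 1$, which reduces to solving the flow-cohomological equation $X(f_0) = 1 - \alpha(X)$ for some smooth function $f_0$ on $M$. Under unique ergodicity, the necessary integral condition is $\int_M \alpha(X)\,d\tilde\mu = 1$, i.e.\ $\langle A,[\alpha_0]\rangle = 1$, which can be arranged by rescaling the framed Hamiltonian structure $\eta$ (for instance by rescaling $\omega$ so that $V$ takes the required value). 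The main obstacle is the smooth solvability of the cohomological equation: uniquely ergodic systems do not automatically satisfy the Gottschalk-Hedlund bounded-Birkhoff-sums condition for continuous solutions. I expect this to be handled by a delicate deformation of the cross-section $\Sigma$ along $X$-orbits, using the smooth structure and the freedom to choose different primitive integer classes paired positively with $A$ to normalize the return-time cocycle on $\Sigma$ to be a coboundary.
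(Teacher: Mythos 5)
Your first two steps follow the paper's route in all essentials. The paper also computes that the unique invariant measure is the normalized volume, identifies its Schwartzman asymptotic cycle with $\mathrm{PD}([\omega^n]) \neq 0$, and invokes Schwartzman's criterion to obtain a global cross section; it then uses density of forward and backward orbits (another consequence of unique ergodicity) to see that every orbit returns infinitely often, and takes $\Phi$ to be the first-return map with $\theta = \omega^n|_\Sigma$ (note $\iota_X(\lambda\wedge\omega^n) = \omega^n$, so your $\theta$ is the same form). Your variant--passing through a closed integral one-form $\alpha$ with $\alpha(X) > 0$ and Ehresmann's theorem--is an equivalent packaging of the same criterion and is fine.

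The genuine problem is your third step, and you have correctly diagnosed why it is a problem: arranging $h_*\partial_t = X$ \emph{on the nose} is equivalent to finding a representative $\alpha'$ of the class with $\alpha'(X) \equiv 1$, i.e.\ to solving the cohomological equation $Xf_0 = 1 - \alpha(X)$, and unique ergodicity alone does not give continuous (let alone smooth) solutions--the Gottschalk--Hedlund theorem needs bounded Birkhoff sums, and there are uniquely ergodic flows with smooth cocycles of zero mean that are not coboundaries. Your proposed remedy (deforming $\Sigma$ along orbits, or switching to other integral classes) is only a reformulation of the same cocycle-triviality question and does not close the gap. You should be aware that the paper does not solve this equation either: it defines $h(t,p) = \phi_X^{t\mathcal{R}_1(p)}(p)$, for which $h_*\partial_t = \mathcal{R}_1(p)\cdot X$, a positive pointwise rescaling of $X$ rather than $X$ itself. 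Strictly speaking the literal identity $h_*\partial_t = X$ claimed in the proposition is therefore only established up to a positive time reparametrization; but this is all that is used in the application (Proposition \ref{prop:mainNonExactDim3}), since periodic orbits, minimal sets, and supports of invariant measures are unchanged under positive rescaling of the vector field. The correct fix for your write-up is to abandon the normalization of the return time and state the conclusion in this rescaled form.
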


We will defer the proof of Proposition \ref{prop:actuallyAMappingTorus} to Section \ref{subsec:mappingTorusProp}. The proof is an application of a classical result of Schwartzman \cite{Schwartzman57, Sullivan76}, written out below in Proposition \ref{prop:asymptoticCycles}.

We will now show the following lemma. This lemma enables us to address any $3$-manifold that is not a $T^2$-bundle over $S^1$ in Proposition \ref{prop:mainNonExactDim3}, rather than merely $3$-manifolds that do not fiber over $S^1$. 

\begin{lem} \label{lem:weinsteinStableHamiltonian}
Let $M_\Phi$ be a mapping torus of a surface diffeomorphism
$$\Phi: \Sigma \to \Sigma$$
that preserves a volume form $\theta$ on $\Sigma$. Then, if $M_\Phi$ is not a $T^2$-bundle over $S^1$, the vector field $\partial_t$ has a periodic orbit.
\end{lem}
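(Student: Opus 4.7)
The plan is to observe that the mapping torus $M_\Phi$ carries a natural stable Hamiltonian structure whose associated Hamiltonian vector field is exactly $\partial_t$, and then invoke the Hutchings--Taubes proof of the Weinstein conjecture for stable Hamiltonian structures on closed three-manifolds.

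First I would produce the stable Hamiltonian structure. On the product $[0,1]\times\Sigma$ with $t$ the coordinate on $[0,1]$, consider the one-form $dt$ and the pullback of $\theta$ along the projection to $\Sigma$. The identification $(p,1)\sim_\Phi (\Phi(p),0)$ defining $M_\Phi$ preserves $dt$ trivially, and preserves the pullback of $\theta$ because $\Phi^*\theta = \theta$. Both forms therefore descend to $M_\Phi$, yielding a one-form $\lambda$ and a two-form $\omega$. One checks directly that $d\lambda=0$, $d\omega=0$ (the latter since $\theta$ is a top-degree form on $\Sigma$), $\lambda\wedge\omega$ is a volume form, $\lambda(\partial_t)\equiv 1$, and $\omega(\partial_t,-)\equiv 0$. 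Hence $(\lambda,\omega)$ is a framed Hamiltonian structure on $M_\Phi$ whose Hamiltonian vector field is $\partial_t$; and since $d\lambda=0$ we have $\ker\omega\subseteq\ker d\lambda$, so this structure is stable in the sense of \cite{BEHWZ03}.

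Having identified $\partial_t$ as the Hamiltonian vector field of a stable Hamiltonian structure on a closed three-manifold, I would then apply the main theorem of Hutchings--Taubes \cite{HutchingsTaubes09}, which asserts that such a vector field always has a periodic orbit, with the possible exception of the case when the underlying three-manifold is a $T^2$-bundle over $S^1$. The hypothesis of the lemma excludes precisely this exceptional case, so $\partial_t$ has a periodic orbit on $M_\Phi$ as claimed.

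There is no real obstacle here: the verification that $(M_\Phi, dt, \omega)$ is a stable Hamiltonian structure is essentially a one-line calculation, and the Hutchings--Taubes theorem—proved using embedded contact homology and Seiberg--Witten theory—is exactly the deep input that powers the conclusion and can be quoted as a black box. The only conceptual point worth emphasizing is that the $T^2$-bundle hypothesis in the lemma is tailored to the exceptional case of \cite{HutchingsTaubes09}, rather than being an artifact of the mapping torus construction.
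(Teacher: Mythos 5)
Your proposal is correct and follows essentially the same route as the paper: both construct the stable Hamiltonian structure $(dt,\omega)$ on $M_\Phi$ from the $\Phi$-invariant volume form $\theta$, observe that $\partial_t$ is its Hamiltonian vector field and that closedness of $dt$ gives stability, and then quote the Hutchings--Taubes theorem excluding only $T^2$-bundles over $S^1$.
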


\begin{proof}
We can give $M_\Phi$ a natural framed Hamiltonian structure for which $\partial_t$ is the Hamiltonian vector field. Recall the volume form $\theta$ on the surface $\Sigma$ preserved by the diffeomorphism $\Phi$. 

Then $\theta$ pulls back to $[0,1] \times \Sigma$, and since it is preserved by $\Phi$, it descends to a closed, nowhere vanishing two-form on $M_\Phi$, which we denote by $\omega$. 

We can also define a closed one-form $dt$ on $M_\Phi$ by pushing forward the corresponding one-form $dt$ on $[0,1] \times \Sigma$ to $M_\Phi$. 

Then it is immediate that $(dt, \omega)$ is a framed Hamiltonian structure with Hamiltonian vector field $\partial_t$. 

Furthermore, $(dt, \omega)$ is a \emph{stable} Hamiltonian structure. Recall that a stable Hamiltonian structure is a framed Hamiltonian structure $(\lambda, \omega)$ such that 
$$\ker(\omega) \subseteq \ker(d\lambda).$$

In this case, $dt$ is closed, so it is immediate that $(dt, \omega)$ is stable. 

Hutchings-Taubes have shown in \cite{HutchingsTaubes09} that, as long as the stable Hamiltonian manifold $(M_\Phi, dt, \omega)$ is not a $T^2$-bundle over $S^1$, then the Hamiltonian vector field $\partial_t$ has a periodic orbit.

We have a priori assumed that $M$ is not a $T^2$-bundle over $S^1$, so $M_\Phi$ is not as well, and the proposition is proved. 
\end{proof}

Proposition \ref{prop:actuallyAMappingTorus} and Lemma \ref{lem:weinsteinStableHamiltonian} immediately prove Proposition \ref{prop:mainNonExactDim3}. 

\begin{proof}[Proof of Proposition \ref{prop:mainNonExactDim3}]

Let $(M, \eta = (\lambda, \omega))$ be a closed, framed Hamiltonian manifold satisfying the conditions of Proposition \ref{prop:mainNonExactDim3}.

Assume for the sake of contradiction that the Hamiltonian vector field, denoted by $X$, is uniquely ergodic.

Then Proposition \ref{prop:actuallyAMappingTorus} shows that $M$ fibers over $S^1$ because it is diffeomorphic to a mapping torus. If $M$ has dimension greater than or equal to five, then by assumption $M$ does not fiber over $S^1$ so we arrive at a contradiction and $X$ cannot be uniquely ergodic in this case.

If $M$ has dimension three, then observe that Lemma \ref{lem:weinsteinStableHamiltonian} shows that the Hamiltonian vector field $X$ has a periodic orbit if and only if the vector field $\partial_t$ on the mapping torus $M_\Phi$ does as well. But by Lemma \ref{lem:weinsteinStableHamiltonian}, the vector field $\partial_t$ does have a periodic orbit, so $X$ has a periodic orbit as well. Since we assumed that $X$ was uniquely ergodic, $X$ does not have a periodic orbit by assumption (see Example \ref{exe:invariantMeasureOrbit}). Therefore, we arrive at a contradiction and $X$ cannot be uniquely ergodic in this case as well.
\end{proof}

To conclude, we observe that we can only deduce non-unique ergodicity for manifolds that do not fiber over $S^1$ in the three-dimensional case, since Hutchings-Taubes' proof of the Weinstein conjecture for stable Hamiltonian structures is crucial to our argument and only works in dimension three. 

Therefore, Proposition \ref{prop:mainNonExactDim3} at the moment falls short of Corollary \ref{cor:uniqueErgodicity} in dimensions higher than three. However, it is clear that the exact same method extends to the following proposition. 

\begin{prop} \label{prop:alternateNonExact}
Let $(M^{2n+1}, \eta = (\lambda, \omega))$ be a framed Hamiltonian manifold such that:
\begin{enumerate}
    \item $\omega^n$ is not exact,
    \item The Weinstein conjecture for stable Hamiltonian structures holds true on $M$. That is, for any stable Hamiltonian structure $\eta' = (\lambda', \omega')$, the associated Hamiltonian vector field $X'$ has a periodic orbit.
\end{enumerate}

Then the Hamiltonian vector field $X$ is not uniquely ergodic. 
\end{prop}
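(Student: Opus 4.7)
My plan is to proceed by contradiction, mirroring the proof of Proposition \ref{prop:mainNonExactDim3} but replacing the Hutchings-Taubes three-dimensional stable Weinstein conjecture with the hypothesized abstract stable Weinstein conjecture on $M$. Assume that $X$ is uniquely ergodic. Since $\omega^n$ is non-exact, Proposition \ref{prop:actuallyAMappingTorus} produces a closed $2n$-manifold $\Sigma$ with volume form $\theta$, a volume-preserving diffeomorphism $\Phi \colon \Sigma \to \Sigma$, and a diffeomorphism $h \colon M_\Phi \to M$ satisfying $h_*\partial_t = X$.

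The central step is to exhibit a stable Hamiltonian structure on $M$ whose associated Hamiltonian vector field is precisely $X$. My candidate is $\eta' = (\mu, \omega)$ with $\mu = (h^{-1})^*dt$; note that $\omega$ is reused from the original structure and only the one-form is replaced. Three checks are needed. First, $\mu(X) = dt((h^{-1})_* X) = dt(\partial_t) = 1$, while $\omega(X,-) \equiv 0$ by the defining property of the Hamiltonian vector field of $\eta$, so once $\eta'$ is shown to be framed Hamiltonian, its Hamiltonian vector field is automatically $X$. Second, stability is immediate since $d\mu = (h^{-1})^*d(dt) = 0$, so $\ker(\omega) \subseteq TM = \ker(d\mu)$ trivially. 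Third, to verify $\mu \wedge \omega^n > 0$, decompose $\mu = \lambda + \beta$ where $\beta(X) \equiv 0$. Using $\iota_X \omega = 0$ and $\beta(X) = 0$, one computes $\iota_X(\beta \wedge \omega^n) = 0$; since $\beta \wedge \omega^n$ is a top-degree form on the $(2n+1)$-manifold $M$ and $X$ is nowhere vanishing, this forces $\beta \wedge \omega^n \equiv 0$. Hence $\mu \wedge \omega^n = \lambda \wedge \omega^n > 0$, and $\eta'$ is indeed a stable framed Hamiltonian structure on $M$ with Hamiltonian vector field $X$.

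With $\eta'$ in hand, hypothesis (2) supplies a periodic orbit $\gamma$ of the Hamiltonian vector field of $\eta'$, namely $X$. As in Example \ref{exe:invariantMeasureOrbit}, the normalized pushforward of Lebesgue measure along $\gamma$ is an $X$-invariant probability measure supported on the closed set $\gamma$. This measure is distinct from the normalized volume density, which has full support, and therefore contradicts the unique ergodicity of $X$.

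The main obstacle I anticipate is the step of promoting the stable Hamiltonian structure naturally present on the mapping torus $M_\Phi$ to a stable Hamiltonian structure on $M$ itself, with the correct Hamiltonian vector field. The only nontrivial algebraic input is the positivity check $\mu \wedge \omega^n > 0$, which reduces cleanly to the elementary fact that a top-dimensional form annihilated under contraction by a nowhere-vanishing vector field must vanish identically. Everything else is either routine or direct transport across the diffeomorphism $h$.
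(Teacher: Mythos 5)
Your proof is correct and follows the strategy the paper itself indicates (``the exact same method extends''): assume unique ergodicity, invoke Proposition \ref{prop:actuallyAMappingTorus} to realize $(M,X)$ as a mapping torus, produce a \emph{stable} Hamiltonian structure whose Hamiltonian vector field is (a positive rescaling of) $X$, and then let hypothesis (2) supply a periodic orbit, contradicting unique ergodicity via Example \ref{exe:invariantMeasureOrbit}. The one place where you genuinely depart from a verbatim transcription of Lemma \ref{lem:weinsteinStableHamiltonian} is in the choice of the two-form: the lemma's construction suspends the cross-section volume form $\theta$, which is a $2n$-form on $\Sigma$ and hence only yields a two-form when $n=1$; in higher dimensions that construction does not literally produce a framed Hamiltonian structure. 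Your fix --- keep the original $\omega$ and replace only the one-form by the closed form $\mu=(h^{-1})^*dt$ --- is exactly the right adaptation, and your positivity check via the decomposition $\mu=\mu(X)\lambda+\beta$ with $\iota_X(\beta\wedge\omega^n)=0$ forcing $\beta\wedge\omega^n\equiv 0$ is clean and correct. (Two harmless remarks: the paper's own proof of Proposition \ref{prop:actuallyAMappingTorus} actually yields $h_*\partial_t=\mathcal{R}_1\cdot X$ rather than $X$, so strictly one gets $\mu(X)>0$ rather than $\mu(X)\equiv 1$; this changes nothing, since the Hamiltonian vector field of $(\mu,\omega)$ is then a positive rescaling of $X$ with the same orbits, and $\mu\wedge\omega^n=\mu(X)\,\lambda\wedge\omega^n>0$ still holds. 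Also, the contradiction at the end should note that the orbit measure is not the volume measure because a one-dimensional orbit cannot have full support in a manifold of dimension $2n+1\geq 3$, which you do.)
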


\subsection{Proof of Proposition \ref{prop:actuallyAMappingTorus}} \label{subsec:mappingTorusProp}

We now give the previously deferred proof of Proposition \ref{prop:actuallyAMappingTorus}. Let $(M, \eta = (\lambda, \omega))$ be a closed framed Hamiltonian manifold such that $\omega^n$ is not exact and the Hamiltonian vector field $X$ is uniquely ergodic.

\begin{defn}
A \textbf{global cross section} of $X$ is an embedded surface $\Sigma$ in $X$ such that:
\begin{itemize}
    \item $\Sigma$ is transverse to $X$,
    \item $\Sigma$ intersects every orbit of the flow of $X$.
\end{itemize}

A \textbf{global hypersurface of section} of $X$ is a global cross section that intersects each orbit of the flow of $X$ infinitely many times in both forward and backward time. 
\end{defn}

We will first show that $X$ must have a global hypersurface of section.

\begin{prop} \label{prop:globalSurfaceOfSection}
Let $(M, \eta = (\lambda, \omega))$ be a closed framed Hamiltonian manifold such that $\omega^n$ is not exact and the Hamiltonian vector field $X$ is uniquely ergodic.
\end{prop}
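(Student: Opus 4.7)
The plan is to produce a closed $1$-form $\tau$ on $M$ with $\tau(X) > 0$ pointwise; Tischler's theorem will then approximate $\tau$ by a closed $1$-form with integer periods (still satisfying $\tau(X) > 0$), giving a smooth fibration $f \colon M \to S^1$ whose fibers are the sought-for global hypersurfaces of section. The key analytic input will be the uniform form of the Birkhoff ergodic theorem valid for uniquely ergodic continuous flows. First, observe that since the Hamiltonian flow preserves the volume form $\mathrm{dvol}_\eta = \lambda \wedge \omega^n$ (Lemma \ref{lem:volumePreserving}), the unique-ergodicity hypothesis forces the unique $X$-invariant Borel probability measure to be $\mu = V^{-1} \mathrm{dvol}_\eta$, with $V = \int_M \lambda \wedge \omega^n$.

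To construct $\tau$, I would first use the non-exactness of $\omega^n$ together with Poincar\'e duality on the closed oriented manifold $M$ to select a smooth closed $1$-form $\alpha$ with $\int_M \alpha \wedge \omega^n > 0$ (after possibly replacing $\alpha$ by $-\alpha$). A brief computation, based on the decomposition $\alpha = \alpha(X)\lambda + \beta$ with $\beta(X) \equiv 0$, shows that $\beta \wedge \omega^n$ is a top-degree form that contracts to zero with the nowhere-vanishing vector field $X$ (using $\iota_X\omega = 0$ and $\iota_X\beta = 0$) and hence vanishes identically, yielding
\[
\int_M \alpha \wedge \omega^n \;=\; \int_M \alpha(X)\, \mathrm{dvol}_\eta \;=\; V \cdot \int_M \alpha(X)\, d\mu.
\]
Thus $\int_M \alpha(X)\, d\mu > 0$. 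Unique ergodicity now upgrades this to pointwise positivity of a flow-average: the Birkhoff averages
\[
A_T(x) \;=\; \frac{1}{T}\int_0^T \alpha(X)(\phi_X^t(x))\, dt
\]
converge uniformly on $M$ to $\int_M \alpha(X)\, d\mu > 0$, so for $T$ large enough, $A_T(x) > 0$ for every $x \in M$. Setting
\[
\tau \;=\; \frac{1}{T} \int_0^T (\phi_X^t)^* \alpha \, dt,
\]
one obtains a closed $1$-form (each $(\phi_X^t)^*\alpha$ is closed), cohomologous to $\alpha$, with $\tau(X)(x) = A_T(x) > 0$ pointwise by the invariance of $X$ under its own flow.

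To finish, perturb the cohomology class $[\tau] \in H^1(M;\mathbb{R})$ slightly to a rational class while preserving the open condition $\tau(X) > 0$: pick closed representatives $\alpha_1, \dots, \alpha_b$ of an integral basis of $H^1(M;\mathbb{R})$, write $[\tau] = \sum t_i[\alpha_i]$, approximate $t_i$ by rationals $t_i'$, and set $\tau' = \tau + \sum_i (t_i'-t_i)\alpha_i$. For small enough approximation errors, $\tau'(X)$ remains bounded below by $\tfrac12 \min_M \tau(X) > 0$. Clearing denominators gives a closed $1$-form $\tau'$ with integer periods and $\tau'(X) > 0$, and hence a smooth submersion $f \colon M \to S^1 = \mathbb{R}/\mathbb{Z}$ with $df = \tau'$. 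The fibers $\Sigma_s = f^{-1}(s)$ are closed smooth hypersurfaces transverse to $X$. Since $\tfrac{d}{dt} f(\phi_X^t(x)) = \tau'(X)(\phi_X^t(x)) \geq c$ for a uniform constant $c > 0$, the lift of $t \mapsto f(\phi_X^t(x))$ to $\mathbb{R}$ is strictly increasing with slope bounded below by $c$, so it runs off to $\pm\infty$ as $t \to \pm\infty$; consequently every orbit meets each $\Sigma_s$ infinitely many times in both forward and backward time. The main obstacle in this plan is the uniform Birkhoff theorem: it is a classical characterization of unique ergodicity for continuous flows on compact manifolds, but it is precisely what is needed to convert the single inequality $\int \alpha(X)\, d\mu > 0$ into the pointwise positivity $\tau(X) > 0$ that drives the Schwartzman-type cross-section construction.
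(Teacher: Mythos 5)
Your proposal is correct, and it reaches the conclusion (a global hypersurface of section for $X$) by a genuinely different and more self-contained route than the paper. The paper uses the same homological input as you do --- Poincar\'e duality and the non-exactness of $\omega^n$ give a closed one-form $\alpha$ with $\int_M \alpha \wedge \omega^n \neq 0$, so the asymptotic cycle of the unique invariant measure $V^{-1}\lambda\wedge\omega^n$ is non-zero --- but then invokes Schwartzman's cross-section theorem (Proposition \ref{prop:asymptoticCycles}) as a black box to produce a global cross section $\Sigma$, and separately upgrades it to a global \emph{hypersurface} of section by observing that unique ergodicity forces every forward and backward orbit to be dense (otherwise orbit-averaging would produce an invariant measure without full support), hence to re-enter a collar of $\Sigma$ infinitely often. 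You instead re-prove the Schwartzman criterion directly in the uniquely ergodic case: the uniform Birkhoff theorem (the standard characterization of unique ergodicity for continuous flows on compact spaces, applied to the continuous function $\alpha(X)$) converts the single inequality $\int_M \alpha(X)\,d\mu > 0$ --- which you correctly derive from $\int_M\alpha\wedge\omega^n>0$ via the decomposition $\alpha = \alpha(X)\lambda + \beta$ and the vanishing of $\beta\wedge\omega^n$ --- into the pointwise positivity of $\tau(X)$ for the flow-averaged form $\tau$, and the Tischler rational-approximation step then produces a genuine fibration $f\colon M \to S^1$ transverse to $X$. Your argument buys a stronger structural conclusion (an honest fibration over $S^1$ rather than an abstract cross section, with the infinitely-many-intersections property falling out of the uniform lower bound on $\tfrac{d}{dt}f(\phi_X^t(x))$ instead of a separate density argument), and it feeds even more directly into the mapping-torus statement of Proposition \ref{prop:actuallyAMappingTorus}; what it costs is the reliance on the uniform ergodic theorem for flows, which is classical but which the paper avoids by outsourcing the analytic content to \cite{Schwartzman57, Sullivan76}.
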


The proof of this proposition relies on the following result due to Schwartzman \cite{Schwartzman57}, although the specific statement we give is taken from Theorem $2.27$ in \cite{Sullivan76}.

To state this result, we need to define the \emph{homology class} associated to an invariant measure $\sigma$ of $X$. 

This is done using the notion of a \emph{current}. For the definition of a \emph{$m$-dimensional current} and a \emph{closed $m$-dimensional current}, see Definition \ref{defn:currents}. 

There is a natural map from the space of closed $m$-dimensional currents into $H_m(M; \mathbb{R})$ as follows. First, note that since closed $m$-dimensional currents vanish on exact $m$-forms, their restriction to closed $m$-forms descends to a linear map $H^m(M; \mathbb{R}) \to \mathbb{R}$ on the degree $m$ de Rham cohomology group.

Then we obtain a homology class by the isomorphism
$$H_m(M; \mathbb{R}) \simeq \text{Hom}(H^m(M; \mathbb{R}), \mathbb{R}).$$

Finally, let $\sigma$ be an invariant measure of $X$. We associate to it a closed, one-dimensional current $T$ by setting, for any one-form $\gamma$,
$$T(\gamma) = \sigma(\gamma(X)).$$

To verify that $T$ is closed, note that for any exact one-form $df$, 
$$T(df) = \sigma(df(X)) = 0. $$

The second equality follows from the fact that $\sigma$ is invariant. Now we can state the required result of \cite{Schwartzman57}. We use the statement from \cite[Theorem II$.27$]{Sullivan76}. 

\begin{prop} \label{prop:asymptoticCycles}
\cite{Schwartzman57, Sullivan76} Suppose every non-trivial invariant measure of $X$ has non-zero homology class in $H_1(M; \mathbb{R})$. Then $X$ has a global cross section.
\end{prop}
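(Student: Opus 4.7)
The plan is to prove the contrapositive via a Hahn-Banach duality argument. The key reduction is the classical fact that a nonsingular flow on a closed manifold has a global cross section whenever there exists a closed 1-form $\alpha$ on $M$ with $\alpha(X) > 0$ everywhere: such an $\alpha$ has a nontrivial de Rham class, and since rational/integer classes are dense in $H^1(M;\mathbb{R})$ one can replace $\alpha$ by a nearby closed form $\tilde{\alpha}$ with integer periods and still positive pairing against $X$ (using compactness of $M$ and openness of the positivity condition in the $C^0$-topology on forms). After a positive rescaling, $\tilde{\alpha}$ integrates to a smooth submersion $f: M \to S^1$; the uniform bound $\tilde{\alpha}(X) \geq \epsilon > 0$ (compactness of $M$) forces the lift of $f$ along each orbit to grow at rate at least $\epsilon$, so every orbit crosses every fiber of $f$. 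Any such fiber is the desired global cross section. The proposition thus reduces to producing such an $\alpha$.

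To produce $\alpha$, I would argue by contradiction. Fix closed 1-forms $\alpha_1, \ldots, \alpha_b$ whose classes form a basis of $H^1(M;\mathbb{R})$, and consider the linear subspace
\[
S = \bigl\{ c_1 \alpha_1(X) + \cdots + c_b \alpha_b(X) + \nabla_X g : c_i \in \mathbb{R}, \ g \in C^\infty(M) \bigr\} \subset C^0(M).
\]
Since every closed 1-form differs from a linear combination of the $\alpha_i$ by an exact form $dg$, and $dg(X) = \nabla_X g$, the subspace $S$ coincides with $\{\alpha(X) : \alpha \text{ closed 1-form on } M\}$. Let $P = \{f \in C^0(M) : f > 0 \text{ everywhere}\}$, an open convex cone in $C^0(M)$. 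The existence of a closed 1-form with $\alpha(X) > 0$ is then equivalent to $S \cap P \neq \emptyset$.

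Suppose for contradiction that $S \cap P = \emptyset$. The Hahn-Banach separation theorem applied to the linear subspace $S$ and the disjoint open convex cone $P$ yields a nonzero continuous linear functional $\mu \in C^0(M)^*$ with $\mu|_S \leq 0$ and $\mu(f) \geq 0$ for all $f \in P$. By continuity $\mu$ is nonnegative on all nonnegative continuous functions, so by the Riesz representation theorem $\mu$ is a nonzero positive Radon measure. Linearity of $S$ upgrades $\mu|_S \leq 0$ to $\mu|_S = 0$. Vanishing on $\{\nabla_X g : g \in C^\infty(M)\} \subset S$ gives $X$-invariance of $\mu$ via Lemma \ref{lem:linearizedInvariance}, while vanishing on each $\alpha_i(X)$ gives $T_\mu(\alpha_i) = \mu(\alpha_i(X)) = 0$. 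Since the $[\alpha_i]$ span $H^1(M;\mathbb{R})$ and the class $[T_\mu] \in H_1(M;\mathbb{R}) \cong H^1(M;\mathbb{R})^*$ is precisely the linear functional $[\beta] \mapsto T_\mu(\beta)$, we conclude $[T_\mu] = 0$, contradicting the hypothesis that every nonzero invariant measure has nontrivial homology class.

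The main obstacle is making the Hahn-Banach step fully rigorous: confirming that the separation theorem applies in this setting (it does, since $P$ is nonempty, open, and convex in the Banach space $C^0(M)$, and $S$ is a disjoint linear subspace, so one can separate by a closed hyperplane through the origin); verifying that the resulting functional is genuinely a positive Radon measure rather than only a signed functional (ensured by nonnegativity on $P$ extending by continuity to all nonnegative functions); and correctly identifying $[T_\mu]$ with an element of $H_1(M;\mathbb{R})$ via the universal coefficient isomorphism, as the paper already does in Lemma \ref{lem:propertiesOfInvariantT} and the discussion preceding it. Once these details are handled, both endpoints --- the contradiction from a closed current of vanishing homology class, and the construction of the cross section from $\tilde{\alpha}$ --- reduce to standard arguments.
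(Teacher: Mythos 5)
The paper does not prove Proposition \ref{prop:asymptoticCycles} at all; it is quoted from Schwartzman and Sullivan (Theorem II.27 of \cite{Sullivan76}), so there is no in-paper argument to compare against. Your proposal is a correct and essentially complete rendering of the standard Sullivan-style duality proof: the reduction to finding a closed $1$-form $\alpha$ with $\alpha(X)>0$ (via rational approximation of periods and the induced submersion to $S^1$) is sound, and the Hahn--Banach separation of the subspace $S=\{\alpha(X):\alpha\ \text{closed}\}$ from the open cone of positive functions correctly produces, in the contradiction case, a nonzero positive invariant Radon measure whose associated closed current annihilates a basis of $H^1(M;\mathbb{R})$ and hence is null-homologous. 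The only points needing a word of care are routine: the sign bookkeeping in the separation theorem (a functional bounded on one side of a linear subspace vanishes on it, and one then replaces $\mu$ by $-\mu$ if necessary), the extension of $\mu(\nabla_X g)=0$ from $g\in C^\infty$ to $g\in C^1$ by density before invoking Lemma \ref{lem:linearizedInvariance}, and normalizing $\mu$ to a probability measure before applying the hypothesis. None of these is a gap.
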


Now we can prove Proposition \ref{prop:globalSurfaceOfSection}. We first show that, if $X$ is uniquely ergodic, then we can apply Proposition \ref{prop:asymptoticCycles} to obtain a global cross section. Then, we argue that this global cross section is a global hypersurface of section, again using the assumption that $X$ is uniquely ergodic. 

\begin{proof}[Proof of Proposition \ref{prop:globalSurfaceOfSection}]

Suppose that $X$ is uniquely ergodic. Then the only non-trivial invariant measure is the volume measure $\lambda \wedge \omega^n$. 

It is easy to see that the associated current to this measure is the current $T$ given by 
$$T(\gamma) = \int_M \gamma \wedge \omega^n.$$

The homology class of $T$ is simply the Poincar\'e dual to $\omega^n$, which is non-zero by our assumption that $\omega^n$ is not exact.

It follows that every non-trivial invariant measure of $X$ has non-zero homology class, so $X$ has a global cross section $\Sigma$ by Proposition \ref{prop:asymptoticCycles}. 

We claim that any orbit of $X$ must intersect $\Sigma$ infinitely many times in both forward and backward time.

First, observe that since $\Sigma$ is transverse to the flow of $X$, there is an open collar neighborhood $U$ of $\Sigma$ satisfying the following property. First, we can decompose $U \setminus \Sigma$ into two connected components $U_+$ and $U_-$. Second, for any point $p \in U_+$, the forward orbit of $X$ starting at $p$ intersects $\Sigma$. Third, for any point $p \in U_-$, the backward orbit of $X$ starting at $p$ intersects $\Sigma$. 

Now since $X$ is uniquely ergodic, for any point $p \in M$, both the forward and backward orbits of $X$ starting at $p$ must be dense in $M$. If not, we can average over the forward or backward orbit of $X$ starting at $p$ to obtain an invariant measure of $X$ that is not fully supported on $M$. 

It follows that, for any point $p$, the forward orbit of $X$ starting at $p$ intersects the open set $U_+$, which by definition implies it intersects $\Sigma$. Similarly, the backward orbit of $X$ starting at $p$ intersects the open set $U_-$, which by definition implies it intersects $\Sigma$.

We have therefore shown that, if $X$ is uniquely ergodic, $\Sigma$ is a global hypersurface of section.
\end{proof}

Now Proposition \ref{prop:actuallyAMappingTorus} is an easy corollary of Proposition \ref{prop:globalSurfaceOfSection}. 

\begin{proof}[Proof of Proposition \ref{prop:actuallyAMappingTorus}]
Let $\Sigma$ be the global hypersurface of section of $X$ in $M$. Then there is a smooth function 
$$\mathcal{R}_1: \Sigma \to \mathbb{R}$$
called the \emph{first return time}, which we define as follows.

Denote the flow of $X$ by the one-parameter family of diffeomorphisms $\phi^t_X$. At any point $p \in \Sigma$, $\mathcal{R}_1(p)$ is defined to be the smallest $t > 0$ such that 
$$\phi_X^{t}(p) \in \Sigma.$$

The first return time defines a natural diffeomorphism 
$$\Phi: \Sigma \to \Sigma$$
by
$$\Phi(p) = \phi^{\mathcal{R}_1(p)}_X(p).$$

The diffeomorphism $\Phi$ is \emph{volume-preserving} in the following sense. Note that since $X$ is transverse to $\Sigma$, the form $\omega^n$  on $M$ restricts to a volume form on $\Sigma$. Moreover, since the flow of $X$ preserves $\omega^n$, it is immediate that $\Phi$ preserves $\omega^n$. 

Let $M_\Phi$ be the mapping torus of $\Phi$. We define a diffeomorphism
$$h: M_\Phi \to M$$
by
$$h(t, p) = \phi^{t\mathcal{R}_1(p)}_X(p).$$

It is clear by construction that the pushforward $h_*\partial_t$ at the point $h(t,p)$ is equal to $\mathcal{R}_1(p) \cdot X$, so the proof of Proposition \ref{prop:actuallyAMappingTorus} is complete. 
\end{proof}

\bibliographystyle{halpha}
\bibliography{main}

\end{document}